\newtheorem{prop}{Proposition}
\newtheorem{prob}{Problem}
\newtheorem{thm}[prop]{Theorem}
\newtheorem{lemma}[prop]{Lemma}
\newtheorem{cor}[prop]{Corollary}
\theoremstyle{remark}
\newtheorem{exa}[prop]{Example}
\newtheorem{rem}[prop]{Remark}
\newtheorem{defn}[prop]{Definition}
\numberwithin{prop}{section}
\numberwithin{equation}{section}
\newcommand{\df}{:=}
\newcommand{\fL}{\mathfrak{L}}
\newcommand{\fLb}{\fL^{(\beta)}}
\newcommand{\ofLb}{\overline{\fL}^{(\beta)}}
\newcommand{\Gb}{G^{(\beta)}}
\newcommand{\key}{\mathrm{key}}
\newcommand{\la}{\lambda}
\newcommand{\pab}{\partial^{(\beta)}}
\newcommand{\pib}{\pi^{(\beta)}}
\newcommand{\opib}{\overline{\pi}^{(\beta)}}
\newcommand{\SSYT}{\mathrm{SSYT}}
\newcommand{\SVT}{\mathrm{SVT}}
\newcommand{\word}{\mathrm{word}}
\newcommand{\wt}{\mathrm{wt}}
\newcommand{\ex}{\mathrm{ex}}
\newcommand{\Z}{\mathbb{Z}}
\newcommand{\F}{\mathcal{F}}
\newcommand{\cS}{\mathcal{S}}
\newcommand{\B}{\mathcal{B}}
\newcommand{\emt}{\mathbf{0}}
\definecolor{myred}{RGB}{240, 0, 0}
\definecolor{mypurple}{RGB}{120, 0, 120}
\definecolor{myblue}{RGB}{0, 0, 240}
\author{Tianyi Yu}
\address{Department of Mathematics\\
University of California, San Diego\\
La Jolla, CA, 92093, USA}
\email{tiy059@ucsd.edu}
\title{Set-valued tableaux rule 
for Lascoux polynomials}
\date{September 2021}
\begin{document}

\begin{abstract}
Lascoux polynomials generalize Grassmannian stable Grothendieck polynomials
and may be viewed as K-theoretic 
analogs of key polynomials.
The latter two polynomials have combinatorial formulas involving tableaux:
Lascoux and Sch\"{u}tzenberger gave a combinatorial formula for
key polynomials using right keys;
Buch gave a set-valued tableau formula for Grassmannian stable Grothendieck polynomials. 
We establish a novel combinatorial description for Lascoux polynomials
involving right keys and set-valued tableaux.
Our description generalizes the tableaux formulas of key polynomials 
and Grassmannian stable Grothendieck polynomials.
To prove our description, we construct a new abstract Kashiwara crystal structure 
on set-valued tableaux.
This construction answers an open problem of Monical, Pechenik and Scrimshaw.

\end{abstract}

\maketitle

\section{Introduction}
\label{S: Introduction}
In this paper, we establish a combinatorial rule for Lascoux polynomials
using a combinatorial proof.
Lascoux polynomials, 
denoted by $\fLb$, 
are a $\mathbb{Z}[\beta]-$basis for $\mathbb{Z}[\beta][x_1, x_2, \dots]$ 
indexed by weak compositions
(infinite sequence of non-negative integers with finitely many positive entries).
They are related to
the following polynomials: 
\begin{enumerate}
\item[$\bullet$] Schur polynomials: denoted by $s_\lambda$, which 
are symmetric polynomials in $\mathbb{Z}[x_1, x_2, \dots]$ indexed by partitions
(finite weakly decreasing sequence of positive integers).
They played an important role in representation theory 
of the symmetric group and the general linear group.
\item[$\bullet$] Key polynomials: denoted by $\kappa_\alpha$, which
are polynomials in $\mathbb{Z}[x_1, x_2, \dots]$ indexed by weak compositions.
They were introduced by Demazure in ~\cite{De} for Weyl groups and
are characters of Demazure modules. 
\item[$\bullet$] Grassmannian stable Grothendieck polynomials: 
denoted by $\Gb_\lambda$, which
are polynomials in $\mathbb{Z}[\beta][x_1, x_2, \dots]$ indexed by partitions.
Note that the $\beta$ is also an indeterminant. 
These polynomials are symmetric in the $x$ variables. 
They represent Schubert classes in the connective K-theory of Grassmannians.
\end{enumerate}

The relations between these four polynomials can be described as follows.
\begin{enumerate}
\item[$\bullet$]
Key polynomials generalize Schur polynomials.
More explicitly, assume $\alpha$ is a weak composition
whose first $n$ entries are weakly increasing
and all other entries are 0.
Let $\lambda$ be the partition we get
when we sort $\alpha$ into a weakly decreasing sequence
and remove the trailing 0s. 
Then 
$$
\kappa_\alpha = s_\lambda|_{x_{n+1} = x_{n+2} = \dots = 0}.
$$

\item[$\bullet$]
Grassmannian stable Grothendieck polynomials are K-theoretic analogs of Schur polynomials: 
$G_\lambda^{(0)} = s_\lambda$.
\item[$\bullet$]
Extending this viewpoint,
Lascoux polynomials may be viewed as K-theoretic analogs of key polynomials: 
$\fL_\lambda^{(0)} = \kappa_\lambda$.
\item[$\bullet$]
Lascoux polynomials generalize Grassmannian stable Grothendieck polynomials
in a manner analogous to the generalization
of Schur polynomials by key polynomials.
\end{enumerate}

Their relations are summarized in the following diagram:
\[
\begin{matrix}
\begin{tikzcd}
 \fLb_\alpha \arrow[rr,"\mathrm{specialize}"] \arrow[d,"\beta = 0"]\arrow[d,swap,""] && \Gb_\la \arrow[d,swap,"\beta = 0"] \arrow[d,""]\\
\kappa_\alpha \arrow[rr,swap,"\mathrm{specialize}"] && s_\lambda
\end{tikzcd} \\ \\
\end{matrix}
\]

Here is another perspective to see how Lascoux polynomials
fit into the larger picture.
Lascoux and Sch{\"u}tzenberger found an expansion 
of Schubert polynomials into key polynomials~\cite{LS2}.
This expansion was proved by Reiner and Shimozono ~\cite{RS}.
Grothendieck polynomials are K-theoretic analogs of Schubert polynomials~\cite{LS82}.
Buch, Kresch, Shimozono, Tamvakis and Yong ~\cite{BKSTY} 
proved the stable limit version of this expansion.
They expanded symmerized Grothendieck polynomials 
into Grassmannian stable Grothendieck polynomials.
Finally, Reiner and Yong ~\cite{RY} conjectured an expansion 
of Grothendieck polynomials into Lascoux polynomials, 
generalizing expansions in both ~\cite{RS} and ~\cite{BKSTY}.
Shimozono and Yu ~\cite{SY} proved this conjecture.

Polynomials in the diagram above have tableaux formulas.
Schur polynomials are generating functions
of semistandard Young tableaux (SSYT): 
\begin{equation}
\label{E: SSYT of Schur}
s_\lambda = \sum_T x^{\wt(T)}
\end{equation}
where the sum is over all SSYT with shape $\lambda$
(see \S\ref{S: Background} for relevant definitions).
Lascoux and Sch\"{u}tzenberger 
generalized Equation ~(\ref{E: SSYT of Schur})
by providing a combinatorial formula for key polynomials 
(Equation ~(\ref{E: SSYT rule of Demazure}))
using right keys.
On the other hand, Buch generalized Equation ~(\ref{E: SSYT of Schur})
by establishing a set-valued tableaux (SVT) formula 
for Grassmannian stable Grothendieck polynomials 
(Equation ~(\ref{E: Symmetric SVT rule})).
We generalize all three formulas 
by providing a novel combinatorial formula for Lascoux polynomials
involving both right keys and SVT. 

There already exist various combinatorial formulas of Lascoux polynomials:
\begin{enumerate}
\item [$\bullet$]
Buciumas, Scrimshaw and Weber
~\cite{BSW} established a SVT rule involving the right keys 
and the Lusztig involution, 
which was first conjectured by Pechenik and Scrimshaw ~\cite{PS}.
\item [$\bullet$]
Buciumas, Scrimshaw and Weber 
~\cite{BSW} established a set-valued skyline filling formula, 
which was first conjectured by Monical ~\cite{Mon}.
\item [$\bullet$]
Buciumas, Scrimshaw and Weber 
~\cite{BSW} established
reverse set-valued tableaux rule involving the left keys.
It was then rediscovered by Shimozono and Yu ~\cite{SY}.
This rule can also be rephrased into a form
that involves reverse semistandard Young tableaux.
\item[$\bullet$]
Ross and Yong ~\cite{RossY} conjecture a
rule that involves diagrams.
Their conjectural rule extends the Kohnert diagram
rule for key polynomials. 
In the special case where all positive numbers in
$\alpha$ are the same,
this conjecture is proved in ~\cite{PS}.
\end{enumerate}

We are going to provide another SVT rule 
for Lascoux polynomials (Theorem ~\ref{T: Main}). 
In general, 
our rule and the rule in ~\cite{BSW} sum over different sets of SVT.
Moreover, our rule is easier 
since it does not involve the Lusztig involution. 
In addition, we may view Theorem~\ref{T: Main} from the tableau complex viewpoint ~\cite{KMY}.
For each $\fLb_{\alpha}$, the SVT we summed over form a simplicial complex.
It is a sub-complex of the {\em Young tableau complex} in ~\cite{KMY}.

To prove our result, 
we defined operators $f_i, e_i$ on SVT
and obtain an abstract Kashiwara crystal structure.
Our operators generalize the classical crystal operators 
on SSYT.
However, our construction is not isomorphic 
to the crystal basis of a 
$U_q(\mathfrak{sl_n})-$representation.
In addition, we defined operators $f_i', e_i'$
which can be viewed as ``square roots'' of our $f_i$ and $e_i$.
Notice that Monical, Pechenik and Scrimshaw ~\cite{MPS}
have already defined a crystal structure on SVT,
which comes from a $U_q(\mathfrak{gl_n})-$representation.
However, their crystal operators are not compatible
with $K_+(\cdot)$ introduced in \S\ref{S: right key}.

Our proof mimics Kashiwara's study of Demazure modules 
and crystal basis ~\cite{K}.
Based on our crystal, we define $i$-strings 
similar to ~\cite{K}. 
A key step of our proof is
Corollary ~\ref{C: All, none, or source}, 
which is an analogous result of ~\cite[Proposition ~3.3.5]{K}.
Besides being crucial in the proof, 
our crystal structure is a K-theoretic analogue of the Demazure
crystal introduced in ~\cite{K}.
It can also be viewed as a solution 
to~\cite[Open Problem 7.1]{MPS}
in the context of abstract Kashiwara crystals.

The rest of the paper is organized as follows.
In \S\ref{S: Background}, we will give background.
In \S\ref{S: right key}, we define the right keys
for SVT and 
introduce our main result Theorem ~\ref{T: Main}. 
In \S\ref{S: crystal},
we construct a Kashiwara crystal on SVT
and prove Theorem ~\ref{T: Main}.
In \S\ref{S: K-crystal}, 
we explain why our crystal can be viewed as
a K-analogue of the Demazure
crystal and an answer to ~\cite[Open Problem 7.1]{MPS}.
In \S\ref{S: Atom}, 
we extend our main result
to Lascoux atoms. 

\section{Background}
\label{S: Background}
\subsection{Lascoux Polynomials}
The symmetric group $S_n$ acts on the polynomial ring
$\Z[\beta][x_1,x_2,\dotsc]$ 
by permuting the $x$ variables. 
Let $s_i \in S_n$ denote the transposition
that swaps $i$ and $i+1$.
Following~\cite{LS2} and~\cite{Las01}, 
we define four operators on $\Z[\beta][x_1,x_2,\dotsc]$:
\begin{align*}
\partial_i(f) &= (x_i-x_{i+1})^{-1} (f - s_if) \\
\pi_i(f) &= \partial_i (x_i f) \\
\pab_i(f) &= \partial_i (f + \beta x_{i+1} f) \\
\pib_i(f) &= \pab_i(x_if).
\end{align*}
These four operators satisfy the braid relations.

A weak composition is an infinite sequence of nonnegative integers 
with finitely many positive entries.
When we write a weak composition, 
we ignore the trailing 0s. 
Let $\alpha$ be a weak composition.
We use $\alpha_i$ to denote the $i^{th}$ entry of $\alpha$.
The {\em Lascoux polynomial} $\fLb_\alpha$
is defined by ~\cite{Las}
\begin{align}
\fLb_\alpha = \begin{cases}
x^\alpha & \text{if $\alpha$ is a partition} \\
\pib_i \fLb_{s_i\alpha} &\text{if $\alpha_i<\alpha_{i+1}$.}
\end{cases}
\end{align}
The {\em key polynomial} $\kappa_\alpha$ is defined by
\begin{align}
\kappa_\alpha = \fLb_{\alpha}|_{\beta=0}.
\end{align}

\subsection{Tableaux}
In this subsection, 
we define a {\em tableau} as a filling of a diagram $\lambda/\mu$
with $\mathbb{Z}_{>0}$.
A tableau has \emph{normal} (resp. \emph{antinormal}) shape 
if it is empty or has a unique northwestmost (resp. southeastmost) corner.
A {\em semistandard Young tableau} (SSYT)
is a tableau whose columns are strictly increasing and
rows are weakly increasing. 
Let $T$ be a SSYT.
The {\em weight} of $T$, denoted by $\wt(T)$, is a weak composition
whose $i^{th}$ entry is the number of $i$
in $T$.
The {\em column order} is a total order on cells of $T$.
It goes from left to right and
from bottom to top within each column. 
The column word of $T$, denoted by $\word(T)$,
is the word we get if we read the number in each cell of $T$
in the column order.

A {\em key} is a SSYT with normal shape 
such that each number in the $j^{th}$
column also appears in the $(j-1)^{th}$ column.
There are natural bijections
between weak compositions and keys. 
Let $\key(\cdot)$ be the map that sends the weak composition
to its corresponding key. 
Its inverse map is simply $\wt(\cdot)$.
For instance,
$$
\key(1,0,3,2) = 
\raisebox{0.3cm}{
\begin{ytableau}
1 & 3 & 3 \cr
3 & 4\cr
4 \cr 
\end{ytableau}}
$$

The Knuth equivalence $\sim$ is defined on the set of all words by
the transitive closure of 
$$
uxzyv \:\sim\: uzxyv \textrm{ if } x \leq y < z, 
$$
$$
uyxzv \:\sim\: uyzxv \textrm{ if } x < y \leq z,
$$
where $u$ and $v$ are words. 
From ~\cite{F}, for each SSYT $T$, 
there exists a unique SSYT $T^{\searrow}$
with antinormal shape
such that $\word(T) \sim \word(T^\searrow)$.

Each SSYT $T$ with normal shape is associated with a key 
called the {\em right key}.
It has the same shape as $T$ and is denoted by $K_+(T)$.
Let $T_{\geq j}$ be the tableau we get if we remove the first $j-1$ columns of $T$.
Then column $j$ of $K_+(T)$ is defined as 
the rightmost column of $T_{\geq j}^\searrow$.
In \S\ref{S: right key}, 
we will describe an easier way to compute $K_+(T)$.  

\begin{exa}
\label{X: right key def}
Let $T$ be the following SSYT:
\begin{align*}
\begin{ytableau}
1 & 2 & 4 & 7 \cr
3 & 5 & 6\cr
4 & 8\cr 
6  
\end{ytableau}
\end{align*}
Then $T_{\geq 1} = T$. 
Consider the following SSYT $T'$
with antinormal shape:
\begin{align*}
\begin{ytableau}
\none & \none & \none & 2 \cr
\none & \none & 3 & 4 \cr
\none & 1 & 5 & 7 \cr
4 & 6 & 6 & 8 
\end{ytableau}
\end{align*}
Notice that $\word(T) = 6431852647 \sim 4616538742 = \word(T')$,
so $T' = T^\searrow$.
Thus, column 1 of $K_+(T)$ consists of $\{ 2,4,7,8\}$.
Similarly, $T_{\geq 2}^\searrow, T_{\geq 3}^\searrow$ and $T_{\geq 4}^\searrow$ are 
\begin{align*}
\begin{ytableau}
\none & \none & 4 \cr
\none & 2 & 7 \cr
5 & 6 & 8 \cr
\end{ytableau}
\quad\quad\quad
\begin{ytableau}
\none & 4 \cr
6 & 7 \cr
\end{ytableau}
\quad\quad\quad
\begin{ytableau}
7\cr
\end{ytableau}
\end{align*}

Thus, $K_+(T)$ is 
\begin{align*}
\begin{ytableau}
2 & 4 & 4 & 7 \cr
4 & 7 & 7\cr
7 & 8\cr 
8 \cr 
\end{ytableau}
\end{align*}
\end{exa}

Finally, we can introduce a well-known combinatorial
rule of key polynomials ~\cite{LS1, LS2}.
Let $\alpha$ be a weak composition. 
Let $\SSYT(\alpha)$ be the set of all SSYT such that $T$ has the same shape
as $\key(\alpha)$ and
$K_+(T) \leq \key(\alpha)$ where the comparison is entry-wise.
Then
\begin{equation}
\label{E: SSYT rule of Demazure}
\kappa_{\alpha}= 
\sum_{T \in \SSYT(\alpha)} x^{\wt(T)}.
\end{equation}

\subsection{Abstract Kashiwara crystal}

First, we will introduce {\em Abstract Kashiwara} {\em crystals} ~\cite{Kas90}~\cite{Kas91} 
following~\cite{BS}.
\begin{defn} ~\cite[Definition 2.13]{BS}
An {\em abstract Kashiwara} $\textrm{GL}_n${\em-crystal} 
is a nonempty set $\B$
together with the following maps:
$$
e_i, f_i : \B \rightarrow \B \sqcup \{ \emt \},
$$
$$
\varepsilon_i, \varphi_i: \B \rightarrow \mathbb{Z}
\sqcup \{ - \infty\},
$$
$$
\wt : \B \rightarrow \mathbb{Z}^n,
$$
where $i \in [n-1]$, 
satisfying the following two conditions. 
\begin{enumerate}
\item[K1:] For all $X, Y \in \B$,
we have 
$e_i(X) = Y$ if and only if
$f_i(Y) = X$.
If this is the case then
\begin{equation*}
\begin{split}
\varepsilon_i(Y) & = \varepsilon_i(X) - 1, \\
\varphi_i(Y) & = \varphi_i(X) + 1, \\
\wt(Y) & = \wt(X) + v_i - v_{i+1},
\end{split}
\end{equation*}
where $v_1, \dots, v_n$ is the standard basis of $\mathbb{Z}^n$. 

\item[K2:] For all $X \in \B$,
we have
$$
\varphi_i(X) = \langle \wt(X), v_i - v_{i+1} \rangle + \varepsilon_i(X).
$$
\end{enumerate}

Furthermore, $\B$ is called {\em seminormal} if
$$
\varepsilon_i(X) = \max\{k: e_i^k(X) \neq \emt \}
\quad \textrm{ and }\quad 
\varphi_i(X) = \max\{k: f_i^k(X) \neq \emt \}
$$
for all $X \in \B$ and $i \in B_{n-1}$.
\end{defn}

\begin{defn}~\cite{K}
Let $\B$ be an abstract Kashiwara $\textrm{GL}_n$-crystal.
For each $i \in [n-1]$, 
an {\em $i$-string} is a sequence
$X_0, \dots, X_k \in \B$
satisfying:
\begin{enumerate}
\item[$\bullet$] $e_i(X_0) = f_i(X_k) = \emt$
\item[$\bullet$] $f_i(X_j) = X_{j+1}$
for each $j \in \{0, 1, \dots, k-1 \}$.
\end{enumerate}
We say $X_0$ is the {\em source} of its string.
Diagrammatically, we can represent the string as:

$$
X_0 \xrightarrow{i} 
X_1 \xrightarrow{i} X_2\xrightarrow{i}\cdots 
\xrightarrow{i} X_k
$$

\end{defn}

It is clear that $\B$ can be broken into
a disjoint union of $i$-strings for each $i$. 
If we know $\B$ is seminormal, 
then we have the following well-known
result regarding the weight of
elements in an $i$-string.
\begin{lemma}
\label{L: string weight}
Let $\B$ be a seminormal abstract 
Kashiwara $\textrm{GL}_n$-crystal.
Consider the $i$-string 
$X_0, \dots, $ $X_k$
for some $i \in [n-1]$. 
Then $\wt(X_j) = s_i \wt(X_{k-j})$
for each $j \in \{0, 1, \dots, k \}$,
where $s_i$ is the operator that
swaps the $i^{th}$ entry 
and the $(i+1)^{th}$ entry. 
\end{lemma}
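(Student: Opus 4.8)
The plan is to argue purely from the axioms K1, K2 together with the seminormality hypothesis, without using any tableau combinatorics. Fix $i \in [n-1]$ and the $i$-string $X_0, \dots, X_k$. The first step is to read off how the weight changes along the string: by K1, whenever $f_i(X_j) = X_{j+1}$ we have $\wt(X_{j+1}) = \wt(X_j) - v_i + v_{i+1}$. Iterating, $\wt(X_j) = \wt(X_0) - j(v_i - v_{i+1})$, so the only coordinates of the weight that move are the $i$th and $(i+1)$th, and they move in lockstep: if we write $\wt(X_0) = (\dots, a, b, \dots)$ in positions $i, i+1$, then $\wt(X_j) = (\dots, a-j, b+j, \dots)$.

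The second step is to pin down $k$ in terms of $a$ and $b$ using K2 and seminormality. At the source $X_0$ we have $e_i(X_0) = \emt$, so by seminormality $\varepsilon_i(X_0) = \max\{m : e_i^m(X_0) \neq \emt\} = 0$. Then K2 gives $\varphi_i(X_0) = \langle \wt(X_0), v_i - v_{i+1}\rangle + \varepsilon_i(X_0) = a - b$. On the other hand, $f_i(X_k) = \emt$ and $f_i(X_j) = X_{j+1}$ for $j < k$, so seminormality gives $\varphi_i(X_0) = \max\{m : f_i^m(X_0) \neq \emt\} = k$. Hence $k = a - b$. (Along the way one should note that this forces $a - b \ge 0$ and, more generally, $\varepsilon_i(X_j) = j$ and $\varphi_i(X_j) = k - j$, each by the same combination of K1's increment rules and seminormality; this is the ``well-known'' bookkeeping on a string.)

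The final step is to combine the two computations. We have $\wt(X_j)$ equal to $\wt(X_0)$ with the pair $(a, b)$ in positions $(i, i+1)$ replaced by $(a-j, b+j)$, and $\wt(X_{k-j})$ equal to $\wt(X_0)$ with that pair replaced by $(a-(k-j), b+(k-j)) = (b+j, a-j)$, using $k = a-b$. These two weights agree in every coordinate outside $\{i, i+1\}$, and in positions $(i, i+1)$ one is $(a-j, b+j)$ and the other is $(b+j, a-j)$ — exactly swapped. Therefore $\wt(X_j) = s_i \wt(X_{k-j})$, which is the claim.

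I do not expect a genuine obstacle here: the statement is the standard ``a string looks like an $\mathrm{sl}_2$-string symmetric about its center'' fact, and the only care needed is to invoke seminormality at both ends of the string (to get $\varepsilon_i(X_0) = 0$ and $\varphi_i(X_0) = k$) so that K2 can convert the length of the string into the weight drop $a - b$. The mildly fiddly point is simply keeping track that $f_i$ decreases the $i$th weight coordinate and increases the $(i+1)$th (and not the reverse), so that the substitution $k = a-b$ lands the two weights in swapped positions rather than in some other relation.
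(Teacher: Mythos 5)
Your proof is correct and follows essentially the same route as the paper's: use seminormality at the source to get $\varepsilon_i(X_0)=0$ and $\varphi_i(X_0)=k$, convert this via K2 into the statement that the $i$th and $(i+1)$th weight coordinates of $X_0$ differ by $k$, and then iterate K1 along the string to see that the coordinate pairs at positions $j$ and $k-j$ are swaps of one another. The only difference is notational (the paper sets $a$ to be the $(i+1)$th entry and writes $\wt(X_0)=(\dots,a+k,a,\dots)$ directly, where you derive $k=a-b$).
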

\begin{proof}
Since $\B$ is seminormal,
$\varepsilon_i(X_0) = 0$
and $\varphi_i(X_0) = k$.
Let $a$ be the $(j+1)^{th}$
entry of $\wt(X_0)$.
By (K2), $\wt(X_0) = (\dots, a + k, a, \dots)$.
Apply $f_i$ for $j$ times on $X_0$
and obtain $X_j$.
By (K1), $\wt(X_j) = 
(\dots,a + k - j, a + j, \dots)$.
Thus, $\wt(X_j) = s_i \wt(X_{k-j})$.
\end{proof}

Now we describe a well-known example of 
an abstract Kashiwara crystal.
Let $\B(\lambda,n)$ be the set of all SSYT whose shapes
are $\lambda$ and entries are in $[n]$.
Take $T \in \B(\lambda, n)$ and consider its column word.
We replace each $i$ by $``)''$
and replace each $i + 1$ by $``(''$.
Then we remove all other numbers. 
The resulting word is called the {\em $i$-word } of $T$.
We may pair $``(''$ with $``)''$ in the usual way.
\begin{defn}
Define $\epsilon_i(T)$ as the number of unpaired $``(''$
and $\phi_i(T)$ as the number of unpaired $``)''$.

If $\phi_i(T) = 0$, then $f_i(T) := \emt$.
Otherwise, we can find the $i$ in $T$ that corresponds
to the last unpaired $``)''$ in the $i$-word.
We change this $i$ into $i+1$ and get $f_i(T)$.

If $\epsilon_i(T) = 0$, then $e_i(T) := \emt$.
Otherwise, we can find the $i+1$ in $T$ that corresponds
to the first unpaired $``(''$ in the $i$-word.
We change this $i+1$ into $i$ and get $e_i(T)$.
\end{defn}

It is a well-known result that $B(n, \lambda)$, together
with $e_i, f_i, \phi_i, \epsilon_i$ and $wt$, 
form a seminormal abstract Kashiwara $\textrm{GL}_n$-crystal.
Moreover, they correspond to the crystal basis from the 
irreducible highest weight $U_q(\mathfrak{gl_n})$
module of highest weight $\lambda$.

We can use the operator $f_i$ to compute $\SSYT(\alpha)$.
Let $S$ be a subset of $B(n, \lambda)$.
Define $\F_i S$ as $\{ (f_i)^j (T): T \in S, j \geq 0\} - \{ \emt\}$.
\begin{thm}[\cite{K}]
\label{T: obtain SSYT by f_i}
Let $\alpha$ be a weak composition
such that $\alpha^+ = \lambda$
and $\alpha_i > 0$ for $i > n$.
We can write $\alpha$ as 
$s_{i_1}\dots s_{i_k} \lambda$,
where $k$ is minimized.
Then we have
\begin{equation*}
\SSYT(\alpha)
= \F_{i_1} \dots \F_{i_k} \{ u_\lambda\}.
\end{equation*}
Here, $u_\lambda$ is the SSYT with shape $\lambda$
such that its $r^{th}$ row only has $r$.
\end{thm}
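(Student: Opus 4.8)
The statement to prove is Theorem~\ref{T: obtain SSYT by f_i}, which characterizes $\SSYT(\alpha)$ as an iterated Demazure-operator orbit $\F_{i_1}\cdots\F_{i_k}\{u_\lambda\}$ of the highest-weight SSYT. I would prove this by induction on $k$, the length of a reduced word $s_{i_1}\cdots s_{i_k}$ expressing $\alpha$ in terms of $\lambda = \alpha^+$. The base case $k=0$ is $\alpha = \lambda$ itself: here $\SSYT(\lambda)$ should consist of exactly those SSYT $T$ of shape $\lambda$ with $K_+(T) \le \key(\lambda)$, and since $\key(\lambda)$ is the SSYT whose $r$th row is all $r$'s (as $\lambda$ is a partition), and the right key always dominates this minimal filling entrywise, the only such $T$ is $u_\lambda$ — so $\SSYT(\lambda) = \{u_\lambda\}$, matching $\F_{\emptyset}\{u_\lambda\}$.

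**Inductive step.** Assume the result for weak compositions requiring reduced words of length $k-1$, and write $\alpha = s_{i_1}\alpha'$ where $\alpha' = s_{i_2}\cdots s_{i_k}\lambda$ has a reduced word of length $k-1$ and $\alpha'_{i_1} < \alpha'_{i_1 + 1}$ (this last inequality is exactly what makes the word reduced and lets us apply $\pib_{i_1}$). By induction $\SSYT(\alpha') = \F_{i_2}\cdots\F_{i_k}\{u_\lambda\}$, so it suffices to show $\SSYT(\alpha) = \F_{i_1}\,\SSYT(\alpha')$. The heart of the matter is thus a single Demazure step: \emph{for weak compositions $\alpha,\alpha'$ of the same shape $\lambda$ with $\alpha = s_i\alpha'$ and $\alpha'_i < \alpha'_{i+1}$, one has $\SSYT(\alpha) = \F_i\,\SSYT(\alpha')$.} This is where the crystal structure on $\B(\lambda,n)$ does the work. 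The key ingredient is the compatibility of the right-key map $K_+$ with the crystal operator $f_i$: applying $f_i$ to an SSYT changes exactly one entry $i \mapsto i+1$, and one must track how this propagates to $K_+$. Concretely, I would establish a lemma of the form: if $f_i(T) \neq \emt$, then $K_+(f_i(T))$ agrees with $K_+(T)$ outside column structure controlled by the $i$-word, and more precisely the condition ``$K_+(T) \le \key(\alpha')$'' transforms under $\F_i$ into ``$K_+ \le \key(s_i\alpha')$'' — using that $\key(s_i\alpha')$ and $\key(\alpha')$ differ only by swapping the bound from $i$ to $i+1$ in the relevant rows. This is morally Kashiwara's \cite[Proposition 3.3.5]{K} transported to tableaux: an $i$-string either lies entirely in $\SSYT(\alpha)$, entirely outside it, or meets it in an initial segment from the source, and $\F_i$ saturates such segments.

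**The main obstacle.** The delicate point — and where I expect to spend the most effort — is proving that $\F_i\,\SSYT(\alpha') \subseteq \SSYT(\alpha)$ and the reverse inclusion simultaneously, i.e. that $\SSYT(\alpha')$ is a union of \emph{initial} segments of $i$-strings (segments containing the source), so that closing under $f_i$ neither overshoots (landing outside $\SSYT(\alpha)$) nor undershoots. For the forward inclusion I must show $K_+(T) \le \key(\alpha')$ with $\varepsilon_i(T) = 0$ forces every $f_i^j(T)$ to satisfy $K_+(f_i^j(T)) \le \key(s_i\alpha')$; this uses Lemma~\ref{L: string weight} to control weights along the string together with the right-key/crystal compatibility. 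For the reverse, given $T$ with $K_+(T) \le \key(\alpha)$, I need to recover it as $f_i^j$ applied to something in $\SSYT(\alpha')$, which amounts to showing $e_i^{\varepsilon_i(T)}(T)$ — the source of $T$'s $i$-string — lies in $\SSYT(\alpha')$. The technical crux is thus a precise description of how $K_+$ interacts with moving along an $i$-string, and verifying the bound $\key(\alpha)$ versus $\key(\alpha')$ matches up under the swap $\alpha = s_i \alpha'$; once that compatibility lemma is in hand, the theorem follows by assembling the single steps along the reduced word, with the reducedness hypothesis ($k$ minimal) ensuring each intermediate $\alpha^{(j)}$ genuinely has the ascent $\alpha^{(j)}_{i_j} < \alpha^{(j)}_{i_j+1}$ needed to invoke the step.
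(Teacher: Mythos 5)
Your overall strategy is the right one, and it is the same induction-on-reduced-word scheme the paper uses for the K-theoretic analogue (Theorem \ref{T: obtain SVT by f_i'}); note the paper itself does not prove the present statement but cites it from Kashiwara. Still, there are two problems. First, a direction error in the setup of the inductive step: with $\alpha = s_{i_1}\alpha'$ and the word reduced, the correct hypothesis is $\alpha'_{i_1} > \alpha'_{i_1+1}$ (equivalently $\alpha_{i_1} < \alpha_{i_1+1}$, which is exactly the condition under which $\fLb_\alpha = \pib_{i_1}\fLb_{\alpha'}$); you have written the inequality the other way. The direction matters, since it determines which of $\SSYT(\alpha)$ and $\SSYT(\alpha')$ is the union of sources and which is the union of full $i$-strings.

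Second, and more seriously, the entire mathematical content of the theorem is the single-step claim that you label ``the main obstacle'' and then do not prove. You correctly identify what is needed: (i) if $T$ is the source of its $i$-string and $K_+(T) = \key(\gamma)$, then $\gamma_i \ge \gamma_{i+1}$; (ii) along an $i$-string the right key is constant except possibly when leaving the source, where it may change from $\key(\gamma)$ to $\key(s_i\gamma)$; and (iii) the resulting trichotomy that $\SSYT(\alpha)$ meets each $i$-string in everything, nothing, or the source alone. But the proposal contains no argument for any of these --- only the statement that a lemma ``of the form'' would be established. In the paper's SVT setting the corresponding facts are Lemmas \ref{L: f' and right key}, \ref{L: e' and right key} and \ref{L: double i-string and right key}, Corollary \ref{C: All, none, or source}, and Lemma \ref{L: action s_i on crystal1}, whose proofs (via the $\star$ operator and a case analysis of the $i$-word) occupy most of \S\ref{S: crystal}. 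Without an actual proof of this key/crystal compatibility, what you have is a correct roadmap rather than a proof. (Your base case is fine: $K_+(T) \ge T$ entrywise and column-strictness force $T = u_\lambda$ whenever $K_+(T) \le \key(\lambda)$.)
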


$\SSYT(\alpha)$, together with the maps,
is known as a {\em Demazure crystal}.

\subsection{Set-valued Tableaux}
We start to view a {\em tableau} as a filling
where entries are finite non-empty subsets of $\Z_{>0}$.
\begin{defn}
A {\em set-valued tableau} (SVT) is a tableau
such that no matter how we pick one entry in each set,
the resulting tableau is a SSYT. 
Let $T$ be a SVT. 
Define $\cS(T)$ to be the set of SSYT
obtained by picking one number in each cell of~$T$.
\end{defn}
\begin{exa}
\label{E: SVT}
The following $T$ is a SVT:

\begin{align*}
T = \raisebox{0.3cm}{
\begin{ytableau}
1 & 13 & 36 \cr
23 & 47\cr
567 \cr 
\end{ytableau}} \: ,
\end{align*}
where $23$ represents the set $\{2,3\}$.
The set $\cS(T)$ consists of 48 SSYT, 
including \begin{align*}
\raisebox{0.3cm}{
\begin{ytableau}
1 & 3 & 6 \cr
3 & 7\cr
7 \cr 
\end{ytableau}} 
\quad \textrm{ and } \quad
\raisebox{0.3cm}{
\begin{ytableau}
1 & 3 & 6 \cr
2 & 4\cr
5 \cr 
\end{ytableau}} \: .
\end{align*}

The following example is not a SVT
\begin{align*}
\begin{ytableau}
1 & 14 & 46 \cr
23 & 47\cr
567 \cr 
\end{ytableau}
\end{align*}
since if we pick 4 in
both cells of column 2, 
the resulting filling cannot be a SSYT.
\end{exa}

\begin{rem}
A SSYT can be viewed as a
SVT where each set is a singleton.
\end{rem}

\begin{defn}
Let $T$ be a SVT of shape $\lambda$.
Let $\wt(T)$ be the weak composition whose
$i^{th}$ entry is the number of 
$i$'s in $T$.
Let $\ex(T)$ be the number 
$|\wt(T)| - |\lambda|$.
\end{defn}

It is clear that the definition of $\wt(\cdot)$
agrees with our previous definition
when every set in $T$ is a singleton.
Intuitively, $\ex(T)$ counts the number of ``extra" 
numbers in $T$.

To supplement our introduction, 
before continuing our development, 
we state an application of SVT.
This is a 
SVT rule for
Grassmannian stable Grothendieck polynomials $\Gb_\lambda$ due to Buch~\cite{B:Gr}.
Instead of defining $\Gb_\lambda$, 
we restate its relation with Lascoux polynomials.
Assume $\alpha$ is a weak composition whose
first $n$ entries are weakly increasing
and all other entries are all 0.
Sort $\alpha$ and obtain the partition $\lambda$.
Then 
$$
\fLb_\alpha = \Gb_\lambda|_{x_{n+1} = x_{n+2} = \dots = 0}.
$$
\begin{thm}[\cite{B:Gr}]
Let $\lambda$ be a partition. Then
\begin{equation}
\label{E: Symmetric SVT rule}
\Gb_\lambda|_{x_{n+1} = x_{n+2} = \dots = 0}= \sum_T \beta^{\ex(T)}x^{\wt(T)} 
\end{equation}
where the sum is over all SVT $T$
whose shape is $\lambda$ and whose
entries are subsets of ~$[n]$.
\end{thm}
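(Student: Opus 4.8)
The plan is to prove the identity by showing that the right-hand side, which I abbreviate $F^{(n)}_\la := \sum_T \beta^{\ex(T)} x^{\wt(T)}$ (sum over $\SVT$ $T$ of shape $\la$ with entries in $[n]$), is symmetric in $x_1,\dots,x_n$ and stable in $n$, and then matching it with $\Gb_\la$ in the stable limit. The key simplification is that $\ex$ is automatically preserved by any shape-preserving bijection that merely rearranges weights: if a bijection sends a $\SVT$ of shape $\la$ to one of shape $\la$ whose weight is $\wt(T)$ with its $i$-th and $(i+1)$-th entries swapped, then $|\wt|$ and the shape are unchanged, so $\ex$ is unchanged. Therefore proving symmetry of $F^{(n)}_\la$ reduces entirely to producing, for each $i \in [n-1]$, a \emph{set-valued Bender--Knuth involution}: a bijection on the $\SVT$ of shape $\la$ with entries in $[n]$ that interchanges the number of $i$'s with the number of $(i+1)$'s.

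Step 1 (symmetry, the main work). Fix $i$. In each row of a $\SVT$ the cells whose entry-set meets $\{i,i+1\}$ form a consecutive run of rigid shape: at most one non-singleton cell with maximum $i$, then a block of cells equal to $\{i\}$, then at most one ``straddling'' cell containing both $i$ and $i+1$, then a block of cells equal to $\{i+1\}$, then at most one non-singleton cell with minimum $i+1$. The classical Bender--Knuth rule, applied column-pair by column-pair, exchanges the numbers of the plain singletons $\{i\}$ and $\{i+1\}$; the non-singleton cells must be treated as a separate bookkeeping problem, since a value $i$ sitting in a cell such as $\{i-1,i\}$ is constrained by its cellmates and by its neighbours. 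Organizing these constraints so that the move still outputs a valid $\SVT$ and still swaps the total $i$- and $(i+1)$-counts is the delicate point, and I expect it to be the main obstacle; granting it, $F^{(n)}_\la$ is invariant under every simple transposition, hence symmetric.

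Step 2 (stability and matching). Substituting $x_{n+1} = x_{n+2} = \cdots = 0$ into $F^{(n+1)}_\la$ annihilates every $\SVT$ with an entry exceeding $n$ and leaves precisely the $\SVT$ with entries in $[n]$, so $F^{(n+1)}_\la|_{x_{n+1}=\cdots=0} = F^{(n)}_\la$; the same stability holds for $\Gb_\la|_{x_{n+1}=\cdots=0}$. Hence, using Step 1, the $F^{(n)}_\la$ are consistent restrictions of a single symmetric function $\widehat F_\la$, and it suffices to prove $\widehat F_\la = \Gb_\la$ as symmetric functions. According to which definition of $\Gb_\la$ one starts from, this is either immediate (if $\Gb_\la$ is defined by its set-valued tableau generating function) or is exactly Buch's verification that this generating function obeys the Pieri/Littlewood--Richardson recursion for the connective $K$-theory of Grassmannians; I would take that verification from \cite{B:Gr}, since it is independent of the combinatorics of this paper.

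Two checks I would record at the end. Setting $\beta = 0$ leaves only the $\SVT$ with $\ex(T) = 0$, i.e.\ the $\SSYT$, so $F^{(n)}_\la|_{\beta=0} = s_\la|_{x_{n+1}=\cdots=0}$, matching $G_\la^{(0)} = s_\la$; and the coefficient of $x^\la$ is $1$, from the unique $\SVT$ of shape $\la$ and weight $\la$ (every cell a singleton, row $r$ filled with $r$), matching the leading term of $\Gb_\la$. Finally I would note that within this paper the statement also arises as the case of Theorem~\ref{T: Main} in which $\alpha$ is weakly increasing with support in $[n]$, so that the right-key condition becomes vacuous; but since the present theorem precedes, and may feed, the crystal construction of \S\ref{S: crystal}, the self-contained argument above is the one to give.
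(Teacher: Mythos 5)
This statement is not proved in the paper at all: it is quoted from \cite{B:Gr}, and indeed the paper deliberately avoids even defining $\Gb_\la$, characterizing $\Gb_\la|_{x_{n+1}=\cdots=0}$ only through its relation to the Lascoux polynomial $\fLb_\alpha$ for weakly increasing $\alpha$. So there is no in-paper argument to match yours against; the relevant comparison is whether your proposal stands on its own, and as written it does not. The core of Step 1 --- the set-valued Bender--Knuth involution --- is precisely where all the work lies, and you explicitly defer it (``granting it''). Your analysis of a single row is fine (at most one cell per row can contain both $i$ and $i+1$), but the classical Bender--Knuth move is governed by the \emph{column} constraints (which occurrences of $i$ and $i{+}1$ are ``free''), and with set-valued entries a cell such as $\{i-1,i\}$ or $\{i,i+1\}$ interacts with both its column neighbours and its cellmates in ways you have not resolved. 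Such an involution does exist in the literature, but asserting its existence is not a proof, and nothing in your sketch pins down why the modified move preserves the SVT condition or exchanges the $i$- and $(i{+}1)$-multiplicities globally.

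Step 2 has a circularity problem. You concede that the identification of the symmetric function $\widehat F_\la$ with $\Gb_\la$ is ``either immediate'' (if one defines $\Gb_\la$ by this very generating function) ``or is exactly Buch's verification,'' to be imported from \cite{B:Gr}. In the first case the theorem is a tautology and your Step 1 is unnecessary; in the second case you are citing the source for the substantive content of the theorem you set out to prove, so the proposal reduces to re-deriving the symmetry (incompletely) while outsourcing the rest. Your closing observation is the one that actually fits this paper's logic: once Theorem~\ref{T: Main} is established, equation~(\ref{E: Symmetric SVT rule}) follows as the special case where $\alpha$ is weakly increasing with support in $[n]$, since then $K_+(T)\leq \key(\alpha)$ holds for every SVT of shape $\la$ with entries in $[n]$. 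If you want a self-contained treatment within this paper, that derivation (which the paper itself spells out right after Theorem~\ref{T: Main}) is the route to take; otherwise the honest move is simply to cite \cite{B:Gr}, as the paper does.
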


\section{The right keys}
\label{S: right key}

In this section, 
we first describe a direct way to compute $K_+(T)$
for SSYT $T$ with normal shape.
Then we generalize the right key to all SVT with normal shape.
Finally, we introduce our main result. 

\subsection{Compute right keys using the star operator}
Shimozono and Yu ~\cite{SY} used the following operator
to compute right keys.\footnote{Notice that we replace ``smallest'' by ``largest''
and ``at least'' by ``at most''.
This is because ~\cite{SY} focused on reverse SSYT
(tableaux whose rows are weakly decreasing and columns
are strictly decreasing)
while this paper focused on SSYT.} 
This method is a reformulation of Willis' method ~\cite{W}.
\begin{defn} 
First, we define $S \star m$ for
$S \subseteq \mathbb{Z}$
and $m\in \Z$. 
Let $m'$ be the largest number in $S$ such that 
$m' \leq m$.
If $m'$ does not exist, 
we let $S \star m = S \sqcup \{m \}$.
Otherwise,
we define $S \star m = (S - \{ m'\})
\sqcup \{ m \}$.

More generally, we may define
$\star$ to be a right action 
of the free monoid of words with characters in the 
set $\Z$, 
on the power set of $\Z$.
If $w = w_1 \cdots w_n$ is a word of integers, 
we define $S \star w =
(\cdots((S \star w_1) \star w_2) \cdots \star w_n)$,
and $S \star w = S$ 
if $w$ is the empty word.
\end{defn}
\begin{exa}
We have
$$
\{2,4,5,7\} \star 3462 = \{2,3,4,6,7\},
$$
$$
\{2,4,5,7\} \star 1284 = \{1,2,4,5,8\}.
$$
\end{exa}

\begin{rem} 
\label{R: Knuth and star}
Similar to ~\cite[Remark 4.7]{SY},
we have $S \star w = S \star w'$,
if $w$ and $w'$ are Knuth equivalent.
\end{rem}

We have the following way to compute a right key. 
\begin{lemma}
\label{L: star computes key}
Column $j$ of $K_+(T)$ consists of
$\emptyset \star \word(T_{\geq j})$. 
\end{lemma}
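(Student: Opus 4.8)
The plan is to show that the column-$j$ description of $K_+(T)$ given by the $\star$-operator agrees with the original definition via $T_{\geq j}^\searrow$. By definition, column $j$ of $K_+(T)$ is the rightmost column of $T_{\geq j}^\searrow$, the unique antinormal-shape SSYT Knuth-equivalent to $T_{\geq j}$. So it suffices to prove the following single statement: for any SSYT $U$ with antinormal shape, the rightmost column of $U$ equals $\emptyset \star \word(U)$. Once this is established, applying it to $U = T_{\geq j}^\searrow$ and using Remark~\ref{R: Knuth and star} (which says $\star$ is a Knuth invariant, so $\emptyset \star \word(T_{\geq j}^\searrow) = \emptyset \star \word(T_{\geq j})$) finishes the proof.

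To prove the reduced claim, I would induct on the number of cells of the antinormal SSYT $U$, reading its column word $\word(U)$ letter by letter. The column word of an antinormal-shape tableau reads bottom-to-top within columns, left to right across columns; in particular the rightmost column is read last, and its entries are read from bottom (largest) to top (smallest) — wait, in a column of a SSYT entries increase going down, so reading bottom-to-top reads them in decreasing order. The key observation is a ``stability'' phenomenon: after processing all letters coming from columns strictly to the left of the last column, the set $\emptyset \star (\text{prefix})$ already equals the set of entries in the last column, and then processing the last column's letters (which are exactly the entries of that column, each $w_i$ satisfying $w_i \in S$ at the moment it is read because the column is already present in $S$) leaves $S$ unchanged, since $S \star m = S$ whenever $m \in S$ and $m$ is the largest element of $S$ that is $\le m$. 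The real content is therefore: \emph{why does $\emptyset \star \word(U_{<\text{last col}})$ equal the last column of $U$?} Here I would use that the antinormal shape forces strong constraints — each column is a ``suffix'' of the one to its left in the sense that removing the last column of an antinormal SSYT yields another antinormal SSYT whose last column, interleaved appropriately with the removed column, reconstructs the original; combined with the elementary fact that $\star m$ is an involution-like ``bump'' that replaces the largest element $\le m$ by $m$, one shows by column-by-column induction that $\emptyset \star (\text{first } c \text{ columns})$ equals column $c$.

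Concretely, the inductive step is: let $U$ be antinormal with columns $C_1, \dots, C_c$ (left to right), let $U'$ be $U$ with $C_c$ deleted. By induction $\emptyset \star \word(U') = C_{c-1}$. I must show $C_{c-1} \star \word(C_c) = C_c$, where $\word(C_c)$ lists the entries of $C_c$ from largest to smallest. Since $U$ is a SSYT of antinormal shape, $C_c$ sits in the bottom rows and $C_{c-1} \supseteq$... more precisely, the SSYT column-strictness plus the row-weak-increase in the antinormal shape give that $C_c$ and $C_{c-1}$ have the same bottommost entry's row behavior in a way that makes $C_{c-1} \star (\text{decreasing list of } C_c)$ telescope to $C_c$. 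I expect \textbf{this last telescoping identity for two adjacent columns of an antinormal SSYT to be the main obstacle} — it requires carefully tracking which element of $C_{c-1}$ gets bumped by each entry of $C_c$ and verifying the bumped elements are exactly $C_{c-1} \setminus C_c$ while the inserted elements are exactly $C_c \setminus C_{c-1}$. An alternative to the direct combinatorial argument is to invoke Willis' method~\cite{W} or~\cite[Remark 4.7]{SY} more heavily, deriving the lemma as a reformulation rather than re-deriving the telescoping by hand; I would fall back on that if the two-column identity proves unwieldy.
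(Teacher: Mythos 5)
Your reduction is exactly the paper's: column $j$ of $K_+(T)$ is by definition the last column of $T_{\geq j}^{\searrow}$, the $\star$-action of the word of an antinormal SSYT on $\emptyset$ produces its last column, and Remark~\ref{R: Knuth and star} transports this back to $\word(T_{\geq j})$. (The paper simply asserts the antinormal-shape step without proof, so your attempt to justify it goes beyond what is written there.) One correction to your narrative: the ``stability'' paragraph is false as stated --- after processing the columns strictly to the left of the last one, the running set equals the \emph{second-to-last} column, not the last, and the letters of the last column do change the set. In Example~\ref{X: right key def}, the prefix $461653$ of $\word(T^{\searrow})$ yields $\{3,5,6\}$ (column $3$), and the suffix $8742$ then bumps this to $\{2,4,7,8\}$ (column $4$). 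Your subsequent ``Concretely'' paragraph supersedes this with the correct induction, and the two-column identity $C_{c-1}\star\word(C_c)=C_c$ that you flag as the main obstacle is in fact routine: writing $C_{c-1}=\{a_1<\dots<a_m\}$ and $C_c=\{b_1<\dots<b_{m'}\}$ with both columns bottom-justified, row-weak-increase gives $a_i\le b_{m'-m+i}$, so the strictly decreasing word $b_{m'}b_{m'-1}\cdots b_1$ first bumps $a_m,a_{m-1},\dots,a_1$ in turn (when $b_{m'-m+i}$ acts, every $b$ already in the set exceeds it, so the largest element $\le b_{m'-m+i}$ is $a_i$) and then inserts the remaining, smaller $b$'s as new elements, leaving exactly $C_c$. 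With that identity supplied, your proof is complete and matches the paper's.
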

\begin{proof}
By definition, column $j$ of $K_+(T_{\geq j})$
equals the last column of $T_{\geq j}^\searrow$.
Since $T_{\geq j}^\searrow$ has antinormal shape, 
$\emptyset \star \word(T_{\geq j}^\searrow)$
is the set of numbers 
in the last column of $T_{\geq j}^\searrow$.
Then the proof is finished by $\word(T_{\geq j}) \sim \word(T_{\geq j}^\searrow)$ 
and Remark ~\ref{R: Knuth and star}.
\end{proof}

\begin{exa}
Let $T$ be the following SSYT:
\begin{align*}
\begin{ytableau}
1 & 2 & 4 & 7 \cr
3 & 5 & 6\cr
4 & 8\cr 
6 \cr 
\end{ytableau}
\end{align*}
Then column 1 of $K_+(T)$ consists of
$\emptyset \star 6431852647 = \{2,4,7,8\}$.
Column 2, 3 and 4 of $K_+(T)$ consist of:
$\emptyset \star 852647 = \{4,7,8\}$,
$\emptyset \star 647 = \{4,7\}$ and
$\emptyset \star 7 = \{7\}$.
Thus, $K_+(T)$ is
\begin{align*}
\begin{ytableau}
2 & 4 & 4 & 7 \cr
4 & 7 & 7\cr
7 & 8\cr 
8 \cr 
\end{ytableau}
\end{align*}
which agrees with Example ~\ref{X: right key def}.
\end{exa}

\subsection{Generalizing \boldmath{$K_+(\cdot)$} to SVT}

In this subsection, 
we assign a SSYT to each SVT with normal shape.
Then we explains why this assignment 
naturally generalizes $K_+(\cdot)$.

\begin{defn}
Let $T$ be a SVT with normal shape.
Define 
$$T_{\max} \df 
\max_{P \in \cS(T)}(K_+(P))$$
where $\max$ is entry-wise.
\end{defn}

\begin{exa}
We start with the SVT $T$.
The set $\cS(T)$ has two SSYT.

$$
T = 
\raisebox{0.1cm}{
\begin{ytableau}
1 & 23\\
3 
\end{ytableau}}
\:,\quad
\cS(T) = \{
\raisebox{0.1cm}{
\begin{ytableau}
1 & 2\\
3 
\end{ytableau}}
\:, \quad
\raisebox{0.1cm}{
\begin{ytableau}
1 & 3\\
3 
\end{ytableau}} \: 
\} \: .
$$
We compute the right keys
of the two tableaux in $\cS(T)$ 
and get:
$$
\begin{ytableau}
2 & 2\\
3 
\end{ytableau}
\quad \textrm{ and } \quad
\begin{ytableau}
1 & 3\\
3 
\end{ytableau} \: .
$$
Take the maximum of each entry
and obtain:
$$
T_{\max} = 
\begin{ytableau}
2 & 3\\
3 
\end{ytableau} \: .
$$
\end{exa}

\begin{rem}
Readers might wonder whether 
$T_{\max}$ can be computed as follows:
Pick the largest number in each entry
and compute the right key of this 
SSYT.
The previous example shows that 
this approach does not work. 
If we pick the largest number in each
entry,
we obtain 
$$
\begin{ytableau}
1 & 3\\
3 
\end{ytableau}
$$
whose right key is not $T_{\max}$.
\end{rem}

From the definition of $T_{\max}$,
it is an entry-wise maximum of several SSYT.
Thus, $T_{\max}$ is also a SSYT.
Next, we find an easier way
to compute $T_{\max}$ and show it is a key.
We start with a definition.

\begin{defn}
For finite $S \subseteq \mathbb{Z}_{>0}$,
let $\word(S)$ be the word we get
if we list numbers of $S$ in increasing order.
For a SVT $T$, 
let $\word(T) := \word(S_1) \cdots \word(S_n)$,
where $S_1, \dots, S_n$ are entries of $T$ 
in the column order.
\end{defn}

Now we may introduce an easier way to compute $T_{\max}$:
\begin{lemma}
\label{L: T_max}
Let $T$ be a normal SVT. 
Column $j$ of $T_{\max}$ consists of
$\emptyset \star \word(T_{\geq j})$,
where $T_{\geq j}$ is obtained by
removing the first $j-1$ columns of $T$.
\end{lemma}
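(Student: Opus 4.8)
The plan is to reduce the statement about the entry-wise maximum $T_{\max}$ of right keys of elements of $\cS(T)$ to the corresponding statement for a single SSYT, namely Lemma~\ref{L: star computes key}, together with a monotonicity property of the $\star$ operator. First I would observe that, by Lemma~\ref{L: star computes key}, for each $P \in \cS(T)$ the $j^{th}$ column of $K_+(P)$ is the set $\emptyset \star \word(P_{\geq j})$. Since $T_{\max}$ is defined as the entry-wise maximum over $P \in \cS(T)$, the $j^{th}$ column of $T_{\max}$ is the entry-wise maximum, over all $P \in \cS(T)$, of the sets $\emptyset \star \word(P_{\geq j})$ (here ``entry-wise maximum'' of a collection of sets of the same cardinality $k$ means: sort each set increasingly and take the coordinate-wise max, which is again a valid strictly increasing column because each $K_+(P)$ has the same shape $\lambda$). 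So it suffices to prove the purely combinatorial identity
\begin{equation*}
\max_{P \in \cS(T)} \bigl(\emptyset \star \word(P_{\geq j})\bigr) = \emptyset \star \word(T_{\geq j}),
\end{equation*}
where the right-hand side uses the SVT column word $\word(T_{\geq j}) = \word(S_1)\cdots\word(S_m)$ with $S_1, \dots, S_m$ the entries of $T_{\geq j}$ in column order.

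The heart of the argument is thus a lemma about $\star$: if a word $w$ is obtained from a word $w'$ by, in each ``block'' corresponding to a cell, choosing one letter versus listing all letters of the cell increasingly, how do the resulting sets $\emptyset \star w$ compare? Concretely I would prove two things. First, a \emph{monotonicity} statement: if $v$ is a word and $a \le b$ are integers, then $S \star a \star v \le S \star b \star v$ entry-wise for every finite $S$ (more generally, replacing any letter of a word by a larger one can only increase $\emptyset \star (\cdot)$ entry-wise; this should follow by an induction on word length, tracking how a single $\star m$ step interacts with the sorted set, using that $S \star m$ replaces the largest element $\le m$, or inserts $m$). This already gives the inequality $\le$ in the display when $T$ is a SSYT and, combined with the next point, handles general SVT: choosing the largest available letter in each cell dominates any other single-letter choice. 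Second, a statement that \emph{listing all letters of a cell can only help}: if $S$ is a cell (a set of integers forming one column segment, so its letters are consecutive in the column reading of the full tableau) and $P$ ranges over single-letter selections, then for the purposes of $\emptyset \star (\cdot)$, inserting the whole sorted block $\word(S)$ dominates inserting just one letter. The intuition is that $S \star \word(\text{block})$ performs several replacements/insertions, and each extra replacement of a ``largest element $\le$ something'' only pushes entries up; but one must be careful that the block is processed in increasing order so no replacement undoes a previous one.

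The main obstacle I anticipate is exactly this interaction between \emph{multiple} insertions within one cell and the entries \emph{to the right} (i.e. the already-processed cells, which in column order sit to the left in $\word$, hence were processed earlier — I will need to be careful about reading direction). Proving $\ge$ in the display — that $\emptyset \star \word(T_{\geq j})$ is actually \emph{attained} as a max, not merely an upper bound — is the subtle part: it is not a single $P$ that attains it coordinate by coordinate globally, but rather each \emph{coordinate} of the resulting column is attained by some $P$, and then the entry-wise max over $\cS(T)$ recovers the whole column. To make this rigorous I would likely argue: for a fixed target column (fixed $j$), run the $\star$-computation on $\word(T_{\geq j})$ and, at the step where the block $\word(S)$ for cell $S$ is inserted, note that only the \emph{largest} element of $S$ that survives as a ``new'' coordinate matters for the max, while the replacements performed by the smaller elements of $S$ are exactly the replacements that the single choice ``largest element of $S$'' would fail to perform; then show a single SSYT $P$ (choosing, in each cell, the letter that is ``active'' for the coordinate under consideration) realizes each coordinate. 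A cleaner route may be an induction on $\ex(T)$: remove one ``extra'' letter from some cell of $T$ to get a SVT $T'$ with $\cS(T') \subsetneq \cS(T)$, apply the inductive hypothesis to $T'$, and analyze the single extra $\star$ step — this localizes the combinatorics to one insertion and should be the most manageable way to close the argument. Finally, to conclude that $T_{\max}$ is a \emph{key} (as promised in the surrounding text, though strictly the statement of Lemma~\ref{L: T_max} only asserts the column formula), I would invoke that each column $\emptyset \star \word(T_{\geq j+1})$ is contained in, hence dominated by, $\emptyset \star \word(T_{\geq j})$ because $\word(T_{\geq j})$ is a suffix-extension... — actually a prefix-extension in column reading — so the containment of letters between consecutive columns needed for the key property follows from the behavior of $\star$ under prepending a word, mirroring the classical SSYT case.
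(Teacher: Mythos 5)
Your opening reduction is exactly the paper's: by Lemma~\ref{L: star computes key}, column $j$ of $K_+(P)$ is $\emptyset \star \word(P_{\geq j})$, so the lemma is equivalent to the identity $\max_{P \in \cS(T)} \bigl(\emptyset \star \word(P_{\geq j})\bigr) = \emptyset \star \word(T_{\geq j})$. But the engine you propose to prove it --- the monotonicity claim that replacing a letter by a larger one can only increase $\emptyset \star (\cdot)$ entry-wise, so that ``choosing the largest available letter in each cell dominates any other single-letter choice'' --- is false. Take $S = \{1,3\}$ (which is $\emptyset \star 31$), $a = 2$, $b = 3$: then $S \star 2 = \{2,3\}$ while $S \star 3 = \{1,3\}$, and $\{2,3\} \not\leq \{1,3\}$ entry-wise. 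This is precisely the phenomenon the paper isolates in the Remark following the definition of $T_{\max}$: for the SVT with word $3\,1\,23$, picking $2$ in the cell $\{2,3\}$ yields first column $\{2,3\}$ of the right key, whereas picking the \emph{larger} letter $3$ yields $\{1,3\}$. So your $\leq$ direction collapses at its first step, and the ``listing all letters of a cell can only help'' step, which you build on top of it, is likewise unsupported.

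The paper avoids any global monotonicity. It inducts on the number of cells outside column~$1$, strips off the top cell $X$ of the last column to get $T'$ with $\word(T) = \word(T')\word(X)$, sets $\{a_1 < \cdots < a_k\} = \emptyset \star \word(T')$ (computed by induction), shows $\min(X) \geq a_1$, and partitions $X$ into intervals $X_i = X \cap [a_i, a_{i+1})$; the right-hand side then becomes $\{\overline{a}_1 < \cdots < \overline{a}_k\}$ with $\overline{a}_i = \max(X_i)$ or $a_i$. The upper bound for an arbitrary single choice $x \in X$ is proved by locating which $b_j$ the letter $x$ bumps in $\emptyset \star \word(P')$ and comparing $x$ to $\overline{a}_j$ case by case, and attainment of each coordinate $\overline{a}_i$ is proved by choosing, for each $i$ separately, a $P'$ realizing $b_{i+1} = a_{i+1}$ (resp.\ $b_i = a_i$) together with $x = \max(X_i)$ (resp.\ arbitrary $x$). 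Your ``cleaner route'' of inducting on $\ex(T)$ by deleting one letter is not obviously salvageable either: deleting a letter shrinks $\cS(T)$, and relating the two maxima again runs into the non-monotonicity above. As written, the proposal has a genuine gap.
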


\begin{exa}
Let $T$ be the SVT in example ~\ref{E: SVT}.
Then $\word(T) = 567231471336$.
Column 1 of $T_{\max}$ consists of 
$\emptyset \star 567231471336 = \{3,6,7\}$.
Column 2 and 3 of $T_{\max}$
consist of $\emptyset \star 471336 = \{6,7\}$
and $\emptyset \star 36 = \{6\}$.
Thus, 
\begin{align*}
T_{\max} = 
\raisebox{0.3cm}{
\begin{ytableau}
3 & 6 & 6 \cr
6 & 7\cr
7 \cr 
\end{ytableau}} 
\: .    
\end{align*}

We may check this agrees
with the definition of $T_{\max}$.
First, we compute the right keys of
the two SSYT from $\cS(T)$ 
in example ~\ref{E: SVT}.
We get 
\begin{align*}
\raisebox{0.3cm}{
\begin{ytableau}
1 & 6 & 6 \cr
6 & 7\cr
7 \cr 
\end{ytableau}} 
\quad \textrm{ and } \quad
\raisebox{0.3cm}{
\begin{ytableau}
3 & 3 & 6 \cr
4 & 6\cr
6 \cr 
\end{ytableau}} \: ,
\end{align*}
whose entry-wise maximum
is the key above. 
The right keys of the other 46 SSYT
in $\cS(T)$
are entry-wise less than or equal to this key.
\end{exa}

To prove the lemma, 
we need the entry-wise maximum of sets:
\begin{defn}
Let $C$ be a finite collection of sets
such that all sets in $C$ have the same size $k$.
We may view each element of $C$ as a 
column of a SSYT and take the entry-wise maximum. 
Then $\max_{S \in C}S$ is the set corresponding
to the resulting column. 
More explicitly, 
$\max_{S \in C}S$ is the set with size $k$ 
such that its $i^{th}$ smallest number
is $$\max_{S \in C}(i^{th} \textrm{ smallest number in } S).$$
\end{defn}

\begin{proof}[Proof of Lemma~\ref{L: T_max}]
It is enough to assume $j = 1$.
By the definition of $T_{\max}$ 
and Lemma~\ref{L: star computes key},
column 1 of $T_{\max}$ consists of
$\max_{P \in \cS(T)} \emptyset \star \word(P)$.
Thus, we need to prove
\begin{align}
\label{E: SVT star}
\max_{P \in \cS(T)} \emptyset \star \word(P)
= \emptyset \star \word(T)    
\end{align}

First, we prove~(\ref{E: SVT star}) for $T$ 
that only has one column.
Let $S_1, \dots, S_k$ be the entries of $T$,
enumerated from bottom to top. 
We have $\min(S_i) > \max(S_{i+1})$
for $1 \leq i \leq k-1$.
As $P$ ranges over $\cS(T)$,
$\word(P)$ ranges over $s_1 \cdots s_k$ 
with $s_i \in S_i$.
Thus, $\emptyset \star \word(P)$
ranges over 
$\{ s_1 > \cdots > s_k \}$
with $s_i \in S_i$.
The left hand side of~(\ref{E: SVT star})
is $\{ \max(S_1)> \cdots > \max(S_k)\}$.
For the right hand side, 
notice that $\emptyset \star \word(S_1) 
= \{ \max(S_1)\}$.
Since $\max(S_1) > \max(S_2)$,
$\emptyset \star \word(S_1) \word(S_2)
= \{ \max(S_1), \max(S_2) \}$.
A simple induction on $i$ would yield
$$
\emptyset \star \word(S_1) \cdots \word(S_i)
= \{\max(S_1)>  \cdots > \max(S_i)\}.
$$
Since $\word(T) = \word(S_1) \cdots \word(S_k)$,
the right hand side of~(\ref{E: SVT star})
is $\{ \max(S_1)$ $> \cdots> \max(S_k)\}$.
We have established~(\ref{E: SVT star})
for $T$ with one column.

Now we prove~(\ref{E: SVT star})
for all SVT $T$.
We perform an induction on the number of
entries of $T$ that are not in column 1. 
For the base case, 
we assume $T$ has no such entries. 
In other words, $T$ has only one column.
This case is checked above. 

Now assume $T$ has more than one column.
Let $X$ be the highest entry 
in the rightmost column of $T$.
We may remove $X$ from $T$ and 
raise all entries below $X$.
The resulting filling, $T'$,
is clearly a SVT. 
We have
\begin{align*}
\word(T) & = \word(T') \word(X) 
\quad \quad \textrm{ and }\\
\{\word(P): P \in \cS(T)\}
& = \{ \word(P')x: P'\in \cS(T'), x \in X \}.    
\end{align*}
Our goal~(\ref{E: SVT star}) becomes
\begin{align}
\label{E: SVT star 2}
\max_{P' \in \cS(T'), x \in X} 
\emptyset \star \word(P')x
= \emptyset \star \word(T')\word(X).  
\end{align}

To show this equality, 
we first find an alternative way to 
write its right hand side. 
By the inductive hypothesis,
$$
\max_{P' \in \cS(T')} \emptyset \star \word(P')
= \emptyset \star \word(T').  
$$

Use $\{a_1 < \cdots < a_k\}$ to denote 
$\emptyset \star \word(T')$.
We know $k = |\emptyset \star \word(P')|$
for any $P' \in \cS(T')$.
Thus, $k$ is the number of entries in
column 1 of $K_+(P')$,
which is also the number of rows in $T'$ and $T$.
Consequently, 
$k = |\emptyset \star \word(P)|$
for any $P \in \cS(T)$.

Next, we show $\min(X) \geq a_1$ by contradiction.
Assume there exists $x \in X$ with $x < a_1$.
We may pick $P' \in \cS(T')$
such that $\min(\emptyset \star \word(P')) = a_1$.
Then consider the tableau $P \in \cS(T)$
with $\word(P) = \word(P')x$.
We have 
$\emptyset \star \word(P) 
= (\emptyset \star \word(P')) \star x$,
which has more than $k$ numbers.
Contradiction. 

Since $\min(X) \geq a_1$,
we may partition $X$ 
as $X_1 \sqcup \cdots \sqcup X_k$
by $X_i = X \cap [a_i, a_{i+1})$,
where $a_{k+1} = \infty$ by convention.
Consider the action of $\word(X) = 
\word(X_1) \cdots \word(X_k)$
on $\{a_1, \dots, a_k\}$.
When $X_i$ acts, 
$a_i$ is still in the set.
If $X_i$ is non-empty,
$a_i$ will be bumped by $\min(X_i)$,
which is then bumped by the second
smallest number in $X_i$.
Eventually, the action of $X_i$
replaces $a_i$ by $\max(X_i)$.
Thus, $\{a_1, \dots, a_k\} \star \word(X) 
= \{\overline{a}_1< \cdots < \overline{a}_k\}$,
where $\overline{a}_i = \max(X_i)$
if $X_i \neq \emptyset$ 
and $\overline{a}_i = a_i$ otherwise. 

We have turned the right hand side 
of~(\ref{E: SVT star 2})
into $\{\overline{a}_1< \cdots < \overline{a}_k\}$.
It remains to establish the following two statements:
\begin{itemize}
\item For any $P' \in \cS(T'), x \in X$ and $1 \leq i \leq k$,
the $i^{th}$ smallest number 
of $\emptyset \star \word(P')x$ is 
at most $\overline{a}_i$.
\item For any $1 \leq i \leq k$,
we may find $P' \in \cS(T')$ and $x \in X$
such that the $i^{th}$ smallest number 
of $\emptyset \star \word(P')x$
achieves $\overline{a}_i$.
\end{itemize}

Now we prove these two claims. 
\begin{itemize}
\item 
Take any $P' \in \cS(T')$ and $x \in X$.
Let $\{b_1 < \dots < b_k\} = 
\emptyset \star \word(P')$.
Our inductive hypothesis
implies $b_i \leq a_i$
for all $1 \leq i \leq k$.
Now, assume $x$ bumps $b_j$
when acting on $\{b_1 < \dots < b_k\}$,
becoming the $j^{th}$ smallest number
in the resulting set.
We only need to check $x \leq \overline{a}_j$.
Notice that $x < b_{j+1} \leq a_{j+1}$
with $b_{k+1} = \infty$ by convention.
Thus, $x \in X_1 \sqcup \cdots \sqcup X_j$.
If $X_j \neq \emptyset$,
$\overline{a}_j = \max(X_j) \geq x$.
Otherwise, $x \in X_1 \sqcup \cdots 
\sqcup X_{j-1}$, 
so $x < a_j = \overline{a}_j$.
\item
Take $1 \leq i \leq k$.
First, assume $X_i \neq \emptyset$.
By the inductive hypothesis,
we may pick $P' \in \cS(T')$ 
such that if we let 
$\{b_1 < \dots < b_k\} = 
\emptyset \star \word(P')$,
then $b_{i+1} = a_{i+1}$.
Pick $x = \max(X_i)$,
so $b_{i+1} = a_{i+1} > 
x \geq a_i \geq b_i$.
When $x$ acts on $\{b_1 < \dots < b_k\}$,
it will bump the $b_i$.
The $i^{th}$ smallest number
in the resulting set is 
$x = \overline{a}_i$.
Finally, assume $X_i = \emptyset$,
so $\overline{a}_i = a_i$.
Pick $P' \in \cS(T')$ 
such that if we let 
$\{b_1 < \dots < b_k\} = 
\emptyset \star \word(P')$,
then $b_{i} = a_{i}$.
Pick any $x \in X$.
If $x < a_i$,
$x$ will not bump $b_i$
when acting on $\{b_1 < \dots < b_k\}$.
Otherwise, we know $x \geq a_{i+1}$
because $X_i = \emptyset$.
Since $b_{i+1} \leq a_{i+1} \leq x$,
$x$ will not bump $b_i$
when acting on $\{b_1 < \dots < b_k\}$.
In either case, 
the $i^{th}$ largest number of 
$\{b_1 < \dots < b_k\} \star x$
remains to be $b_i = \overline{a}_i$.

\end{itemize}

\end{proof}

\begin{cor}
$T_{\max}$ is a key .
\end{cor}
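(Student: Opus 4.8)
The plan is to use the characterization of $T_{\max}$ from Lemma~\ref{L: T_max}: column $j$ of $T_{\max}$ consists of the set $\emptyset \star \word(T_{\geq j})$. A key, by definition, is a SSYT with normal shape such that every number appearing in column $j$ also appears in column $j-1$. We already know $T_{\max}$ is a SSYT (it is an entry-wise maximum of SSYT), and its shape is normal because $T$ has normal shape, so the only thing left to verify is the column-containment property. Concretely, writing $C_j := \emptyset \star \word(T_{\geq j})$ for the $j$-th column of $T_{\max}$, I need to show $C_j \subseteq C_{j-1}$ for every $j \geq 2$.

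First I would observe the recursive relationship between $\word(T_{\geq j-1})$ and $\word(T_{\geq j})$: since $T_{\geq j-1}$ is obtained from $T_{\geq j}$ by re-attaching column $j-1$ of $T$ on the left, and the column reading order reads column $j-1$ first, we have $\word(T_{\geq j-1}) = \word(C_{j-1}') \cdot \word(T_{\geq j})$, where $C_{j-1}'$ denotes column $j-1$ of $T$ (a sequence of sets read bottom-to-top, each contributing its elements in increasing order). Hence $C_{j-1} = \emptyset \star \big(\word(C_{j-1}')\,\word(T_{\geq j})\big) = \big(\emptyset \star \word(C_{j-1}')\big) \star \word(T_{\geq j})$. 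So the statement reduces to: if $A = \emptyset \star \word(C_{j-1}')$ is a set of size equal to the column height, and $C_j = \emptyset \star \word(T_{\geq j})$ has the same size, then $C_j \subseteq A \star \word(T_{\geq j})$. Actually it is cleaner to argue at the level of the known SSYT facts: for each $P \in \cS(T)$, the right key $K_+(P)$ is an honest key (this is the classical fact, implicit in the definition via $T_{\geq j}^\searrow$ and used throughout the paper), so column $j$ of $K_+(P)$ is contained in column $j-1$ of $K_+(P)$.

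The cleanest route, then, is: for every $P \in \cS(T)$, column $j$ of $K_+(P)$ $\subseteq$ column $j-1$ of $K_+(P)$. Taking entry-wise maxima over $P \in \cS(T)$ preserves this containment in the following sense — if I sort each column increasingly and compare the $i$-th smallest entries, the key condition on $K_+(P)$ says the multiset of column $j$ embeds into that of column $j-1$ entrywise after sorting (equivalently, the $i$-th smallest of column $j$ is $\geq$ the $i$-th smallest of column $j-1$, and column $j-1$ is at least as tall). Since $T_{\max}$'s column $j$ has $i$-th smallest entry equal to $\max_P$ of the $i$-th smallest entry of column $j$ of $K_+(P)$, and similarly for column $j-1$, and the inequality "$i$-th smallest of col $j$ $\geq$ $i$-th smallest of col $j-1$" holds for each $P$, it holds after taking maxima. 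Combined with the fact that a SSYT of normal shape whose columns satisfy this entrywise-containment-after-sorting condition is exactly a key, we conclude $T_{\max}$ is a key.

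The main obstacle is being careful about what "each number in column $j$ also appears in column $j-1$" means for a key and translating it into an entrywise inequality between sorted columns that is stable under taking coordinatewise maxima; one must check that the notion of key is equivalent to the statement "the $i$-th smallest entry of column $j$ is at least the $i$-th smallest entry of column $j-1$, for all valid $i$" (for SSYT of normal shape). Granting that equivalence — which follows from the column-strict, row-weak conditions plus normality — the rest is a one-line max-monotonicity argument, so I expect this corollary to be short. Alternatively, if one prefers to avoid that equivalence, I would argue directly with the $\star$ operator: show that for any finite $A \subseteq \Z_{>0}$ and any word $w$, the set $A \star w$ "dominates" $\emptyset \star w$ in the sense that sorting both increasingly, the $i$-th entry of $\emptyset\star w$ is $\leq$ the $i$-th entry of $A \star w$ — this is a routine induction on the length of $w$ tracking how a single letter $\star$ acts — and then apply it with $A = \emptyset \star \word(\text{column } j-1 \text{ of } T)$ and $w = \word(T_{\geq j})$, using that both $C_{j-1}$ and $C_j$ have the same cardinality (established inside the proof of Lemma~\ref{L: T_max}).
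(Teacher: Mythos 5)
Your reduction to showing $C_j \subseteq C_{j-1}$ via Lemma~\ref{L: T_max} and the factorization $\word(T_{\geq j-1}) = \word(\mathrm{col}_{j-1})\,\word(T_{\geq j})$ is the right setup, but both of your proposed ways to finish share the same genuine gap: you replace the key condition, which is a \emph{set containment} between consecutive columns, by the sorted entrywise inequality ``the $i$-th smallest entry of column $j$ is at least the $i$-th smallest entry of column $j-1$.'' These are not equivalent. That inequality is just the row-weakly-increasing condition, which every SSYT of normal shape satisfies; containment is strictly stronger. For instance, the SSYT with columns $\{2,4\}$ and $\{3\}$ satisfies the inequality but is not a key. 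Consequently the ``max-monotonicity'' step in your first route only re-proves that $T_{\max}$ is a SSYT, which you already had. Worse, the entrywise maximum of keys need not be a key: take $K_1$ with columns $\{1,4\},\{1\}$ and $K_2$ with columns $\{2,3\},\{3\}$; both are keys, but their entrywise maximum has columns $\{2,4\},\{3\}$, and $3 \notin \{2,4\}$. So no argument using only ``each $K_+(P)$ is a key'' plus coordinatewise maxima can succeed. Your second route has the same defect: you prove that $A \star w$ \emph{dominates} $\emptyset \star w$ entrywise, but what is needed is $\emptyset \star w \subseteq A \star w$, and domination does not imply containment even when cardinalities agree (e.g.\ $\{1,4\}$ is dominated by $\{2,5\}$). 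Note also that $|C_{j-1}| = |C_j|$ fails whenever column $j-1$ of $T$ is taller than column $j$.

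The repair is to prove containment-monotonicity of $\star$ itself: if $A \subseteq B$ then $A \star m \subseteq B \star m$ (if $a \in A$ survives in $A \star m$ but is bumped from $B$, then $a$ is the largest element of $B$ that is $\leq m$, hence also the largest such element of $A$, so it would have been bumped from $A$ too --- contradiction). Induction on the length of $w$ then gives $\emptyset \star w \subseteq B \star w$ for every $B$ and $w$. Applying this with $B = \emptyset \star \word(\mathrm{col}_{j-1}(T))$ and $w = \word(T_{\geq j})$ yields $C_j \subseteq C_{j-1}$. This is exactly the content of the external lemma the paper invokes: its entire proof is the observation that $\emptyset \star \word(T_{\geq j}) \supseteq \emptyset \star \word(T_{\geq j+1})$ follows from \cite[Lemma 4.9]{SY}.
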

\begin{proof}
Let $j$ be a positive integer.
By Lemma ~\ref{L: T_max}, 
it remains to show 
$$
\emptyset \star \word(T_{\geq j})
\supseteq \emptyset \star \word(T_{\geq j+1}).
$$
This is implied by ~\cite[Lemma 4.9]{SY} .
\end{proof}

\begin{defn}
The right key of a SVT $T$
is $K_+(T) := T_{\max}$.
Let $\SVT(\alpha)$ be the set of all $T$
such that $K_+(T) \leq \key(\alpha)$.
\end{defn}

\begin{rem}
By the definition of $K_+(\cdot)$,
we may also describe $\SVT(\alpha)$ as 
all SVT $T$ such that 
$\cS(T) \subseteq \SSYT(\alpha)$.
\end{rem}

We end this section by introducing our main result:
\begin{thm}
\label{T: Main}
Let $\alpha$ be a weak composition.
Then
\begin{equation}
\label{E: Main}
\fLb_\alpha = \sum_{T \in \SVT(\alpha)} x^{\wt(T)}\beta^{\ex(T)}.
\end{equation}
\end{thm}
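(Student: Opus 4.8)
The plan is to mimic Kashiwara's inductive proof that the character of a Demazure module is computed by successively applying Demazure operators, transposed into the set-valued world. The starting point is the base case: when $\alpha$ is a partition, $\key(\alpha)$ has columns $\{1,\dots,h_1\},\{1,\dots,h_2\},\dots$ and one checks directly (using Lemma~\ref{L: T_max}) that $T\in\SVT(\alpha)$ forces $T$ to be the unique column-superstandard SVT with no ``extra'' entries, i.e.\ the tableau $u_\lambda$ with only $r$'s in row $r$; hence the right side of~(\ref{E: Main}) is $x^\alpha=\fLb_\alpha$. So the real content is the inductive step across an ascent: if $\alpha_i<\alpha_{i+1}$ and the formula holds for $\beta:=s_i\alpha$, then applying $\pib_i$ to $\sum_{T\in\SVT(\beta)}x^{\wt(T)}\beta^{\ex(T)}$ must reproduce $\sum_{T\in\SVT(\alpha)}x^{\wt(T)}\beta^{\ex(T)}$.

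To run the induction I would build, in Section~\ref{S: crystal}, an abstract seminormal Kashiwara $\mathrm{GL}_n$-crystal structure $(e_i,f_i,\varepsilon_i,\varphi_i,\wt)$ on the set of all SVT with a fixed shape and entries in $[n]$, generalizing the bracketing rule on SSYT recalled in the excerpt. The key compatibility property to prove is that this crystal is $K_+$-compatible: the right key $K_+(\cdot)$ is constant on each $i$-string, or more precisely the crystal operators $f_i,e_i$ move $T$ inside $\SVT(\alpha)$ in a controlled way governed only by $K_+(T)$. This is exactly the role of the stated Corollary~\ref{C: All, none, or source}: for an $i$-string and a weak composition $\alpha$, either the whole string lies in $\SVT(\alpha)$, or none of it does, or only its source does --- the set-valued analogue of \cite[Prop.~3.3.5]{K}. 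Granting that, one defines $\F_i S=\{f_i^j(T):T\in S,\,j\ge0\}\setminus\{\emt\}$ and shows $\SVT(\alpha)=\F_{i_1}\cdots\F_{i_k}\{u_\lambda\}$ for any reduced word $\alpha=s_{i_1}\cdots s_{i_k}\lambda$, the SVT version of Theorem~\ref{T: obtain SSYT by f_i}.

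The final step is the polynomial identity: I would show that for any finite union of $i$-strings $S$ that is ``$i$-closed from below'' (contains all sources and is a union of lower segments), the generating function $g_S=\sum_{T\in S}x^{\wt(T)}\beta^{\ex(T)}$ satisfies $\pib_i(g_{S})=g_{\F_i S}$. By linearity this reduces to a single $i$-string $X_0\xrightarrow{i}\cdots\xrightarrow{i}X_k$; here $\ex(\cdot)$ is constant along the string (the crystal operators only relabel $i\leftrightarrow i+1$, preserving the total number of entries), and Lemma~\ref{L: string weight} pins down the weights, so $g_{\{X_0,\dots,X_k\}}=\beta^{e}\,x^{\wt(X_0)}(1+x_{i+1}/x_i+\cdots+(x_{i+1}/x_i)^k)$ with $\wt(X_0)=(\dots,a+k,a,\dots)$ in coordinates $i,i+1$; applying $\pib_i$ to a dominant-in-those-two-coordinates monomial and summing the geometric series gives precisely $g_{\F_i\{X_0,\dots,X_k\}}$ --- a routine $\pib_i$-computation identical in form to the classical $\pi_i$-computation, but one must also account for the extra strings created when $\F_i$ enlarges the set, which is where the ``all, none, or source'' trichotomy feeds back in. Combining the base case, the crystal description of $\SVT(\alpha)$, and this operator identity completes the induction.

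The main obstacle, and the part that carries all the genuine novelty, is constructing the crystal operators $f_i,e_i$ on SVT so that they are simultaneously (a) well-defined (output is again a valid SVT), (b) seminormal and satisfying K1--K2, and (c) compatible with $K_+(\cdot)$ in the strong sense needed for Corollary~\ref{C: All, none, or source}. The naive bracketing rule applied cell-by-cell fails to be $K_+$-compatible --- indeed this is precisely why the $U_q(\mathfrak{gl}_n)$-crystal of \cite{MPS} does not work here --- so the bracketing must be done on a cleverly chosen reading word of the \emph{sets} (likely reading each set as a block, and changing an $i$ to $i+1$ only at a carefully selected cell, possibly the one dictated by the $\star$-computation of $K_+$), and verifying that such a move preserves the SVT column/row conditions and interacts correctly with $\emptyset\star\word(T_{\ge j})$ is the technical heart of Section~\ref{S: crystal}. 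Once that crystal is in hand, everything else is a faithful transcription of Kashiwara's Demazure-module argument with $\pi_i$ replaced by $\pib_i$ and SSYT replaced by SVT.
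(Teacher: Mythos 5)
Your outline matches the paper's high-level strategy (verify the recursion $\pib_i$ across descents via a crystal on SVT), but there is a concrete gap at the center of your inductive step: ordinary $i$-strings cannot do the job you assign to them. Any crystal operator $f_i$ satisfying axiom K1 changes the weight by $-v_i+v_{i+1}$, so it preserves the total degree $|\wt(T)|$ and hence preserves $\ex(T)$ --- exactly as you note when you say $\ex$ is constant along a string. But then two of your claims fail. First, $\F_{i_1}\cdots\F_{i_k}\{u_\lambda\}$ built from such $f_i$ consists entirely of tableaux with $\ex=0$, i.e.\ it can only ever produce $\SSYT(\alpha)$, never the set-valued tableaux with extra entries; so the claimed identity $\SVT(\alpha)=\F_{i_1}\cdots\F_{i_k}\{u_\lambda\}$ is impossible for genuine crystal operators. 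Second, the operator identity $\pib_i(g_S)=g_{\F_i S}$ fails on a single $i$-string: writing $g=\beta^e x_i^a x_{i+1}^b m$ with $a\ge b$, one has $\pib_i(g)=\pi_i(g)+\beta\,\pi_i(x_{i+1}g)$, and the second summand contributes monomials of $x$-degree $|a+b|+1$ carrying $\beta^{e+1}$. These must be matched by SVT with one more extra entry, which by the observation above cannot lie in the $f_i$-orbit of $X_0$. The ``all, none, or source'' trichotomy does not repair this; the missing terms are not in any $i$-string reachable from $S$ by $f_i$ at all.

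The paper's resolution is precisely the structure your proposal omits: ``square root'' operators $f_i'$, each of which either inserts an $i+1$ into a set (raising $\ex$ by $1$) or deletes an $i$ from a set containing both $i$ and $i+1$ (lowering $\ex$ by $1$), with $f_i=f_i'\circ f_i'$. Orbits of $f_i'$ are \emph{double} $i$-strings $T_0,\dots,T_{2k}$, which interleave two ordinary $i$-strings whose $\ex$-values differ by one; the generating function of a double $i$-string is exactly $\pib_i\bigl(x^{\wt(T_0)}\beta^{\ex(T_0)}\bigr)$, the even-indexed half accounting for $\pi_i(g)$ and the odd-indexed half for $\beta\,\pi_i(x_{i+1}g)$. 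The trichotomy (Corollary~\ref{C: All, none, or source}), the generation statement (with $\F_i'$ in place of $\F_i$), and the key-tracking lemmas are all proved for double $i$-strings. Your paragraph on the ``technical heart'' correctly identifies that the bracketing must be done on set-blocks and made $K_+$-compatible, but without the $\ex$-changing half-step the induction cannot close.
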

\begin{exa}
Let $\alpha = (1,0,2)$.
Then $\SVT(\alpha)$ consists of the following:

\begin{ytableau}
1 & 1\\
2 
\end{ytableau}
\quad
\begin{ytableau}
1 & 2\\
2 
\end{ytableau}
\quad
\begin{ytableau}
1 & 1\\
3 
\end{ytableau}
\quad
\begin{ytableau}
1 & 3\\
2 
\end{ytableau}
\quad
\begin{ytableau}
1 & 3\\
3 
\end{ytableau}
\\[1mm]

\begin{ytableau}
1 & 12\\
2 
\end{ytableau}
\quad
\begin{ytableau}
1 & 1\\
23 
\end{ytableau}
\quad
\begin{ytableau}
1 & 13\\
2 
\end{ytableau}
\quad
\begin{ytableau}
1 & 3\\
23 
\end{ytableau}
\quad
\begin{ytableau}
1 & 13\\
3 
\end{ytableau}
\quad
\begin{ytableau}
1 & 23\\
2 
\end{ytableau}
\\[1mm]

\begin{ytableau}
1 & {\small 123}\\
2 
\end{ytableau}
\quad
\begin{ytableau}
1 & 13\\
23 
\end{ytableau}

Thus, we may write $\fLb_{(1,0,2)}$ as
\begin{equation*}
\begin{split}
&x_1^2x_2 + x_1x_2^2 + x_1^2x_3 
+ x_1x_2x_3 + x_1x_3^2 \\
+ & \beta(x_1^2x_2^2 + 2x_1^2x_2x_3 + x_1x_2x_3^2
+ x_1^2x_3^2 + x_1x_2^2x_3)\\
+ & \beta^2(x_1^2x_2^2x_3 + x_1^2x_2x_3^2)
\end{split}
\end{equation*}
\end{exa}

Equation ~(\ref{E: Main}) generalizes 
the two combinatorial rules in \S\ref{S: Background}:
\begin{enumerate}
\item[$\bullet$] If we set $\beta = 0$, then the left hand side of
Equation ~(\ref{E: Main}) becomes $\kappa_\alpha$.
In the right hand side, 
only $T$ with $\ex(T) = 0$ can survive in the sum.
Clearly, $\{T \in \SVT(\alpha): \ex(T) = 0\} = \SSYT(\alpha)$.
Thus, our rule extends Equation ~(\ref{E: SSYT rule of Demazure}).
\item[$\bullet$] Assume $\alpha$ is a weak composition whose
first $n$ entries are weakly increasing
and the other entries are all 0.
In each column of $\key(\alpha)$, 
there are $n, n-1, n-2, \dots$.
Then $T \in \SVT(\alpha)$ if and only if $T$ has the shape $\alpha^+$
and entries of $T$ are subsets of $[n]$.
Thus, our rule extends Equation ~(\ref{E: Symmetric SVT rule}).
\end{enumerate}

We will prove Theorem ~\ref{T: Main}
in the next section. 

\section{Abstract Kashiwara crystals 
on SVT} 

\label{S: crystal}

As stated in \S\ref{S: Background},
$\SSYT(\alpha)$ can be computed using
the $f_i$ operators. 
To prove our result, we can construct 
an abstract Kashiwara $\textrm{GL}_n$-crystal
on the set of SVT with entries in $[n]$.

\subsection{Constructing an abstract 
Kashiwara crystal on SVT}

Let $\B_n$ be the set of SVT
with entries in $[n]$. 
We would like to turn $\B_n$
into an abstract Kashiwara crystal.
First, we let $\wt(\cdot)$ be the weight function 
on SVT defined above. 
Next, we would like to generalize the $i$-word
defined on SSYT in \S\ref{S: Background}.

\begin{defn}
Let $\B_\word$ be the set of finite words
generated by $``("$, $``)"$, and $``)-("$
under concatenation.

Take $T \in \B_n$ and $i \in [n-1]$. 
The {\em $i$-word} of $T$
is the following element of $\B_\word$:

Read through entries of $T$ in the column order. 
Whenever we see a set containing
$i$ but not $i+1$,
we write $``)"$.
Whenever we see a set containing
$i+1$ but not $i$,
we write $``("$.
Whenever we see a set containing
$i$ and $i + 1$,
we write $``)-("$. 
\end{defn}

\begin{exa}
\label{E: i-word}
Consider the following element from $\B_4$
$$
T = 
\raisebox{0.3cm}{
\begin{ytableau}
1 & 1 & 2 & 23\cr
2 & 23\cr
34 \cr 
\end{ytableau}}\: .    
$$
It has 1-word $()()(($.
It has 2-word $())-())-($.
It has 3-word $)-((($.
\end{exa}

To define an abstract Kashiwara 
$\textrm{GL}_n$-crystal on $\B_n$,
we first need some definitions on $\B_\word$.
Take $w \in \B_\word$.
Ignore its $``-"$ and pair 
the $``("$ with $``)"$ in the usual way. 
Then we construct an equivalence
relation on all characters.
This relation is generated by
the following two requirements. 
\begin{enumerate}
\item[$\bullet$] If an $``("$ is paired with $``)"$,
then these two characters
and everything between them
should be in the same class.
\item[$\bullet$] For each $``)-("$, 
these three characters are in the 
same class.
\end{enumerate}
It is easy to see that each 
equivalence class is a contiguous 
subword. 
For instance, 
the first word 
$))-(())-())-(()-($ 
is partitioned into 4 classes:
$$
\textcolor{myblue}{)} \quad\quad\textcolor{myblue}{)-(())-()} \quad\quad\textcolor{mypurple}{)-(}
\quad\quad\textcolor{myred}{()-(}
$$

Notice that any unpaired $``)"$ 
must be the first character in its class.
Any unpaired $``("$ must be the 
last character in its class. 
Thus, we may classify each class
by whether it starts with an unpaired $``)''$
and whether it ends with an unpaired $``(''$.
\begin{enumerate}
\item[$\bullet$] {\em null form}: This class
does not have unpaired $``(''$ or $``)''$.
For example, $``(()-())-()''$.
\item[$\bullet$] {\em left form}: This class
does not have unpaired $``)''$ but
ends with an unpaired $``(''$.
For example, $``(())-(''$.
\item[$\bullet$] {\em right form}: This class
does not have unpaired $``(''$ but
starts with an unpaired $``)''$.
For example, $``)-()-()''$.
\item[$\bullet$] {\em combined form}: This class
start with an unpaired $``)''$ and
ends with an unpaired $``(''$.
For example, $``)-()-(())-(''$.
\end{enumerate}
In the previous example, 
the first two classes are 
right forms.
The third class is a combined form
and the last class is a left form.
In general, 
if we ignore the null-forms in a word,
then we have several right forms,
followed by zero or one 
combined form, followed by
several left forms.

We can describe each left form,
right form and combined form.
If a class is a left form, 
then it must be $``(''$ or $``(u)-(''$, 
where $u$ is some word.
Similarly,
if a class is a right form, 
then it must be $``)''$ or $``)-(u)''$.
If a class is a combined form, 
then it must be $``)-(''$ or $``)-(u)-(''$.
Based on these descriptions,
we define a way to change
one form into another. 
The transformations can be described
as follows.
\[
\begin{array}{c@{\quad\quad\longleftrightarrow\quad\quad}c@{\quad\quad\longleftrightarrow\quad\quad}c}
\textcolor{myblue}{)} & \textcolor{mypurple}{)-(} & \textcolor{myred}{(} \: , \\
\textcolor{myblue}{)-(u)} & \textcolor{mypurple}{)-(u)-(} & \textcolor{myred}{(u)-(}\: ,
\end{array}
\]
where $u$ is some word.
For instance, 
if we transform $``)'', ``)-()-(''$ and
$``)-(()())''$ into left forms, 
we get $``('', ``()-(''$ and
$``(()())-(''$.

\begin{defn}
Take $w \in \B_\word$.
If $w$ has no right forms, 
then $f$ sends it to $\emt$. 
Otherwise, if there is no combined form,
then $f$ transforms the 
last right form into a left form.
If there is a combined form,
then $f$ transforms the combined form 
into a left form 
and transforms the last right form into 
a combined form.
\end{defn}

\begin{exa}
\label{E: f on words}
We have 
$$
\textcolor{myblue}{)-()} \; \textcolor{myblue}{)} \;  \textcolor{myred}{()-(} \quad 
\xrightarrow{f}
\quad \textcolor{myblue}{)-()} \; \textcolor{myred}{(} \;  \textcolor{myred}{()-(}
$$
$$
\textcolor{myblue}{)-()} \; \textcolor{myblue}{)} \;  \textcolor{mypurple}{)-(} \; \textcolor{myred}{(} \quad 
\xrightarrow{f}
\quad \textcolor{myblue}{)-()} \;  \textcolor{mypurple}{)-(} \; \textcolor{myred}{(}\; \textcolor{myred}{(}.
$$
\end{exa}

The $e$ operator can be defined similarly.
\begin{defn}
Take $w \in \B_\word$.
If it has no left forms, 
then $e$ sends it to $\emt$. 
Otherwise, if there is no combined form,
then $e$ transforms the 
first left form into a right form.
If there is a combined form,
then $e$ transforms the combined form 
into a right form 
and transforms the first left form into 
a combined form.
\end{defn}

The $f$ and $e$ operators defined above
have ``square roots''. 
\begin{defn}
Define 
$$f', e': \B_\word 
\sqcup \{ \emt \}\rightarrow 
\B_\word \sqcup \{ \emt \}$$
based on the following cases:

\begin{enumerate}
\item $f'(\emt) 
= e'(\emt) = \emt$
\item Assume $w \in \B_\word$ 
has a combined form. 
The $f'$ will transform the combined form 
into a left form.
The $e'$ will transform the combined form 
into a right form.
\item Assume $w \in \B_\word$ has 
no combined form.
If there is a right form,
$f'$ will transform the last right form 
into a combined form.
Otherwise, $f'(w) = \emt$.
If there is a left form,
$e'$ will transform the last left form 
into a combined form.
Otherwise, $e'(w) = \emt$.
\end{enumerate}
\end{defn}

\begin{exa}
For example
$$
\textcolor{myblue}{)-()} \; \textcolor{myblue}{)} \;  \textcolor{myred}{()-(}
 \quad \xrightarrow{f'}  \quad 
\quad \textcolor{myblue}{)-()} \; 
\textcolor{mypurple}{)-(} \;  \textcolor{myred}{()-(}
\quad \xrightarrow{f'}  \quad 
\textcolor{myblue}{)-()} \; 
\textcolor{myred}{(} \;  \textcolor{myred}{()-(}
$$
\end{exa}

We have the following relations between
$f, e, f'$ and $e'$.
\begin{lemma}
Take $w_1, w_2$ from $B_\word$.
Then we have:
\begin{itemize}
\item $f'(w_1) = w_2$ if and only if $e'(w_2) = w_1$. 
\item $f(w_1) = w_2$ if and only if $f'(f'(w_1)) = w_2$.
\item $f(w_1) = w_2$ if and only if $e(w_2) = w_1$. 
\end{itemize}
\end{lemma}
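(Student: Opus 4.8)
The plan is to reduce all three statements to bookkeeping on what I will call the \emph{form profile} of a word. The excerpt already supplies the two facts that make this work: every $w\in\B_\word$ splits into contiguous equivalence classes, each null, left, right, or combined; and, after deleting the null classes, these classes occur in the order (several right forms)(at most one combined form)(several left forms). Furthermore each right form is $``)"$ or $``)-(u)"$, each left form is $``("$ or $``(u)-("$, each combined form is $``)-("$ or $``)-(u)-("$, and every interchange operation used to define $f,e,f',e'$ fixes the content word $u$ and fixes the \emph{position} of the class inside $w$, relabeling only its type. So the first step is to package this: to $w$ assign the finite sequence $\rho(w)$ of its non-null classes, each tagged by a type in $\{R,C,L\}$ and carrying its content word. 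Then $\rho(w)$ determines $w$ up to the (irrelevant) placement of null forms, $\rho(w)$ is always a tagged word of shape $R^{a}C^{\varepsilon}L^{b}$ with $\varepsilon\in\{0,1\}$, and each of $f,e,f',e'$ acts on $w$ through a purely local relabeling of $\rho(w)$ at the $R$/$C$/$L$ boundary (with $\emt$ recording the cases where the form the operator needs is absent); here the rule for $e'$ is to be read as acting on the first left form, which is the one forced by the requirement that a combined form precede every left form.

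Granting this, the first bullet is a short case check on whether $w_1$ has a combined form. If $\rho(w_1)=R^{a}CL^{b}$, then $\rho(f'(w_1))=R^{a}L^{b+1}$ with the former $C$ now the first left form, and applying $e'$ (no combined form present) turns that first left form back into a combined form, restoring $\rho(w_1)$ with all content words unchanged. If $\rho(w_1)=R^{a}L^{b}$ with $a\ge 1$, then $\rho(f'(w_1))=R^{a-1}CL^{b}$, and $e'$ (combined form present) turns $C$ back into $R$. If $w_1$ has neither a right nor a combined form, then $f'(w_1)=\emt$, and no $w_2\in\B_\word$ satisfies $e'(w_2)=w_1$ because every word in the image of $e'$ contains a right form or a unique combined form. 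Running the same pictures starting from $w_2$ gives the converse implication, so $f'$ and $e'$ are mutually inverse partial bijections.

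For the second bullet I split again on $w_1$. If $w_1$ has no right form, then $f(w_1)=\emt$, while $f'(w_1)$ is either $\emt$ or has profile $L^{b+1}$, so $f'(f'(w_1))=\emt$ as well. If $\rho(w_1)=R^{a}L^{b}$ with $a\ge 1$, then $\rho(f(w_1))=R^{a-1}L^{b+1}$, whereas $\rho(f'(w_1))=R^{a-1}CL^{b}$ and a second $f'$ turns that $C$ into a left form, yielding $R^{a-1}L^{b+1}$; the content words and class positions agree, so the words coincide. If $\rho(w_1)=R^{a}CL^{b}$ with $a\ge 1$, then $\rho(f(w_1))=R^{a-1}CL^{b+1}$, while $f'(w_1)$ has profile $R^{a}L^{b+1}$ and a second $f'$ turns its last right form into a combined form, again producing $R^{a-1}CL^{b+1}$. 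The mirror argument (reverse the profile and swap $R\leftrightarrow L$, $f\leftrightarrow e$) gives $e=e'\circ e'$ identically. Finally the third bullet follows formally: if $f(w_1)=w_2\in\B_\word$, then $f'(w_1)\neq\emt$ (else $w_2=f'(\emt)=\emt$), so with $v:=f'(w_1)\in\B_\word$ we have $f'(v)=w_2$, hence $e'(w_2)=v$ and $e'(v)=w_1$ by the first bullet, whence $e(w_2)=e'(e'(w_2))=e'(v)=w_1$; the reverse implication is symmetric.

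I do not expect a genuine obstacle: the whole argument is a finite, if slightly fussy, case analysis, so the only real work is choosing the normal form $R^{a}C^{\varepsilon}L^{b}$ cleanly enough that the cases collapse, handling the degenerate values $a=0$, $b=0$ and the empty profile, and keeping the $\emt$ conventions straight. The one point that must actually be verified rather than waved through is that the content word $u$ inside each form is carried along unchanged and that no class moves; this is exactly what the content- and position-preserving interchange rules $``)"\leftrightarrow``)-("\leftrightarrow``("$ and $``)-(u)"\leftrightarrow``)-(u)-("\leftrightarrow``(u)-("$ guarantee.
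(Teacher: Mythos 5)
Your proof is correct and is exactly the case analysis that the paper compresses into ``Immediate from definitions'': decompose the word into its right/combined/left forms (your profile $R^aC^{\varepsilon}L^b$) and track how each operator relabels a single class at the boundary, using that the interchange rules preserve content and position. Your reading of $e'$ as acting on the \emph{first} left form, rather than the ``last'' as the paper's definition literally states, is the right one --- it is forced by the requirement that a combined form precede every left form, and the first bullet of the lemma would fail under the literal reading --- so that interpretive step is a necessary correction rather than a gap.
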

\begin{proof}
Immediate from definitions. 
\end{proof}

\begin{rem}
\label{R: How f' works}
Assume $f'(w) \neq \emt$.
Then $f'$ must do 
one of the following to $w$: 
\begin{enumerate}
\item[$\bullet$] 
If $w$ has no combined form, $f'$ 
changes the last character of the last right form, 
which is $``)''$, into $``)-(''$.
Thus, $f'(w)$ has a combined form.
\item[$\bullet$] If $w$ has a combined form, 
$f'$ changes the first 3 characters of the combined form,
which are $``)-(''$, into $``(''$.
Thus, $f'(w)$ has no combined form.
\end{enumerate}

As a summary, $f'$ decreases the number of $``(''$ by 1
or increases the number of $``)''$ by ~1.
Consequently, $f$ decreases the number of $``(''$ by 1
and increases the number of $``)''$ by ~1.
\end{rem}

Finally, we are ready to define 
an abstract Kashiwara $\textrm{GL}_n$-crystal
on $\B_n$.
We start with $\varepsilon(\cdot)$
and $\varphi(\cdot)$.

\begin{defn}
Take $T \in \B_n$ and take $i \in [n-1]$.
Let $\varepsilon_i(T)$ (resp. $\varphi_i(T)$)
be the number of left forms (resp. right forms)
in the $i$-word of $T$.
\end{defn}

Now we would like to define the crystal operators 
$f_i$ and $e_i$ on $\B_n$.
First, we define their ``square roots'' 
$f_i'$ and $e_i'$:

\begin{defn}
Define $f_i'$ on $\B_n \cup \{\emt\}$.
First, $f_i'(\emt) = \emt$.
Now take $T \in \B_n$.
Apply $f'$ on the $i$-word of $T$.
Change $T$ accordingly and obtain $f_i'(T)$. 
More explicitly, 
based on Remark ~\ref{R: How f' works},
we may describe $f_i'$ as the following:
\begin{enumerate}
\item[$\bullet$] 
Assume $f'$ sends the $i$-word of $T$ to $\emt$.
Then $f_i'(T) = \emt$.
\item[$\bullet$] 
Assume $f'$ changes a $``)''$ 
into $``)-(''$.
Find the set in $T$ corresponding to this $``)''$ 
and add $i+1$ to this set.
\item[$\bullet$] 
Assume $f'$ changes a $``)-(''$ into $``(''$.
Find the set in $T$ corresponding to this $``)-(''$ 
and remove the $i$ in it.
\end{enumerate}
\end{defn}

\begin{exa}
\label{E: f_i'}
Consider $T$ in 
Example~\ref{E: i-word}.
The operator $f'$ would turn
its $2$-word $``())-())-(''$
into $``())-()(''$,
by changing the last $``)-(''$
into $``(''$.
Accordingly, 
we remove 2 from the set corresponding
to this $``)-(''$, 
which is the highest entry 
in column 4 of $T$.
Thus, we have
$$T = 
\raisebox{0.3cm}{
\begin{ytableau}
1 & 1 & 2 & 23\cr
2 & 23\cr
34 \cr 
\end{ytableau}}
\quad \xrightarrow{f_2'}  \quad 
\raisebox{0.3cm}{
\begin{ytableau}
1 & 1 & 2 & 3\cr
2 & 23\cr
34 \cr 
\end{ytableau}}
= f_2'(T).
$$

Now $f_2'(T)$
has 2-word $``())-()(''$.
The operator $f'$ would turn
this word into $``())-()-((''$
by changing the last $``)''$
into $``)-(''$.
Accordingly, 
we add 3 to the set
corresponding to this $``)''$,
which is the highest entry 
in column 3 of $f_2'(T)$.
Thus, we have
$$f_2'(T) = 
\raisebox{0.3cm}{
\begin{ytableau}
1 & 1 & 2 & 3\cr
2 & 23\cr
34 \cr 
\end{ytableau}}
\quad \xrightarrow{f_2'}  \quad 
\raisebox{0.3cm}{
\begin{ytableau}
1 & 1 & 23 & 3\cr
2 & 23\cr
34 \cr 
\end{ytableau}}
=f_2'(f_2'(T)).
$$
\end{exa}

\begin{lemma}
\label{L: f_i' well-defined}
The operator $f_i'$ is well-defined.
That is, for any $T \in \B_n$,
$f_i'(T)$ is a SVT or $\emt$. 
\end{lemma}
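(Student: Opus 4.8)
The plan is to show that each of the two non-trivial local moves described in the definition of $f_i'$ -- adding $i+1$ to a set that currently contains $i$ but not $i+1$ (the $``)\mapsto )\text{-}("$ case), and deleting $i$ from a set that currently contains both $i$ and $i+1$ (the $``)\text{-}(\mapsto ("$ case) -- produces again a valid SVT. Since $f_i'$ changes only one entry $S$ of $T$, I only need to check the two local conditions at that cell: (a) after the change, $S$ itself is still a legal set (nonempty, finite subset of $\Z_{>0}$), which is immediate; and (b) for every choice of one representative from each cell, the resulting filling is still an SSYT, i.e. the column-strict/row-weak conditions still hold between $S$ and each of its (at most four) neighbours. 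By the definition of SVT, it is equivalent and cleaner to verify the inequalities in terms of $\min$ and $\max$ of the relevant sets: a filling of neighbouring cells by sets $A$ (left of, or above, $B$) is locally consistent as an SVT precisely when $\max A \le \min B$ in a row and $\max A < \min B$ in a column.

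First I would handle the case where $f'$ turns a $``)"$ into $``)\text{-}("$, so we pass from $S$ to $S' = S \cup \{i+1\}$ with $i \in S$, $i+1 \notin S$. Here $\min S' = \min S$ and $\max S' = i+1$. The left/top neighbours are unaffected in the relevant direction since $\min$ did not change; the only thing to check is that inserting $i+1$ does not collide with the neighbour $B$ to the right of $S$ or below $S$. For the right neighbour in the same row we need $\max S' = i+1 \le \min B$; for the bottom neighbour we need $i+1 < \min B$. The key combinatorial input is that, in the $i$-word, the $``)"$ coming from $S$ is unpaired on the left of a left-bracket coming from $B$ would be the obstruction; more directly, I will argue that if such a neighbour $B$ had $\min B \le i+1$, then $B$ would have to contain $i$ or $i+1$, and a short case analysis on whether $B$ contains $i$, $i+1$, or both -- together with the original SSYT inequalities relating $S$ and $B$ -- shows this is impossible given $i \in S$, $i+1\notin S$. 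This is the step I expect to be the main obstacle: pinning down exactly why the cell selected by the bracketing rule is one whose neighbours leave room for the insertion. The cleanest route is to invoke that $S$ corresponds to an \emph{unpaired} $``)"$ that is the last character of a right form (so every cell weakly after $S$ in column order that could "see" the new $i+1$ either already contains $i+1$, contributing a $``("$ that would have paired, or lies strictly southeast and hence has strictly larger entries).

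For the second case, $f'$ turns a $``)\text{-}("$ into $``("$, so $S' = S \setminus \{i\}$ with $i, i+1 \in S$; here $\max S' = \max S$ while $\min S'$ strictly increases (it was $\le i$ and becomes $> i$, in fact the previous second-smallest element, which is $\ge i+1$). Now the right/bottom neighbours are unaffected in the relevant direction since $\max$ is unchanged, and the only risk is with the left neighbour $A$ in the same row (need $\max A \le \min S'$) or the top neighbour $A$ (need $\max A < \min S'$). Since $\min S'$ only \emph{increased}, both inequalities are \emph{weaker} than before -- so they continue to hold automatically. Thus this case is essentially free once the bookkeeping on $\min$/$\max$ is set up. Finally, I should note $S'$ is still nonempty in both cases: in the first we added an element, and in the second $S$ contained at least the two distinct elements $i, i+1$, so $S'$ retains at least one. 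Assembling these observations gives that $f_i'(T)$, when not $\emt$, satisfies the SVT condition cell-by-cell, completing the proof.
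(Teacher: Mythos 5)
Your reduction of the problem to local checks at the single modified cell is correct and matches the paper's strategy, and your treatment of the deletion case ($S' = S \setminus \{i\}$ with $i, i+1 \in S$) is complete --- indeed, since removing an element only shrinks the set of choices defining $\cS(T)$ while keeping the cell nonempty, this case is immediate, which is essentially all the paper says about it. The insertion case is where the real content of the lemma lives, and there your proposal stops at exactly the point where the work begins. You correctly identify that the two things to verify are $i+1 \notin S_\downarrow$ and $i \notin S_\rightarrow$ (in the paper's notation), but you prove neither; you only assert that ``a short case analysis \dots together with the original SSYT inequalities relating $S$ and $B$'' will do it. It will not: the local inequalities alone are consistent with, say, $S = \{i\}$ sitting directly above $S_\downarrow = \{i+1\}$, a perfectly valid SVT in which inserting $i+1$ into $S$ would break column-strictness. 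What rules this configuration out is not any inequality between $S$ and its neighbours but the \emph{selection rule}: there the $``(''$ from $S_\downarrow$ immediately precedes and pairs with the $``)''$ from $S$, so that $``)''$ cannot be the last character of the last right form, and $f_i'$ would never choose this cell.

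Your closing parenthetical does gesture at the selection rule, but it only discusses ``cells weakly after $S$ in column order,'' which misses $S_\downarrow$ entirely --- the cell below $S$ is read \emph{before} $S$ in the column order, and it is precisely the case the paper handles first (if $i+1 \in S_\downarrow$, a $``(''$ immediately precedes $r$ and pairs with it, contradicting that $r$ ends the last right form). The argument for $S_\rightarrow$ is also not purely local: the paper shows that if $i \in S_\rightarrow$, then --- using the already-established fact that no cell below $S_\rightarrow$ contains $i+1$ --- the $``)''$ contributed by $S_\rightarrow$ is unpaired, hence heads a right or combined form occurring after $r$, contradicting that $r$ closes the \emph{last} right form of a word with no combined form. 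Both steps require tracking the global bracket structure, not just min/max comparisons between adjacent cells, so the central step of the lemma is genuinely missing from your write-up. (A minor additional slip: $\max S'$ need not equal $i+1$, since $S$ may contain elements larger than $i+1$; in that case the neighbour checks are vacuous, but your case split should account for it.)
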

\begin{proof}
Assume $f_i'(T) \neq \emt$ 
and consider what $f'$ does
on the $i$-word of $T$.
\begin{itemize}
\item
Assume $f'$ changes 
a $``)''$ into a $``)-(''$.
Let $r$ denote this $``)''$.
In the $i$-word of $T$,
we know there is no combined form
and $r$ is the last character
of the last right form.
Let $S$ be the entry in $T$ 
corresponding this $r$.
The operator $f_i'$ will put
$i+1$ into $S$.
Let $S_\downarrow$ be the entry below $S$
and $S_\rightarrow$ be the entry on the 
right of $S$.
We need to check
(1) $S_\downarrow$, if exists, has no $i+1$; 
(2) $S_\rightarrow$, if exists, has no $i$.

\begin{itemize}
\item Assume (1) is false. 
In the $i$-word of $T$,
there is a $``(''$ immediately before $r$.
Then $r$ cannot be the last character of a 
right form. 
Contradiction.
\item Assume (2) is false.
Since there is no $i+1$ in $S_\downarrow$
if it exists, 
there are no $i+1$ below $S_\rightarrow$.
We know $S_\rightarrow$ corresponds to $``)-(''$
or $``)''$.
In either case, the $``)''$ is unpaired.
It must be part of a right form 
or a combined form.
However, there is no combined form
or right form after $r$.
Contradiction. 
\end{itemize}

\item 
Assume $f'$ changes a $``)-(''$ into $``(''$.
Then $f_i'$ removes $i$
from a set containing both $i$ and $i+1$. 
It will not cause any violation.
\end{itemize}
\end{proof}

\begin{defn}
Define $f_i : \B_n \rightarrow \B_n 
\sqcup \{ \emt \}$ 
as $f_i(T) = f_i'(f_i'(T))$.
Equivalently, 
we may define $f_i(T)$ as: 
Apply $f$ on the $i$-word of $T$
and change $T$ accordingly.
\end{defn}

\begin{exa}
\label{E: f_i}
Following Example~\ref{E: f_i'},
we have
$$T = 
\raisebox{0.3cm}{
\begin{ytableau}
1 & 1 & 2 & 23\cr
2 & 23\cr
34 \cr 
\end{ytableau}}
\quad \xrightarrow{f_2}  \quad 
\raisebox{0.3cm}{
\begin{ytableau}
1 & 1 & 23 & 3\cr
2 & 23\cr
34 \cr 
\end{ytableau}}
=f_2(T).
$$
\end{exa}

We can define $e_i'$ and $e_i$ 
on $\B_n$ similarly.

\begin{defn}
Define $e_i'$ on $\B_n \sqcup \{\emt\}$.
First, $e_i'(\emt) = \emt$.
Now take $T \in \B_n$.
Apply $e'$ on the $i$-word of $T$.
Change $T$ accordingly and obtain $e_i'(T)$. 
Then $e_i(T) := e_i'(e_i'(T))$.
\end{defn}

Similar to Lemma~\ref{L: f_i' well-defined},
we can show $e_i'$ is also well-defined. 

\begin{rem}
When we restrict $f_i, e_i, \varphi_i$ 
and $\varepsilon_i$ on a SSYT, 
we obtain the classical construction
described in \S\ref{S: Background}.
\end{rem}

\begin{lemma}
\label{L: Kashiwara on SVT}
$\B_n$, together with $f_i, e_i,
\varepsilon_i, \varphi_i$ and $\wt$,
is a seminormal abstract Kashiwara
$\textrm{GL}_n$-crystal.
\end{lemma}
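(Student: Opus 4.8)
The plan is to verify the three axioms (K1), (K2), and seminormality, reducing everything to statements about the action of $f$, $e$, $f'$, $e'$ on the $i$-word. The key observation is that the maps $f_i, e_i, \varphi_i, \varepsilon_i$ are all defined purely through the $i$-word of a tableau, so the problem splits into two parts: (a) a purely word-combinatorial part about $\B_\word$, which is mostly recorded already in the lemma relating $f,e,f',e'$ and in Remark~\ref{R: How f' works}; and (b) the bookkeeping that translates changes in the $i$-word back into changes of $\wt(T)$ and checks that $f_i$ and $e_i$ genuinely land in $\B_n\sqcup\{\emt\}$. For part (b), well-definedness of $f_i'$ is Lemma~\ref{L: f_i' well-defined}, and the analogous (asserted) statement for $e_i'$ handles $e_i$; since $f_i = f_i'\circ f_i'$ and $e_i = e_i'\circ e_i'$, the operators $f_i, e_i : \B_n \to \B_n \sqcup \{\emt\}$ are well-defined.

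First I would prove (K1). For the equivalence $e_i(X) = Y \iff f_i(Y) = X$: on the level of $i$-words this is exactly the statement $f(w_1) = w_2 \iff e(w_2) = w_1$ from the displayed lemma, so one checks that the tableau operation and the word operation commute with taking the $i$-word, which is immediate from the definitions of $f_i$ and $e_i$. Then I would compute the effect on $\varepsilon_i, \varphi_i, \wt$. By definition $f$ transforms the last right form into a left form (possibly shifting the combined form), so it decreases the number of right forms by $1$ and increases the number of left forms by $1$; hence $\varepsilon_i(Y) = \varepsilon_i(X) - 1$ and $\varphi_i(Y) = \varphi_i(X)+1$. For the weight: by Remark~\ref{R: How f' works}, each application of $f'$ either removes one $i$ or adds one $i+1$ from some entry, so $f_i = f'_i \circ f'_i$ changes $\wt$ by exactly $-v_i + v_{i+1} \cdot(\text{one of these})$... more precisely it removes one $i$ and adds one $i+1$ in total (two applications, one of each type, by the structure of $f$ acting on right/combined forms), giving $\wt(Y) = \wt(X) + v_{i+1} - v_i$. (One must check both $f'$ steps together achieve exactly this net change; this follows because $f$ turns one $``)"$-bearing class into a $``("$-bearing class.)

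Next I would prove (K2), $\varphi_i(X) = \langle \wt(X), v_i - v_{i+1}\rangle + \varepsilon_i(X)$. Here $\langle \wt(X), v_i - v_{i+1}\rangle = \wt(X)_i - \wt(X)_{i+1}$, which equals (number of sets with $i$ and not $i+1$) minus (number of sets with $i+1$ and not $i$) $= (\#\,``)" ) - (\#\,``(")$ in the $i$-word, counting the $``)-("$ as contributing one of each. So (K2) becomes: in any $w \in \B_\word$, $(\#\text{right forms}) - (\#\text{left forms}) = (\#\,``)") - (\#\,``(")$. This is the standard fact that pairing $``("$ with $``)"$ leaves exactly the unpaired $``)"$s (one at the head of each right form and the one head of a combined form) and unpaired $``("$s (one at the tail of each left form and the one tail of a combined form); the combined form contributes one unpaired $``)"$ and one unpaired $``("$, cancelling in the difference, so the net is $(\#\text{right}) - (\#\text{left})$. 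I would state this as a short lemma about $\B_\word$ and prove it by the usual bracket-matching argument. Finally, seminormality — $\varepsilon_i(X) = \max\{k : e_i^k(X) \neq \emt\}$ and $\varphi_i(X) = \max\{k : f_i^k(X) \neq \emt\}$ — follows because applying $f$ repeatedly converts right forms into left forms one at a time and stops exactly when no right form remains (after $\varphi_i(X)$ steps), and symmetrically for $e$; this is visible directly from the definitions of $f$ and $e$ on $\B_\word$.

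The main obstacle I expect is the weight computation in (K1): one must carefully track that the two successive applications of $f_i'$ hidden inside $f_i$ produce a net change of exactly $-v_i + v_{i+1}$, i.e. one $``)" \to ``)-("$ step (adding an $i+1$) and one $``)-(" \to ``("$ step (removing an $i$), rather than, say, two additions or two removals. This is forced by the structure of $f$ acting on the last right form together with the combined form, but spelling it out cleanly — and confirming it agrees with the direct "apply $f$ to the $i$-word" description of $f_i$ — is the part that requires genuine care; everything else is routine bracket-matching and bookkeeping. It will also be worth remarking that $\varepsilon_i, \varphi_i$ never take the value $-\infty$ on $\B_n$, so the weaker $\mathbb{Z}\sqcup\{-\infty\}$ codomain in the definition causes no trouble here.
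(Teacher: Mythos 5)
Your proposal follows essentially the same route as the paper's proof: verify K1 via the word-level correspondence and Remark~\ref{R: How f' works}, verify K2 by counting unpaired parentheses within each equivalence class (each right form contributes one extra $``)''$, each left form one extra $``(''$, and combined and null forms balance), and deduce seminormality from the fact that each application of $e_i$ (resp.\ $f_i$) removes exactly one left (resp.\ right) form. One small correction to your K1 weight computation: since the convention is $f_i(Y)=X$, the net effect ``remove one $i$, add one $i+1$'' describes the passage from $Y$ to $X$, so $\wt(X)=\wt(Y)-v_i+v_{i+1}$, i.e.\ $\wt(Y)=\wt(X)+v_i-v_{i+1}$ as K1 requires; your displayed formula $\wt(Y)=\wt(X)+v_{i+1}-v_i$ has the roles of $X$ and $Y$ swapped, though the underlying reasoning is right.
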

\begin{proof}
First, we check the axioms
of an abstract Kashiwara
crystal.
\begin{itemize}
\item K1: Take $X, Y \in \B_n$.
Clearly, $e_i(X) = Y$ if and only if
$f_i(Y) = X$. 
Now assume this is the case.
The $i$-word of $Y$ has one more right form 
and one less left form
than the $i$-word of $X$,
so 
$\varphi_1(Y) 
= \varphi_1(X) + 1$ and 
$\varepsilon_1(Y) 
= \varepsilon_1(X) - 1$.

Finally, by Remark~\ref{R: How f' works},
the $i$-word of $Y$ has one more $``)''$
and one less $``(''$ than
the $i$-word of $X$.
In other words, 
$Y$ has one more $i$
and one less $i+1$ than $X$,
so $\wt(Y) = \wt(X) + v_i - v_{i+1}$.

\item 
K2: 
Take $X \in \B_n$.
The expression 
$\langle \wt(X), (v_i,-v_{i+1}) \rangle$
is the number of $i$ in $X$
minus the number of $i+1$ in $X$.
Thus, 
it is also the number of $``)''$ in $w$
minus the number of $``(''$ in $w$,
where $w$ is the $i$-word of $X$.

In each right form,
there is one more $``)''$ than $``(''$.
In each left form, 
there is one more $``(''$ than $``)''$.
In each combined form or null form,
the numbers of $``(''$ and $``)''$ 
are equal.
Thus, $\langle \wt(X), (v_i,-v_{i+1}) \rangle$
is also the number of right forms in $w$
minus the number of left forms in $w$,
which is~$\varphi_1(X) - \varepsilon_1(X)$. 
\end{itemize}

Next, we check it is seminormal.
Take $X \in \B_n$.
Each time we apply $e_i$,
the $i$-word of $X$ would lose one left form.
Thus, $e_1^{\varepsilon_i(X)}(X)$
has no left form.
We have 
$\varepsilon_i(X) 
= \max\{k: e_i^k(X) \neq \emt \}$.
The other equality can be proved similarly.

\end{proof}

\subsection{Double \boldmath{$i$}-strings}

In this subsection, 
we introduce and investigate double $i$-strings,
which can be viewed as analogues
of $i$-strings. 
Recall that an $i$-string in $\B_n$
is a sequence of SVT $T_0, \dots, T_k$
such that $e_i(T_0) = f_i(T_k) = \emt$ 
and $f_i(T_j) = T_{j+1}$
for $j = 0,1, \dots, k-1$.

\begin{exa}
The following are $2$-strings in $\B_3$.
\[
\begin{tikzcd}[ampersand replacement=\&]
\begin{ytableau}
1 & 2\cr
2 
\end{ytableau}\arrow[r, "2"]  \& 
\begin{ytableau}
1 & 3\cr
2 
\end{ytableau} \arrow[r, "2"]\& 
\begin{ytableau}
1 & 3\cr
3 
\end{ytableau}
\\
\begin{ytableau}
1 & 23\cr
2 
\end{ytableau} \arrow[r, "2"] \&
\begin{ytableau}
1 & 3\cr
23 
\end{ytableau}
\end{tikzcd}
\]
\end{exa}

Now we are ready to introduce 
the double $i$-string. 
 
\begin{defn}
Take $i \in [n-1]$.
A {\em double $i$-string} is a sequence
of $SVT$ $T_0, \dots, T_k \in \B_n$
satisfying:
\begin{enumerate}
\item[$\bullet$] $e_i'(T_0) = f_i'(T_k) = \emt$
\item[$\bullet$] $f_i'(T_j) = T_{j+1}$
for each $j \in \{0, 1, \dots, k-1 \}$.
\end{enumerate}
We say $T_0$ is the {\em source} of its double $i$-string.
Diagrammatically, we can represent the double $i$-string as:
\[
\begin{tikzcd}
T_0 \arrow[r,"i"] \arrow[d,dashrightarrow,"i"] & 
T_2 \arrow[r,"i"] \arrow[d,dashrightarrow,"i"] & 
T_4 \arrow[r,"i"] \arrow[d,dashrightarrow,"i"] &
\cdots \arrow[r,"i"] & 
T_{k-2} \arrow[r,"i"] \arrow[d,dashrightarrow,"i"] & 
T_k,
\\
T_1 \arrow[ur,dashrightarrow,"i"] \arrow[r,"i"] &
T_3 \arrow[ur,dashrightarrow,"i"] \arrow[r,"i"] & 
T_5 \arrow[r,"i"] & 
\cdots \arrow[r,"i"] & 
T_{k-1} \arrow[ur,dashrightarrow,"i"]
\end{tikzcd}
\]
where solid arrow represents $f_i$ 
and dash arrow represents $f_i'$.
\end{defn}

\begin{rem}
A double $i$-string can be viewed as a refinement
of the ``$i$-K-string'' in ~\cite{MPS}.
If we remove all dash arrows except the one
from $T_0$ to $T_1$,
we get an $i$-K-string.
\end{rem}

We make some basic observations about
a double $i$-string. 

\begin{lemma}
\label{L: Double i string}
Let $T_0, \dots, T_k$ be a double $i$-string.
Then we have the following.
\begin{enumerate}
\item $k$ is even.
\item If $k \geq 2$,
then this double $i$-string consists of
two $i$-strings: $T_0, T_2, \dots, T_k$
and $T_1, T_3, \dots, T_{k-1}$.
An element in the former $i$-string
has no combined form in its $i$-word.
An element in the latter $i$-string
has a combined form in its $i$-word.
\item $\wt(T_{2j+1}) = \wt(T_{2j}) + v_{i+1}$.
\item $\wt(T_{2j}) = \wt(T_{2j-1}) - v_i$.
\end{enumerate}
\end{lemma}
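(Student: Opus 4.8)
The plan is to analyze the combinatorial behavior of the auxiliary operators $f', e'$ on words in $\B_\word$ and transfer the conclusions to $\B_n$ via the $i$-word. The key structural fact, established in the discussion preceding the definition of $f$, is that after deleting null-forms an $i$-word reads as a block of right forms, then at most one combined form, then a block of left forms; and $f'$ toggles between ``no combined form, with a trailing right form available'' and ``exactly one combined form''. First I would make this precise as an invariant: starting from the source $T_0$ (which has no combined form, since $e_i'(T_0) = \emt$ forces no left form, but more importantly $f_i'(T_0) \ne \emt$ means there is a right form), each application of $f_i'$ alternately creates and destroys the combined form. This immediately gives part~(1): since $f_i'(T_k) = \emt$, the $i$-word of $T_k$ has no right form and no combined form, so in particular $T_k$ has no combined form; but the combined-form-presence alternates along $T_0, T_1, \dots, T_k$ starting from ``absent'' at $T_0$, hence it is absent exactly at the even indices, and $T_k$ being even-index-free of a combined form forces $k$ to be even.

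For part~(2), assuming $k \ge 2$: by the alternation just described, $T_0, T_2, \dots, T_k$ are exactly the terms with no combined form in their $i$-word, and $T_1, T_3, \dots, T_{k-1}$ are exactly the terms with a combined form. By the lemma relating $f, e, f'$ (namely $f = f' \circ f'$ and $f_i = f_i' \circ f_i'$ by definition), we have $f_i(T_{2j}) = T_{2j+2}$ for $0 \le j \le k/2 - 1$. To see this is a genuine $i$-string I must check the endpoints: $e_i(T_0) = \emt$ is given; and $f_i(T_k) = f_i'(f_i'(T_k)) = f_i'(\emt) = \emt$. Similarly $f_i(T_{2j+1}) = T_{2j+3}$ for the odd chain, with $f_i(T_{k-1}) = f_i'(f_i'(T_{k-1})) = f_i'(T_k) = \emt$ and $e_i(T_1) = e_i'(e_i'(T_1)) = e_i'(T_0) = \emt$ using $e_i'(T_0) = \emt$ from the double-string axiom and $e_i'(T_1) = T_0$ from the $f'$–$e'$ inversion lemma. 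The claim that every element of the first chain has no combined form and every element of the second chain does is exactly the alternation invariant.

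For parts~(3) and~(4), I would use Remark~\ref{R: How f' works}: when $f'$ changes a $``)''$ into $``)-(''$ it adds an $i+1$ somewhere (increasing the count of $``(''$-contributing cells appropriately), and when it changes a $``)-(''$ into $``(''$ it deletes an $i$. Translating through the description of $f_i'$ on tableaux: the step $T_{2j} \to T_{2j+1}$ creates a combined form, which by the alternation is the ``$``)'' \mapsto ``)-(''$'' case, so $f_i'$ adds an $i+1$ to some cell, giving $\wt(T_{2j+1}) = \wt(T_{2j}) + v_{i+1}$. The step $T_{2j-1} \to T_{2j}$ destroys a combined form, the ``$``)-(''\mapsto ``(''$'' case, so $f_i'$ removes an $i$ from some cell, giving $\wt(T_{2j}) = \wt(T_{2j-1}) - v_i$. (As a consistency check, adding these gives $\wt(T_{2j+1}) = \wt(T_{2j-1}) + v_{i+1} - v_i$, matching that $f_i = f_i' f_i'$ is a weight-$({-v_i+v_{i+1}})$ operation along the odd $i$-string, in agreement with (K1).)

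The main obstacle is part~(1) together with the ``alternation'' backbone it rests on: one must argue carefully that $f_i'(T_j) \ne \emt$ for every $j < k$ forces the presence-or-absence of the combined form to strictly alternate, rather than, say, both disappearing simultaneously. This follows from Remark~\ref{R: How f' works} — $f'$ on a word with a combined form \emph{always} lands on a word with no combined form but now with (at least) the newly-formed right-to-combined chain's predecessor still a right form? — so I would state and prove a short sublemma: \emph{if $w$ has a combined form then so long as $f'(w) \ne \emt$ we have $f'(w)$ has no combined form, and if $w$ has no combined form and $f'(w)\ne\emt$ then $f'(w)$ has a combined form}, which is immediate from the two bullet points of Remark~\ref{R: How f' works}. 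Everything else is bookkeeping with the $f$–$e$–$f'$–$e'$ relations already recorded in the excerpt.
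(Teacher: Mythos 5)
Your proposal is correct and follows essentially the same route as the paper: the presence of a combined form alternates along the double $i$-string (absent at $T_0$ and $T_k$, which forces $k$ even and splits the string into the two $i$-strings), and parts (3)--(4) are read off from Remark~\ref{R: How f' works}. The only nit is your stated justification that $T_0$ has no combined form --- the direct reason is that $e'$ acts nontrivially on any word containing a combined form, so $e_i'(T_0)=\emt$ rules one out --- but this is exactly the content of your closing sublemma, so nothing is missing.
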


\begin{proof}
\begin{enumerate}
We know $T_0$ and $T_k$ have no combined forms
in their $i$-word.
By the definition $f_i'$,
the $i$-word of $T_{j+1}$ has a combined form 
if and only if
the $i$-word of $T_j$ has no combined form.
Thus, we have (1) and (2).
The other two statements follow
from Remark~\ref{R: How f' works}.
\end{enumerate}
\end{proof}

\begin{exa}
\label{E: Double 2-string}
Notice that the following is a double $2$-string
in $\B_3$:
\[
\begin{tikzcd}[ampersand replacement=\&]
\begin{ytableau}
1 & 2\cr
2 
\end{ytableau}\arrow[r, "2"]  \arrow[d,dashrightarrow,"2"]\& 
\begin{ytableau}
1 & 3\cr
2 
\end{ytableau} \arrow[r, "2"]\arrow[d,dashrightarrow,"2"]\& 
\begin{ytableau}
1 & 3\cr
3 
\end{ytableau} \: .
\\
\begin{ytableau}
1 & 23\cr
2 
\end{ytableau} \arrow[ur,dashrightarrow,"2"] \arrow[r, "2"] \&
\begin{ytableau}
1 & 3\cr
23 
\end{ytableau}\arrow[ur,dashrightarrow,"2"]
\end{tikzcd}
\]

This double $2$-string consists of two $2$-strings 
that appear in the previous example.
Observe that the three SVT in the first row
do not have combined forms in their $2$-words,
while the two SVT on the second row have.

\end{exa}

\subsection{Double \boldmath{$i$}-string and the right key}
This subsection investigates how
the right key is changed in a double $i$-string.
More explicitly, we prove:
\begin{lemma}
\label{L: double i-string and right key}
Let $T_0, T_1, \dots, T_{2k}$ be 
a double $i$-string in $\B_n$.
Assume $K_+(T_0) = \key(\alpha)$.
Then $\alpha_i \geq \alpha_{i+1}$, 
and there are two possibilities:
\begin{enumerate}
\item[$\bullet$] 
$K_+(T_1) = \dots = K_+(T_k) = \key(\alpha)$, or 
\item[$\bullet$] 
$K_+(T_1) = \dots = K_+(T_k) = \key(s_i \alpha)$.
\end{enumerate}
\end{lemma}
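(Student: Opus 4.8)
The plan is to track the right key column-by-column using Lemma~\ref{L: T_max}, which says column $j$ of $K_+(T)$ is $\emptyset \star \word(T_{\geq j})$. The first observation is that passing from $T_j$ to $T_{j+1} = f_i'(T_j)$ changes $\word(T)$ in a very controlled way: by Remark~\ref{R: How f' works}, $f_i'$ either inserts an $i+1$ into some set (changing a $``)''$ to $``)-(''$) or deletes an $i$ from some set (changing a $``)-(''$ to $``(''$). In either case exactly one entry of $\word(T)$ is either inserted, deleted, or has one letter $i/i{+}1$ replaced by the other. So I would first prove a stability lemma: for a word $w$, the set $\emptyset \star w$ is unchanged when we insert an $i+1$ immediately after an $i$ already present earlier, or delete an $i$ that is immediately followed (in the relevant sense) by an $i+1$ — more precisely, I want to show $\emptyset \star w = \emptyset \star w'$ where $w'$ differs from $w$ by the local modification $f_i'$ performs, provided the modification happens strictly to the right of column $j$ or involves a ``matched'' $)-($ pair. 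This is where the bracketing structure does the work: the unpaired $``)''$ and $``("$ govern whether $i$ or $i+1$ survives in $\emptyset \star w$.

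\medskip

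Next I would use Lemma~\ref{L: Double i string}(3),(4): along the double $i$-string the weight oscillates, $\wt(T_{2j+1}) = \wt(T_{2j}) + v_{i+1}$ and $\wt(T_{2j}) = \wt(T_{2j-1}) - v_i$. Since $K_+(\cdot)$ is a key and a key is determined by its weight via $\key(\cdot)$, to prove $K_+(T_j)$ is one of $\key(\alpha)$ or $\key(s_i\alpha)$ it suffices to control $\wt(K_+(T_j))$ in coordinates $i$ and $i+1$ (all other coordinates of $\wt(K_+(T_j))$ are forced to agree with those of $\alpha$, because the modification $f_i'$ only touches letters $i,i+1$, so columns of $T_{\geq j}$ change only in those letters, and $\emptyset \star$ of the word changes only in whether $i$ or $i+1$ appears — this needs a short argument via Remark~\ref{R: Knuth and star} and the local stability lemma). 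So the real content is: the number of columns of $K_+(T_j)$ containing $i$ plus the number containing $i+1$ is constant $= \alpha_i + \alpha_{i+1}$ along the string (the entry-count argument), and no column can contain both $i$ and $i+1$ simultaneously in a key-shaped tableau in the relevant rows unless... — actually since $K_+(T_j)$ is a key, in each column the set of entries is downward-closed from $n$ in a suitable sense, so $i$ is in column $c$ whenever $i+1$ is; hence $\alpha_i \geq \alpha_{i+1}$, giving the asserted inequality, and $\wt(K_+(T_j))_i + \wt(K_+(T_j))_{i+1}$ together with the key condition pins $K_+(T_j)$ to $\key(\beta)$ for some $\beta$ with $\{\beta_i,\beta_{i+1}\} = \{\alpha_i,\alpha_{i+1}\}$ as a multiset.

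\medskip

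To finish, I would argue \emph{monotonicity and then constancy}. Applying $f_i'$ can only move weight from the $(i+1)$-coordinate toward the $i$-coordinate of $T$ or vice versa, and I claim the corresponding effect on $K_+$ is monotone: once $K_+(T_j) = \key(s_i\alpha)$ it stays $\key(s_i\alpha)$, and it can only switch from $\key(\alpha)$ to $\key(s_i\alpha)$ at most once along the whole string. This is the natural ``all, none, or source'' phenomenon echoing \cite[Proposition~3.3.5]{K} and the promised Corollary~\ref{C: All, none, or source}. Concretely: since $K_+$ is a monotone function of $T$ with respect to the partial order generated by the moves (each $f_i'$ move makes $\word(T)$ ``more $i$-heavy'' in the $\star$-relevant positions), $K_+(T_0) \leq K_+(T_1) \leq \cdots$ entrywise among keys, and the only keys between $\key(\alpha)$ and $\key(s_i\alpha)$ in the $\leq$ order with the right total weight are these two. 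Combined with the stability lemma showing $K_+(T_1) = K_+(T_2) = \cdots = K_+(T_k)$ (the middle of the string is ``flat'' because after the first $f_i'$ nothing further about the unpaired-bracket structure in columns $\geq j$ changes), we get exactly the two stated cases.

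\medskip

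The main obstacle I anticipate is the local stability lemma for $\emptyset \star w$ under the $f_i'$-modification: I need to verify carefully that inserting $i+1$ right after a surviving $i$ (resp.\ deleting an $i$ that is ``shadowed'' by a later $i+1$) does not change $\emptyset \star w$, and crucially that this holds for the truncated words $\word(T_{\geq j})$ for \emph{all} $j$ simultaneously — the subtlety is that the modified entry lies in some column $c$, and for $j \leq c$ the modification is visible in $\word(T_{\geq j})$ while for $j > c$ it is not, so I must check the $j \leq c$ case really is a no-op (or produces exactly the $\alpha \leftrightarrow s_i\alpha$ swap) using the bracketing. Once that lemma is in hand, the weight bookkeeping via Lemma~\ref{L: Double i string} and the key condition make the rest essentially formal.
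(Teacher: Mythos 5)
Your overall strategy -- computing $K_+(T_j)$ column by column as $\emptyset \star \word((T_j)_{\geq c})$ and analyzing the local effect of each $f_i'/e_i'$ move on these words -- is exactly the paper's approach, and you correctly identify the ``stability lemma'' as the crux. However, there is one concretely false step and the crux is left unproven. The false step is your justification of $\alpha_i \geq \alpha_{i+1}$: you claim that because $K_+(T_j)$ is a key, each column contains $i$ whenever it contains $i+1$. Keys do not have this property: the columns of a key are nested left to right, but a single column need not be downward closed (e.g.\ $\key(0,3)$ has every column equal to $\{2\}$, which contains $2$ but not $1$, and indeed $\alpha_1 < \alpha_2$ there). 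The inequality $\alpha_i \geq \alpha_{i+1}$ is \emph{not} a property of keys; it is a consequence of $T_0$ being the source of its double $i$-string. Because $T_0$ has no left form and no combined form in its $i$-word, every unpaired bracket is a $``)''$, so for each $c$ the word $\word((T_0)_{\geq c})$ has the form $u_1\, i\, u_2$ or $u$ with $u_2, u$ free of $i$ and $i+1$; one then needs a separate fact about $\star$ (the paper's Lemma~\ref{L: Has i no i+1}) to conclude that any column of $K_+(T_0)$ containing $i+1$ also contains $i$.

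The second gap is that the stability claim itself -- that $f_i'$ applied to an odd-indexed $T_{2j-1}$ (which has a combined form) preserves $\emptyset \star \word(T_{\geq c})$ for every $c$, and that $e_i'$ applied to $T_{2j+1}$ preserves it except possibly at $j=0$, where the key can change by exactly $\alpha \mapsto s_i\alpha$ -- is the entire technical content of the lemma, and your sketch does not supply the mechanism. The point is not merely that the modified cell sits in column $\geq c$ or that the pair is ``matched''; what makes the argument work is the \emph{position of the combined form} in the $i$-word: it is preceded only by right forms and null forms (so in $u = u_1\, i\,(i+1)\, u_2$ the prefix $u_1$ either avoids $i,i+1$ or ends its $i/i{+}1$ content with an $i$ not followed by an $i+1$, forcing $(\emptyset\star u_1)\star i\,(i{+}1) = (\emptyset\star u_1)\star(i{+}1)$), and it is followed only by left forms and null forms (so $u_2$ either contains an $i+1$, which resynchronizes the two computations, or avoids $i,i{+}1$ entirely, which is precisely the case where the key can differ by $\partial_i$ of one column -- and this case occurs only when there is no left form, i.e.\ only at $T_1$). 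Your ``monotonicity then constancy'' paragraph asserts the conclusion of this analysis rather than deriving it; without the bracket-position argument there is nothing preventing the right key from changing at an interior step of the string.
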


\begin{exa}
Let $T_0, \dots, T_4$ 
be the double $2$-string of $\B_3$ 
in Example~\ref{E: Double 2-string}.
We have $K_+(T_0) = \key(\alpha)$
and $K_+(T_1) = \cdots = K_+(T_4) = \key(s_i \alpha)$,
where $\alpha = (1,2,0)$

\end{exa}

The main goal of this subsection is to prove
Lemma ~\ref{L: double i-string and right key}.
by investigating how $f_i'$ and $e_i'$
change the right key of a SVT whose $i$-string
has a combined form.
First, we need a few lemmas
about the $\star$ operator.
\begin{lemma}
\label{L: Has i no i+1}
Let $S$ be a finite subset of $\mathbb{Z}$.
Pick $i \in \mathbb{Z}$
and assume $w$ is a word of $\mathbb{Z}$
with no $i+1$.
Then if $S \star iw$ contains $i+1$, 
it must also contain $i$.
\end{lemma}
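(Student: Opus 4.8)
The plan is to track carefully what happens to the element $i+1$ as the word $iw$ acts on $S$, using the definition of the $\star$ action one character at a time. First I would dispose of the case in which $S$ already contains $i$: then after the first step $S \star i$ still contains $i$ (if $i \in S$, the largest element $\le i$ is $i$ itself, so $\star i$ replaces $i$ by $i$, leaving the set unchanged), and I would argue that once a set contains $i$, acting by a word $w$ with no letter equal to $i+1$ can never remove $i$ without simultaneously having removed $i$ at a step that inserts something in the gap — but since no letter is $i+1$, the only letter that could bump $i$ is a letter $m \ge i+1$, i.e. $m \ge i+2$, and such a letter bumps the largest element $\le m$, which is $> i$ whenever $i+1$ is present. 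So I should really prove the two statements ``$i \in S\star iw$'' and ``if $i+1 \in S\star iw$ then $i \in S\star iw$'' together, or reduce one to the other.

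The cleaner route, which I would actually carry out, is an induction on the length of $w$. Write $iw = iw'm$ where $w = w'm$. By the inductive hypothesis applied to $S \star i$ and the word $w'$ (which has no $i+1$), the set $S' := S \star i w'$ contains $i$, and if it contains $i+1$ then it contains $i$ — but we already know $i \in S'$, so the only content is: $i \in S'$. Now I examine $S' \star m$ where $m \ne i+1$. If $m \le i$, then $i+1 \in S'\star m$ iff $i+1 \in S'$ (the letter $m$ only affects elements $\le m \le i < i+1$), and $i \in S'\star m$ as well unless $m = i$ exactly — but if $m=i$ then $m'$, the largest element $\le i$ in $S'$, is $i$ itself since $i \in S'$, so $\star i$ leaves $i$ in place; hence $i \in S'\star m$ in all cases $m \le i$. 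If $m \ge i+2$, then the element bumped is the largest element of $S'$ that is $\le m$; since $i \in S'$, this bumped element is $\ge i$, and if moreover $i+1 \in S'$ the bumped element is $\ge i+1 > i$, so $i$ survives; and $i+1 \in S' \star m$ iff either $i+1 \in S'$ and $i+1$ was not the bumped element (i.e.\ $S'$ contains something strictly between $i+1$ and $m$, or $m > i+1$ so $i+1$ is bumped only if it is the largest $\le m$) — in any subcase where $i+1$ ends up in $S'\star m$, the preceding analysis shows $i$ also survives, and when $i+1$ is freshly inserted that only happens if $m = i+1$, excluded by hypothesis. This establishes both the base case (empty $w$: $S \star i$ contains $i$, vacuously satisfying the claim about $i+1$) and the inductive step.

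The main obstacle I anticipate is purely bookkeeping: making the case analysis on the last letter $m$ versus the relative positions of $i$, $i+1$, and the other elements of $S'$ exhaustive without it ballooning. The key simplification I would lean on is the observation that the hypothesis ``$w$ has no $i+1$'' is used exactly to rule out $i+1$ ever being \emph{inserted} into the running set; it can only be \emph{bumped out} or \emph{kept}. So the real content reduces to: a bump caused by a letter $m \ne i+1$ that removes $i+1$ must also, by the ``largest element $\le m$'' rule, have the property that $m \ge i+1$, hence $m \ge i+2$, hence the bumped element $i+1$ was the largest element $\le m$, which forces every element of the running set in the interval $(i+1, m]$ to be absent — in particular this is consistent with $i$ remaining. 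Conversely, whenever $i+1$ is present in the output, one checks $i$ is too, because any step that would remove $i$ is a bump by some $m' \ge i+1$ hitting the largest element $\le m'$, and with $i+1$ present that largest element is $\ge i+1 > i$. Once this ``$i+1$ shields $i$ from bumps'' principle is isolated, the proof is a short induction rather than a long calculation.
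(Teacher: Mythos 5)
Your overall strategy --- peel off the letters of $w$ one at a time, and observe that a letter $m \neq i+1$ can never \emph{insert} $i+1$ into the running set but only keep it or bump it out, while a present $i+1$ shields $i$ from being bumped --- is exactly the idea behind the paper's proof. However, the induction as you have written it rests on a false auxiliary claim. You assert that $S' := S \star i w'$ ``contains $i$'' unconditionally, and you lean on ``since $i \in S'$'' in the case analysis. This is not true in general: take $S = \emptyset$, $i = 0$ and $w' = 5$; then $S \star i w' = (\emptyset \star 0) \star 5 = \{0\} \star 5 = \{5\}$, which does not contain $0$. The inductive hypothesis you are entitled to is only the conditional statement of the lemma itself, namely that $i \in S'$ \emph{whenever} $i+1 \in S'$; the unconditional version fails as soon as some letter of $w'$ bumps $i$ after $i+1$ has already left the set (or was never there).

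Fortunately the error is localized and the repair is immediate: in your inductive step you only ever need $i \in S'$ in the subcases where $i+1 \in S' \star m$, and since $m \neq i+1$ cannot insert $i+1$, this forces $i+1 \in S'$, whereupon the genuine inductive hypothesis supplies $i \in S'$. With that substitution your case analysis on $m$ goes through: for $m < i$ the letter cannot reach $i$; for $m = i$ the letter bumps $i$ and re-inserts it; for $m \geq i+2$ the largest element of $S'$ that is $\leq m$ is at least $i+1$, so $i$ is not the element bumped. For comparison, the paper avoids the induction entirely: it notes that $S \star i$ always contains $i$, splits on whether $i+1 \in S \star i$ (if not, $i+1$ can never reappear since no letter of $w$ equals $i+1$, and the claim is vacuous), and in the remaining case observes that $w$ must bump $i+1$ out before any letter can touch $i$, so if $i+1$ survives to the end then so does $i$. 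Your final ``shielding'' paragraph is precisely this argument; I would present that directly in place of the flawed strengthened induction.
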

\begin{proof}
If $i+1 \notin S \star i$,
then $i+1 \notin S\star iw$ since
$w$ has no $i+1$.
We are done in this case.
Otherwise, $i, i+1 \in S \star i$.
When $w$ acts on $S\star i$,
to change the $i$,
it first needs to bump the $i+1$.
Thus, $i$ remains in $S \star iw$
if it contains $i+1$.
\end{proof}

\begin{defn}
Let $S$ a finite subset of $\mathbb{Z}$.
We define the set $\partial_i S$ 
according to the following cases
\begin{itemize}
\item If $i, i+1 \in S$, 
then $\partial_i S = S$.
\item If $i \not\in S$ and $i+1 \not\in S_1$,
then $\partial_i S = S$.
\item If $i \in S$ and $i+1 \not\in S_1$,
then $\partial_i S = S - \{i\} \sqcup \{i+1\}$.
\item If $i \notin S$ and $i+1 \in S_1$,
then $\partial_i S$ is undefined.
\end{itemize}
\end{defn}

\begin{lemma}
\label{L: Star changes i into i+1}
Let $S_1$ be a set such that
$\partial_i(S_1)$ is defined.
Let $S_2 = \partial_i S_1$.
Then we have the following. 
\begin{itemize}
\item For any $x \neq i$ or $i+1$, 
the set $S_2 \star x$ is $S_1 \star x $
or $\partial_i (S_1 \star x)$;
\item $S_2 \star (i+1) = S_1 \star (i+1)$.
\end{itemize}
\end{lemma}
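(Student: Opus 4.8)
The plan is to analyze each of the three defining cases for $\partial_i$ separately, since in two of them $S_2 = S_1$ and the statement is trivial. Concretely, if $i,i+1 \in S_1$ or if $i,i+1 \notin S_1$, then $S_2 = S_1 = \partial_i S_1$, so both bullet points hold without any computation. The only substantive case is $i \in S_1$, $i+1 \notin S_1$, where $S_2 = (S_1 - \{i\}) \sqcup \{i+1\}$. I will spend the bulk of the argument there, and I will handle the second bullet point (the action of $i+1$) in all cases at the same time.

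First I would settle the $S_2 \star (i+1)$ claim. In the trivial cases there is nothing to prove, so assume $i \in S_1$, $i+1 \notin S_1$. Acting by $i+1$ on $S_1$: the largest element of $S_1$ that is $\le i+1$ is $i$ itself (since $i+1 \notin S_1$), so $S_1 \star (i+1) = (S_1 - \{i\}) \sqcup \{i+1\} = S_2$. Acting by $i+1$ on $S_2$: now $i+1 \in S_2$, so the largest element $\le i+1$ is $i+1$, and $S_2 \star (i+1) = S_2$. Hence $S_2 \star (i+1) = S_2 = S_1 \star (i+1)$, as claimed.

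For the first bullet, again only the case $i \in S_1$, $i+1 \notin S_1$ needs work. Fix $x \ne i, i+1$. I would compare the ``bumped element'' $m'$ (the largest element $\le x$) computed inside $S_1$ versus inside $S_2$. The two sets $S_1$ and $S_2$ differ only at $i$ versus $i+1$, and every other element agrees. Split on where $x$ sits relative to $i, i+1$: (a) if $x < i$, then among elements $\le x$ the sets $S_1$ and $S_2$ are literally identical, so $m'$ is the same and $S_1 \star x$ and $S_2 \star x$ differ again only by swapping $i \leftrightarrow i+1$ — i.e.\ $S_2 \star x = \partial_i(S_1 \star x)$ (noting $i$ is not removed since $i \ne m'$, and $i \in S_1 \star x$, $i+1 \notin S_1 \star x$, so $\partial_i$ is defined on $S_1 \star x$); (b) if $x \ge i+1$ is impossible since $x \ne i+1$ and $x \ge i+2$ would put $i$ as a candidate in $S_1$ but $i+1$ as a (possibly larger) candidate in $S_2$ — here I would check that in $S_1$ the bump target is some $m_1' \ge i$ while in $S_2$ it is $\max(m_1', i+1)$, and verify that after bumping, the resulting sets coincide, giving $S_2 \star x = S_1 \star x$; (c) the boundary subcase $x = i+2$ etc.\ folds into (b). The bookkeeping is: in $S_2 \star x$ the element $i+1$ either survives (then we land in $\partial_i(S_1 \star x)$) or gets bumped by $x$ (then $x$ replaces it, and since in $S_1$ the same $x$ bumped $i$, the outputs match exactly, giving $S_1 \star x$).

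The main obstacle is purely the case analysis in subcase (b): one must track carefully, when $x$ lies weakly above $i+1$, whether the largest-element-$\le x$ in $S_1$ is $i$ or something strictly larger, and correspondingly in $S_2$ whether it is $i+1$ or that same larger element, and then confirm the post-$\star$ sets agree on the nose (so we get $S_1\star x$ rather than its $\partial_i$-image). No single step is deep; the risk is an off-by-one slip in comparing ``$\le x$'' thresholds, so I would organize the proof as a short table indexed by the position of $x$ in the common part of $S_1, S_2$ and verify each row by direct substitution into the definition of $\star$.
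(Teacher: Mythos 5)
Your proposal is correct and follows essentially the same route as the paper: reduce to the case $i \in S_1$, $i+1 \notin S_1$, then split on whether $x$ bumps $i$ (giving $S_2 \star x = S_1 \star x$) or bumps something else / adds itself (giving $S_2 \star x = \partial_i(S_1 \star x)$), with the $i+1$ case handled by noting it bumps $i$ in $S_1$ and $i+1$ in $S_2$. The paper's version is just a more compact phrasing of the same case analysis, and your closing ``bookkeeping'' paragraph correctly captures the dichotomy even though the intermediate sentence in your subcase (b) momentarily suggests only the $S_1 \star x$ outcome.
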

\begin{proof}
If $S_1 = S_2$, then clearly
$S_2 \star x = S_1 \star x$
and $S_2 \star (i+1) = S_1 \star (i+1)$.
Now assume $S_1 \neq S_2$
(i.e. $i \in S$ and $i+1 \notin S$).
We know $S_2$ is obtained by 
changing the $i$ in $S_1$ into $i+1$.
We check the two statements.
\begin{itemize}
\item If $x$ bumps some $y \neq i$ in $S_1$
or adds itself to $S_1$,
then $x$ would do the same in $S_2$, so
$S_2 \star x = \partial_i (S_1 \star x)$.
Now if $x$ bumps $i$ in $S_1$,
then it would bump $i+1$ in $S_2$,
so $S_2 \star x = S_1 \star x$.
\item The $i+1$ must bump $i$ in $S_1$
and $i+1$ in $S_2$, so
$S_2 \star (i+1) = S_1 \star (i+1)$.
\end{itemize}
\end{proof}

\begin{lemma}
\label{L: f' and right key}
For $T \in B_n$
and $i \in [i-1]$,
$K_+(f_i'(T)) = K_+(T)$
if $T$ has a combined form
in its $i$-word.
\end{lemma}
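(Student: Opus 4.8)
The plan is to track how the $\star$ operator computes each column of $K_+$ under the move performed by $f_i'$, when the $i$-word of $T$ has a combined form. By Lemma~\ref{L: T_max}, column $j$ of $K_+(T)$ is $\emptyset \star \word(T_{\geq j})$, so it suffices to show that for every $j$, the word $\word(T_{\geq j})$ and the word $\word((f_i'(T))_{\geq j})$ give the same result when acting on $\emptyset$. Since $T$ has a combined form, Remark~\ref{R: How f' works} tells us $f_i'$ changes a single entry $S$ of $T$ that corresponds to the combined-form occurrence of $``)\text{-}(''$ in the $i$-word: it removes the $i$ from $S$, so the new entry is $S' = S - \{i\}$, which still contains $i+1$. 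In other words, $\word(T)$ and $\word(f_i'(T))$ differ exactly by deleting one character $i$ (the deletion happens inside the block $\word(S)$, where $i$ immediately precedes $i+1$ since $S \supseteq \{i,i+1\}$ and $S$ contains no integer strictly between them).

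First I would reduce to a statement about the $\star$-action of a word that starts with $i$ and has $i+1$ immediately after: writing $\word(T_{\geq j}) = u\, i\, (i+1)\, v$ where the shown $i,(i+1)$ are the two consecutive characters coming from the modified cell $S$, and $\word((f_i'(T))_{\geq j}) = u\,(i+1)\,v$, I want $\emptyset \star u\,i\,(i+1)\,v = \emptyset \star u\,(i+1)\,v$. (For columns $j$ to the right of the modified cell, the two words are literally equal and there is nothing to prove; for columns at or left of it, the modified cell lies in $T_{\geq j}$.) So the heart of the matter is: for any finite $R \subseteq \Z$, $R \star i\,(i+1) = R \star (i+1)$. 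This is where I would use the hypothesis that in the $i$-word, the occurrence sits in a \emph{combined form}, i.e. the $``)''$ of this $``)\text{-}(''$ is \emph{unpaired to the left}. By Lemma~\ref{L: star computes key}-style reasoning applied to the prefix $u$, being unpaired on the left translates into the statement that $i \in R := \emptyset \star u$ (the $``)''$ not being cancelled by an earlier $``(''$ forces the bracket count to leave an $i$ present); I would make this precise by the standard correspondence between the $i$-word bracketing and membership of $i,i+1$ in $\emptyset \star (\text{prefix})$, analogous to how the classical crystal $i$-word reads off from column words. Granting $i \in R$: then $R \star i = R$ (since $i$ bumps itself), hence $R \star i\,(i+1) = R \star (i+1)$, as desired. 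Then the identical tail word $v$ acts the same way on both, finishing the column.

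After establishing the single-cell/single-word identity, I would assemble the proof: apply it for each $j$ to conclude every column of $K_+(f_i'(T))$ equals the corresponding column of $K_+(T)$, i.e. $K_+(f_i'(T)) = K_+(T)$. Lemmas~\ref{L: Has i no i+1} and~\ref{L: Star changes i into i+1} are the intended technical tools here: Lemma~\ref{L: Star changes i into i+1} with $S_1 = S$, $S_2 = \partial_i S_1$ is exactly the statement that the change $S \rightsquigarrow S-\{i\}$ commutes appropriately with the $\star$-action of the remaining letters, and Lemma~\ref{L: Has i no i+1} handles the bookkeeping that no $i+1$ to the right spuriously appears or disappears. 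I would phrase the induction along $\word(T')$-suffixes using these two lemmas rather than the ad hoc ``$R \star i(i+1) = R \star (i+1)$'' computation, since they are already set up for precisely this commutation.

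\textbf{Expected main obstacle.} The delicate point is translating ``the $``)\text{-}(''$ lies in a combined form of the $i$-word'' into the algebraic input needed for the $\star$-computation — concretely, that $i$ is present in $\emptyset \star (\text{prefix up to that cell})$ and stays present through the relevant suffix action. The combined-form hypothesis is exactly what rules out the alternative (covered separately in the paper's development) where $f_i'$ instead changes a $``)''$ into $``)\text{-}(''$ and \emph{does} alter the key; so the proof must use it essentially, and I expect the bracketing-to-$\star$ dictionary to be the step requiring the most care. Everything after that — propagating the equality column by column and invoking Lemmas~\ref{L: Has i no i+1} and~\ref{L: Star changes i into i+1} — should be routine.
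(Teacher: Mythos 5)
Your reduction is exactly the paper's: compute column $j$ of $K_+$ via Lemma \ref{L: T_max}, write $\word(T_{\geq j}) = u_1\, i\, (i+1)\, u_2$ and $\word(f_i'(T)_{\geq j}) = u_1\, (i+1)\, u_2$ with the displayed $i,(i+1)$ coming from the modified cell $S$, and reduce to the identity $(\emptyset \star u_1) \star i\,(i+1) = (\emptyset \star u_1) \star (i+1)$. The gap is in how you discharge this identity. You claim the combined-form hypothesis forces $i \in R$, where $R = \emptyset \star u_1$, and then conclude via $R \star i = R$. That claim is false. Take $j = c$, the column containing $S$: the prefix $u_1$ consists of the words of the cells below $S$ in column $c$ (all of whose elements exceed $\max(S) \geq i+1$) followed by the elements of $S$ smaller than $i$; since every element of $\emptyset \star w$ is a character of $w$ and no character of $u_1$ equals $i$, we get $i \notin R$. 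The simplest instance is the one-cell tableau with entry $\{i,i+1\}$, where $R = \emptyset$. So the shortcut $R \star i = R$ is unavailable precisely in the column where the modification happens.

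What the combined-form hypothesis actually yields --- and what the paper extracts from it via Lemma \ref{L: Has i no i+1} after observing that the class preceding a combined form must end in $``)''$ or be absent --- is only the disjunction: either $i+1 \notin R$, or both $i, i+1 \in R$. The excluded bad case is $i+1 \in R$, $i \notin R$, where the identity genuinely fails since $R \star i\,(i+1) = R \star i \neq R = R \star (i+1)$. When $i, i+1 \in R$ your argument works. But when $i \notin R$ and $i+1 \notin R$ you need the two-step computation: the letter $i$ bumps the same element $m'$ of $R$ (or adds itself) that $i+1$ alone would have bumped (or added), and then $i+1$ bumps the newly inserted $i$, so the net effect of $i\,(i+1)$ on $R$ coincides with that of $(i+1)$. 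This is a short repair, but it is a different argument from the one you committed to, and without it the proof does not close. A secondary point: Lemma \ref{L: Star changes i into i+1} is not needed for the $f_i'$ statement --- once the two prefixes produce the same set, the common suffix $u_2$ acts identically on both sides --- that propagation lemma is the tool for the $e_i'$ analogue, where the two sets differ by $\partial_i$ after the modified cell and the difference must be tracked through $u_2$.
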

\begin{proof}
Assume $f_i'$ removes $i$ from
the entry $S$, 
which is in column $c$ of $T$.
Then clearly 
$K_+(T)$ and $K_+(f_i'(T))$
must agree on column $j$ 
if $j > c$.
We only need to worry about
column $j$ of 
$K_+(T)$ and $K_+(f_i'(T))$
for $j \leq c$.
Let $T_{\geq j}$ be the SVT 
obtained by removing the first 
$j-1$ columns of $T$.
Let $u = word(T_{\geq j})$.
Recall that column $j$ of $K_+(T)$
is $\emptyset \star u$.

We may write $u$ as $u_1 \:\:\: i \:\:\: i+1 \:\:\: u_2$,
where the $i$ and $i+1$ correspond to the 
$i$ and $i+1$ in $S$.
Then column $j$ of $K_+(f_i'(T))$
is $\emptyset \star u_1 \:\: (i+1) \:\: u_2$.
Thus, it remains to prove:
\begin{align}
\label{Equ: f_i'}
(\emptyset \star u_1) \star \:\: i \:\: (i+1)= 
(\emptyset \star u_1) \star \:\: (i+1)   
\end{align}

Now we consider the $i$-word of $T$. 
The combined form must follow
a right form or a null-form 
or nothing.
Thus, the character before the 
combined form must be $``)''$ 
or nothing.
In other words,
$u_1$ has two possibilities:
has neither $i$ nor $i+1$,
or has the form $u_1^1 \:\: i \:\: u_1^2$,
where $u_1^2$ has no $i+1$.
By Lemma ~\ref{L: Has i no i+1}, 
we have either $i+1 \not\in \emptyset \star u_1$ or
$i, i+1 \in \emptyset \star u_1$.
Now we study these two cases.
\begin{enumerate}
\item Assume we have the former case. 
If we let $i$ act on $\emptyset \star u_1$,
it will change a number into $i$,
or add itself to it. 
Then if we let $i+1$ act on the result, 
it will replace the $i$ by $i+1$,
which is the same as $(\emptyset \star u_1) \star (i+1)$.
\item Assume we have the latter case.
Action of $i$ or $i+1$ on $\emptyset \star u_1$
will not do anything. 
Both sides of~(\ref{Equ: f_i'})
must agree with $\emptyset \star u_1$.
\end{enumerate}
\end{proof}

Similarly, for $e_i'$, we have: 

\begin{lemma}
\label{L: e' and right key}
Take $T \in B_n$ and $i \in [i-1]$.
Assume $T$ has a combined form
in its $i$-string.
Assume $K_+(T) = \key(\alpha)$.
If $T$ also has a left form, 
then $K_+(e_i'(T)) = \key(\alpha)$.
If $T$ has no left form,
then $K_+(e_i'(T)) = \key(\alpha)$ 
or $\key(s_i \alpha)$.
\end{lemma}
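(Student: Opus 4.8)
\textbf{Proof proposal for Lemma~\ref{L: e' and right key}.}

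The plan is to reduce this statement to Lemma~\ref{L: f' and right key} by exploiting the fact that $f_i'$ and $e_i'$ are mutually inverse partial operations. Recall from the lemma on $f,e,f',e'$ that $f_i'(X) = Y$ if and only if $e_i'(Y) = X$. So I would like to argue as follows: suppose $T$ has a combined form in its $i$-word. By the transformation rules, $e_i'(T)$ then has \emph{no} combined form, and conversely if $e_i'(T) \neq \emt$ then $f_i'(e_i'(T)) = T$, where now $e_i'(T)$ has no combined form and $T$ does. The subtlety is that Lemma~\ref{L: f' and right key} requires the \emph{input} to $f_i'$ to have a combined form, which is not our situation here. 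So a direct appeal does not work, and I instead need to analyze the $\star$-action on the relevant column word directly, in parallel with the proof of Lemma~\ref{L: f' and right key}.

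Here is the concrete approach. Since $e_i'$ transforms the combined form of $T$'s $i$-word into a right form, it changes the first three characters $``)-(''$ of that combined form into $``)''$; correspondingly, on the tableau level it removes $i+1$ from the entry $S$ (in some column $c$) that carried both $i$ and $i+1$. As in the proof of Lemma~\ref{L: f' and right key}, columns $j > c$ of the right key are unaffected, so fix $j \le c$, set $T_{\geq j}$ to be $T$ with its first $j-1$ columns removed, and write $u = \word(T_{\geq j})$. We may write $u = u_1\, i\, (i+1)\, u_2$ where the displayed $i, i+1$ come from $S$. Column $j$ of $K_+(T)$ is $\emptyset \star u_1\, i\,(i+1)\, u_2$, while column $j$ of $K_+(e_i'(T))$ is $\emptyset\star u_1\, i\, u_2$. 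So I must compare $(\emptyset \star u_1)\star i\,(i+1)\,u_2$ with $(\emptyset\star u_1)\star i\,u_2$; the first differs from the second by the insertion of one extra $(i+1)$ right after an $i$ was inserted/bumped. The point is that in the first word, that $(i+1)$ immediately bumps the $i$ just placed, so $(\emptyset\star u_1)\star i\,(i+1) = \partial_i(\emptyset \star u_1 \star i)$ in the sense of Lemma~\ref{L: Star changes i into i+1} (or they are equal). Then Lemma~\ref{L: Star changes i into i+1} propagates this relationship through the remaining action of $u_2$: at each step the two running sets either stay equal or remain related by $\partial_i$, and the $\partial_i$ relation is exactly what produces either $\key(\alpha)$ (when the two columns coincide) or $\key(s_i\alpha)$ (when they differ by swapping an $i$ for an $i+1$ in one coordinate). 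The hypothesis on whether $T$ has a left form controls which alternative occurs: if there is a left form, then further unpaired $``(''$s downstream force a net cancellation and the columns agree, whereas with no left form the discrepancy can persist all the way to the last column, yielding the $s_i\alpha$ possibility.

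To make the ``left form controls the outcome'' step rigorous I would use the same case split on $u_1$ as in Lemma~\ref{L: f' and right key}: either $u_1$ contains neither $i$ nor $i+1$, or $u_1 = u_1^1\, i\, u_1^2$ with $u_1^2$ free of $i+1$, and hence (by Lemma~\ref{L: Has i no i+1}) either $i+1 \notin \emptyset \star u_1$ or $\{i,i+1\}\subseteq \emptyset\star u_1$. Combined with an analysis of where the unpaired $``)''$ of the combined form pairs off against later $``(''$s (which is precisely the left-form data), this pins down whether the $\partial_i$-discrepancy survives. The main obstacle I anticipate is bookkeeping: tracking the position of the extra $i+1$ through the $\star$-action while simultaneously keeping the $i$-word combinatorics (which characters are paired, which form each class has) synchronized with the tableau picture. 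Lemma~\ref{L: Star changes i into i+1} is the right tool to tame this, but applying it cleanly requires checking that $\partial_i$ is always defined at each intermediate stage, i.e. that we never create an $i+1$ without an accompanying $i$ in the running set — which should follow from Lemma~\ref{L: Has i no i+1} and the no-$i+1$ property of the relevant sub-words. Once that invariant is in hand, the two stated alternatives $\key(\alpha)$ and $\key(s_i\alpha)$ fall out as the only two ways the final columns can relate, and the left-form hypothesis selects $\key(\alpha)$.
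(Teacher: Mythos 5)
Your proposal follows essentially the same route as the paper's proof: reduce to comparing $(\emptyset\star u_1)\star i\,(i+1)\,u_2$ with $(\emptyset\star u_1)\star i\,u_2$, observe that the two running sets are related by $\partial_i$ after the initial $i\,(i+1)$ versus $i$ step, propagate this via Lemma~\ref{L: Star changes i into i+1}, and note that a downstream $i+1$ (which is exactly what a left form after the combined form provides) collapses the $\partial_i$ discrepancy, while its absence leaves the $\key(s_i\alpha)$ alternative. One correction to your third paragraph: the decisive case split is on $u_2$, not on $u_1$. Because the combined form ends in an unpaired $``(''$, the first letter of $u_2$ equal to $i$ or $i+1$ must be an $i+1$ coming from a set without $i$; so either $u_2 = u_2^1\,(i+1)\,u_2^2$ with $u_2^1$ free of $i$ and $i+1$ (forcing equality of the two columns), or $u_2$ has no $i$ or $i+1$ (which happens only when there is no left form). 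The $u_1$ case split you import from Lemma~\ref{L: f' and right key} is not needed here, since $i \in (\emptyset\star u_1)\star i$ holds unconditionally and that is all the argument requires.
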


\begin{proof}
Assume $e_i'$ removes $i+1$ from
the entry $S$, 
which is in column $c$ of $T$.
Then clearly column $j$ of 
$K_+(T)$ and $K_+(e_i'(T))$
must agree if $j > c$.
We only need to worry about
column $j$ of 
$K_+(T)$ and $K_+(f_i'(T))$
for $j \leq c$.
Let $T_{\geq j}$ be the SVT 
obtained by removing the first 
$j-1$ columns of $T$.
Let $u = word(T_{\geq j})$.
Recall that column $j$ of $K_+(T)$
is $\emptyset \star u$.

We may break $u$ into 
$u_1 \:\:\: i \:\:\: (i+1) \:\:\: u_2$,
where the $i$ and $i+1$ correspond to the 
$i$ and $i+1$ in $S$.
Then column $j$ of $K_+(e_i'(T))$
is $\emptyset \star u_1 \:\: i \:\: u_2$.
Thus, it remains to compare:
$$
(\emptyset \star u_1) \star \:\: i \:\: (i+1) \:\: u_2 
\text{ and }
(\emptyset \star u_1) \star \:\: i \:\: u_2.
$$

Let $S_1 = (\emptyset \star u_1) \star i$ 
and $S_2 = (\emptyset \star u_1) \star i \: (i+1)$.
Clearly, $i \in S_1$.
If $i+1 \in S_1$, then $i, i+1 \in S_1$ and $S_1 = S_2$.
If $i+1 \not\in S_1$, then 
$S_2 = S_1 - \{i\} \sqcup \{i+1\}$.
In either case, 
we have $S_2 = \partial_i S_1$.

Now we think about the $i$-word of $T$. 
The combined form must be followed by
a left form or a null-form 
or nothing.
Thus, the character after the 
combined form must be $``(''$ 
or nothing.
In other words,
we have two cases.
\begin{itemize}
\item Case 1: The word $u_2$ can be written as
$u_2^1 \:\: (i+1) \:\: u_2^2$.
By Lemma ~\ref{L: Star changes i into i+1},
$S_2 \star u_2^1 = \partial_i(S_1 \star u_2^1)$.
Then $S_2 \star u_2^1 \:\: (i+1)= S_1 \star u_2^1 \:\: (i+1)$,
so $S_2 \star u_2 = S_1 \star u_2$.
\item Case 2: The word $u_2$ has no $i$ or $i+1$.
By Lemma ~\ref{L: Star changes i into i+1},
$S_2 \star u_2 = \partial_i(S_1 \star u_2)$.
\end{itemize}
The second case is possible
only when the $i$-word of $T$ has no left form.
This is exactly what we need to prove.
\end{proof}

Now we are ready to prove 
Lemma ~\ref{L: double i-string and right key}.
\begin{proof}
[Proof of Lemma 
~\ref{L: double i-string and right key}]
First, we consider $T_0$.
Since it has neither combined form nor left form,
its last character in the $i$-string, if exists, 
must be $``)''$.
Thus, columns of $K_+(T_0)$ will be
$\emptyset \star u_1 \: i \: u_2$ or $\emptyset \star u$,
where $u_2$ and $u$ have no $i$ or $i+1$.
By Lemma ~\ref{L: Has i no i+1},
if a column of $K_+(T_0)$ has $i+1$,
it must also have $i$.
Thus, $\alpha_i \geq \alpha_{i+1}$.

Now by Lemma ~\ref{L: f' and right key},
we know $K_+(T_{2j-1}) = K_+(T_{2j})$
where $j \in [k]$.
By Lemma ~\ref{L: e' and right key}
we know $K_+(T_{2j}) = K_+(T_{2j+1})$
where $j \in [k]$.
Thus, $T_1, \dots, T_{2k}$ all have the same 
right key.

Finally, notice that $T_1$ is the 
source of its $i$-string,
so it has no left form. 
By Lemma ~\ref{L: e' and right key} again, 
$K_+(T_1) = \key(\alpha)$ or $\key(s_i \alpha)$,
where $\alpha = K_+(T_0)$.
\end{proof}

\begin{cor}
\label{C: Must be source}
Let $T$ be a SVT.
If $f_i(T) \neq \emt$ and 
$K_+(T) \neq K_+(f_i(T))$,
then $T$ must be the source of its double $i$-string. 
\end{cor}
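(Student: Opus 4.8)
The plan is to deduce Corollary~\ref{C: Must be source} directly from Lemma~\ref{L: double i-string and right key} together with the structure of a double $i$-string described in Lemma~\ref{L: Double i string}. First I would observe that $f_i(T) \neq \emt$ means $T$ lies in a double $i$-string $T_0, T_1, \dots, T_{2k}$ with $T = T_j$ for some $j < 2k$ and $f_i(T) = T_{j+2}$ (since $f_i = f_i' \circ f_i'$ shifts the index by two, using Lemma~\ref{L: Double i string}). Suppose for contradiction that $T$ is \emph{not} the source of its double $i$-string, i.e. $j \geq 1$. The goal is to show that then $K_+(T) = K_+(f_i(T))$, contradicting the hypothesis.

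The key step is to apply Lemma~\ref{L: double i-string and right key}. That lemma, applied to the full double $i$-string $T_0, \dots, T_{2k}$, tells us that $K_+(T_1) = K_+(T_2) = \cdots = K_+(T_{2k})$ — all of $T_1$ through $T_{2k}$ share a single right key (either $\key(\alpha)$ or $\key(s_i\alpha)$ where $K_+(T_0) = \key(\alpha)$). Since $j \geq 1$, both $T = T_j$ and $f_i(T) = T_{j+2}$ have indices in the range $\{1, 2, \dots, 2k\}$, hence $K_+(T_j) = K_+(T_{j+2})$, i.e. $K_+(T) = K_+(f_i(T))$. This is precisely the contradiction we need, so $j = 0$ and $T$ is the source.

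I do not expect any serious obstacle here; the corollary is essentially a repackaging of Lemma~\ref{L: double i-string and right key}. The one point requiring a little care is the bookkeeping on indices: I must confirm that $f_i$ genuinely moves two steps along the double $i$-string (so that $f_i(T_j) = T_{j+2}$ rather than $T_{j+1}$), which follows because $f_i(T) = f_i'(f_i'(T))$ and each $f_i'$ advances one step; and I must make sure that when $T$ is the source, $f_i(T) = T_2$ still makes sense, i.e. $k \geq 1$ whenever $f_i(T) \neq \emt$ — but that is automatic since $f_i(T) \neq \emt$ forces $2k \geq 2$. Beyond that the argument is immediate. One should also note the edge contribution from Lemma~\ref{L: double i-string and right key} stating $K_+(T_0)$ may differ from the common right key of $T_1, \dots, T_{2k}$: this is exactly the phenomenon the corollary isolates, and it is why the source is the only element whose right key can change under $f_i$.
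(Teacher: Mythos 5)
Your argument is correct and is exactly the intended derivation: the paper states the corollary without proof as an immediate consequence of Lemma~\ref{L: double i-string and right key}, whose conclusion (all of $T_1,\dots,T_{2k}$ share one right key, with only $T_0$ possibly differing) is precisely the contrapositive you use. Your index bookkeeping ($f_i = f_i'\circ f_i'$ advances two steps, and $f_i(T)\neq\emt$ forces $j\le 2k-2$) is the only detail worth writing down, and you handle it correctly.
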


\subsection{Proof of Theorem ~\ref{T: Main}}

In this subsection, we derive a few lemmas
and then use them to prove Theorem ~\ref{T: Main}. 
First, we describe a well-known result
that is implicit in~\cite{K}.
It states that the generating function of each
$i$-string behaves nicely under $\pi_i$.
For the sake of completeness, 
we provide a brief proof. 

\begin{lemma}
\label{L: Sum over an i-string}
For each $i$-string $T_0, \dots, T_{k}$,
we have
$$
\pi_i(x^{\wt(T_0)})
= \sum_{j=0}^{k} x^{\wt(T_j)}.
$$
\end{lemma}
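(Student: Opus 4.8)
The statement is the standard fact that $\pi_i$ acts on the generating function of an $i$-string by ``completing the string'' — applied to the monomial of the source $T_0$ it produces the sum of monomials over the whole string. The plan is to reduce everything to a one-variable computation in $x_i$ and $x_{i+1}$. By Lemma~\ref{L: string weight}, the $i$-string $T_0,\dots,T_k$ is seminormal, so if $\wt(T_0)=(\dots,a+k,a,\dots)$ in positions $i,i+1$, then $\wt(T_j)=(\dots,a+k-j,a+j,\dots)$, while all other entries of the weight are constant along the string. Writing $m$ for the monomial in the variables other than $x_i,x_{i+1}$ (which is fixed, hence pulled out of $\pi_i$ since $\pi_i$ is $\Z[\beta][\dots,x_{i-1},x_{i+2},\dots]$-linear), it suffices to prove
\[
\pi_i\!\left(x_i^{a+k}x_{i+1}^{a}\right) = \sum_{j=0}^{k} x_i^{a+k-j}x_{i+1}^{a+j}.
\]

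\textbf{Key steps.} First I would recall the definition $\pi_i(f)=\partial_i(x_i f)$ with $\partial_i(f)=(x_i-x_{i+1})^{-1}(f-s_i f)$, and note $\pi_i$ is $s_i$-invariant-coefficient-linear, so the reduction above is legitimate. Next I would compute directly: $x_i\cdot x_i^{a+k}x_{i+1}^{a} = x_i^{a+k+1}x_{i+1}^{a}$, and
\[
\partial_i\!\left(x_i^{a+k+1}x_{i+1}^{a}\right)
= \frac{x_i^{a+k+1}x_{i+1}^{a} - x_i^{a}x_{i+1}^{a+k+1}}{x_i-x_{i+1}}
= x_i^{a}x_{i+1}^{a}\cdot\frac{x_i^{k+1}-x_{i+1}^{k+1}}{x_i-x_{i+1}}.
\]
The geometric-series identity $\frac{x_i^{k+1}-x_{i+1}^{k+1}}{x_i-x_{i+1}} = \sum_{j=0}^{k} x_i^{k-j}x_{i+1}^{j}$ then gives $\sum_{j=0}^{k} x_i^{a+k-j}x_{i+1}^{a+j}$, as desired. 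Multiplying back by the pulled-out monomial $m$ and matching with $\wt(T_j)$ via Lemma~\ref{L: string weight} completes the proof.

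\textbf{Main obstacle.} There is essentially no hard step here — the computation is a three-line exercise once the weight bookkeeping from Lemma~\ref{L: string weight} is in place. The only point requiring a little care is the justification that $\pi_i$ commutes with multiplication by $x^{\mu}$ where $\mu$ is supported away from positions $i,i+1$; this follows because such an $x^\mu$ is $s_i$-invariant and $\partial_i$ is a twisted derivation that is linear over the $s_i$-invariants. I expect this to be the one line a referee might want spelled out, but it is routine, which is why the excerpt says a brief proof suffices.
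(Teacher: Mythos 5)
Your proposal is correct and follows essentially the same route as the paper: both reduce to the two variables $x_i,x_{i+1}$ via Lemma~\ref{L: string weight}, pull out the $s_i$-invariant monomial $m$, and evaluate $\pi_i$ on $x_i^{a+k}x_{i+1}^{a}$ (the paper simply asserts the resulting geometric-series identity, which you verify explicitly from $\pi_i=\partial_i\circ x_i$). The only nitpick is the phrase ``the $i$-string is seminormal'' --- seminormality is a property of the ambient crystal $\B_n$ (Lemma~\ref{L: Kashiwara on SVT}), not of an individual string --- but this does not affect the argument.
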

\begin{proof}
Write $x^{\wt(T_0)}$ as $m x_i^{a} x_{i+1}^{b}$,
where $m$ is a monomial with no $x_i$ or $x_{i+1}$.
By Lemma~\ref{L: string weight},
$x^{\wt(T_k)} = m x_i^{b} x_{i+1}^{a}$.
Thus, $k = b - a$.
Finally, we have 
\begin{equation*}
\begin{split}
\pi_i( x^{\wt(T_0)} ) 
= & \:m \pi_i(x_i^{a} x_{i+1}^{b} ) \\
= & \:m \sum_{j=0}^{b-a} x_i^{a-j} x_{i+1}^{b+j} \\
= & \:\sum_{j=0}^{k} x^{\wt(T_{j})}.\\
\end{split}
\end{equation*}
\end{proof}

As mentioned earlier, 
double $i$-string can be viewed
as a refinement of $i$-K-string in ~\cite{MPS}.
Authors of ~\cite{MPS} knew that
the generating function of an $i$-K-string 
behaves nicely under $\pib_i$:
Applying $\pib_i$ on the weight of the source
yields the generating function of a whole $i$-K-string. 
This property is also satisfied by 
double $i$-strings.
The following is implicit in~\cite[Theorem 7.5]{MPS}.

\begin{lemma}
\label{L: Sum over a double i-string}
For each double $i$-string $T_0, \dots, T_{2k}$,
we have
$$
\pib_i( x^{\wt(T_0)}\beta^{\ex(T_0)} )
= \sum_{j=0}^{2k} x^{\wt(T_j)} \beta^{\ex(T_j)},
$$
$$
\pib_i( \sum_{j=0}^{2k} x^{\wt(T_j)} \beta^{\ex(T_j)} )
= \sum_{j=0}^{2k} x^{\wt(T_j)} \beta^{\ex(T_j)}.
$$
\end{lemma}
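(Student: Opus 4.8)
The plan is to reduce the two identities about a double $i$-string to the single-string statement already available as Lemma~\ref{L: Sum over an i-string}. By Lemma~\ref{L: Double i string}, a double $i$-string $T_0,\dots,T_{2k}$ splits into two ordinary $i$-strings, the ``upper'' one $T_0,T_2,\dots,T_{2k}$ consisting of SVT whose $i$-word has no combined form, and the ``lower'' one $T_1,T_3,\dots,T_{2k-1}$ consisting of SVT whose $i$-word has a combined form. Crucially, $f_i'$ preserves $\ex(\cdot)$ when it turns a $``)''$ into a $``)-(''$ (it adds an $i+1$, increasing $|\wt|$ by one but leaving the shape alone \emph{only} if we also track that... actually $\ex$ changes), so first I would pin down exactly how $\ex$ behaves along the double string: by Lemma~\ref{L: Double i string}(3)--(4), $\wt(T_{2j+1}) = \wt(T_{2j}) + v_{i+1}$ and $\wt(T_{2j}) = \wt(T_{2j-1}) - v_i$, so $|\wt(T_{2j+1})| = |\wt(T_{2j})| + 1$ and $|\wt(T_{2j})| = |\wt(T_{2j-1})|$. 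Since the shape is constant along the whole string, this gives $\ex(T_{2j+1}) = \ex(T_{2j}) + 1 = \ex(T_0) + j + 1$ and $\ex(T_{2j}) = \ex(T_0) + j$. In particular every element of the lower $i$-string has $\ex$ value exactly one more than the corresponding element of the upper one, and the two ordinary $i$-strings have lengths $k$ and $k-1$ respectively (or the lower one is empty if $k=0$).

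Next I would compute $\pi_i^{(\beta)}$ on a monomial. Writing $x^{\wt(T_0)} = m x_i^a x_{i+1}^b$ with $m$ free of $x_i, x_{i+1}$, Lemma~\ref{L: string weight} applied to the upper $i$-string gives $x^{\wt(T_{2k})} = m x_i^b x_{i+1}^a$, so $k = b - a$; and the lower $i$-string starts at $x^{\wt(T_1)} = m x_i^a x_{i+1}^{b+1}$ and has length $k - 1$, ending at $m x_i^{b} x_{i+1}^{a+1}$. The operator identity I need is the explicit formula for $\pi_i^{(\beta)}$ on $x_i^a x_{i+1}^b$ with $a \le b$, namely
\[
\pi_i^{(\beta)}(x_i^a x_{i+1}^b) = \sum_{j=0}^{b-a} x_i^{a-j} x_{i+1}^{b+j} \;+\; \beta \sum_{j=0}^{b-a-1} x_i^{a-j} x_{i+1}^{b+1+j},
\]
which one derives directly from $\pi_i^{(\beta)} = \pi_i + \beta\,\partial_i(x_{i+1}\,\cdot\,)$ and Lemma~\ref{L: Sum over an i-string}'s computation $\pi_i(x_i^a x_{i+1}^b) = \sum_{j=0}^{b-a} x_i^{a-j} x_{i+1}^{b+j}$ together with the analogous evaluation of $\partial_i(x_{i+1} x_i^a x_{i+1}^b) = \partial_i(x_i^a x_{i+1}^{b+1}) = \sum_{j=0}^{b-a-1} x_i^{a-j} x_{i+1}^{b+1+j}$. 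Multiplying by $m$ and $\beta^{\ex(T_0)}$, the first sum is exactly $\sum_{j} x^{\wt(T_{2j})}\beta^{\ex(T_{2j})}$ over the upper string, and the $\beta$-times-second sum is exactly $\sum_j x^{\wt(T_{2j+1})}\beta^{\ex(T_{2j+1})}$ over the lower string, since each of the latter has $\ex = \ex(T_0) + (\text{index}) + 1$, accounting for the extra factor of $\beta$. This establishes the first identity.

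For the second identity I would apply $\pi_i^{(\beta)}$ termwise to $\sum_{j=0}^{2k} x^{\wt(T_j)}\beta^{\ex(T_j)}$, grouping the $T_j$ into the upper and lower $i$-strings. Each ordinary $i$-string is $f_i$-closed and $\pi_i$ is idempotent; more to the point, applying the first identity of this lemma to each \emph{sub}-double-string rooted at each $T_j$ and then using that the collection of these is just the original double string again shows idempotency. The cleanest route: $\pi_i^{(\beta)}$ is idempotent as an operator (it satisfies $\pi_i^{(\beta)}\pi_i^{(\beta)} = \pi_i^{(\beta)}$, being a Demazure-type operator), and $\sum_{j=0}^{2k} x^{\wt(T_j)}\beta^{\ex(T_j)} = \pi_i^{(\beta)}(x^{\wt(T_0)}\beta^{\ex(T_0)})$ by the first part, so the second part is immediate from idempotency. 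The main obstacle I anticipate is purely bookkeeping: getting the index ranges and the $\beta$-exponents to line up exactly between the algebraic expansion of $\pi_i^{(\beta)}$ and the combinatorial enumeration of the two $i$-strings, in particular the edge case $k = 0$ where the lower string is empty, and confirming that $\ex$ is genuinely constant on each ordinary $i$-string (which follows since $f_i$ preserves both shape and $|\wt|$, as $f_i = f_i' \circ f_i'$ changes a $``)''$ then a $``)-("$ back, netting one $i \mapsto i+1$ swap with no change in $|\wt|$).
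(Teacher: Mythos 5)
Your overall strategy --- split the double $i$-string into its two ordinary $i$-strings, expand $\pib_i$ explicitly on a monomial, and match terms --- is essentially the paper's proof: the paper writes $\pib_i(f)=\pi_i(f+\beta x_{i+1}f)$, identifies $\beta x_{i+1}x^{\wt(T_0)}\beta^{\ex(T_0)}$ with $x^{\wt(T_1)}\beta^{\ex(T_1)}$, and applies Lemma~\ref{L: Sum over an i-string} to the two strings with sources $T_0$ and $T_1$; for the second identity it uses that the sum is $s_i$-symmetric and that $\pib_i$ fixes symmetric polynomials, which is the same content as your idempotency argument. The genuine problem is your bookkeeping of $\ex$, which is exactly the step the term-matching rests on. From $\wt(T_{2j})=\wt(T_{2j-1})-v_i$ you get $|\wt(T_{2j})|=|\wt(T_{2j-1})|-1$, not $|\wt(T_{2j})|=|\wt(T_{2j-1})|$; since the shape is constant along the string, the correct conclusion is $\ex(T_{2j})=\ex(T_0)$ and $\ex(T_{2j+1})=\ex(T_0)+1$ for \emph{all} $j$ (i.e.\ $\ex$ is constant on each of the two ordinary $i$-strings, as you yourself correctly note in your final sentence), not $\ex(T_{2j})=\ex(T_0)+j$ and $\ex(T_{2j+1})=\ex(T_0)+j+1$ as you assert and then use. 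With your stated values the lower-string term $x^{\wt(T_{2j+1})}\beta^{\ex(T_{2j+1})}$ would carry $\beta^{\ex(T_0)+j+1}$, whereas your expansion of $\pib_i$ produces only a single extra factor of $\beta$ for every such term, so the identification would fail for all $j\ge 1$. The argument is repaired simply by replacing the linear formula with the constant one, but as written the two halves of your proof contradict each other.

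A smaller slip: $\pib_i$ is not $\pi_i+\beta\,\partial_i(x_{i+1}\,\cdot\,)$. From $\pib_i(f)=\pab_i(x_if)=\partial_i\bigl(x_i f+\beta x_i x_{i+1}f\bigr)$ one gets $\pib_i=\pi_i+\beta\,\pi_i(x_{i+1}\,\cdot\,)$, and the quantity you actually write down, $\sum_{j}x_i^{a-j}x_{i+1}^{b+1+j}$, is $\pi_i(x_i^ax_{i+1}^{b+1})$ rather than $\partial_i(x_i^ax_{i+1}^{b+1})$; your final displayed formula for $\pib_i(x_i^ax_{i+1}^b)$ is therefore correct even though the intermediate identity is not. (Also note that the source of an $i$-string satisfies $\varphi_i=k$, $\varepsilon_i=0$, so by K2 the exponent of $x_i$ in $x^{\wt(T_0)}$ is the \emph{larger} one; the sums should run to $a-b$, not $b-a$ --- a sign confusion you inherited from a typo in the paper's proof of Lemma~\ref{L: Sum over an i-string}.)
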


\begin{proof}
First, we establish the first equation 
using the argument in~\cite{MPS}.
Notice that 
$$
\pib_i(f) = \pi_i(f + \beta x_{i+1}f).
$$
Thus, its left hand side becomes
$$
\pi_i(x^{\wt(T_0)}\beta^{\ex(T_0)} 
+ \beta x_{i+1}x^{\wt(T_0)}\beta^{\ex(T_0)}).
$$
Notice that $x^\wt(T_1) = x^\wt(T_0) x_{i+1}$
and $\ex(T_1) = \ex(T_0) + 1$.
We can further simplify the left hand side into
\begin{align*}
 & \pi_i(x^{\wt(T_0)}\beta^{\ex(T_0)} 
+ x^{\wt(T_1)}\beta^{\ex(T_1)})\\
= & \pi_i(x^{\wt(T_0)}\beta^{\ex(T_0)}) 
+ \pi_i(x^{\wt(T_1)}\beta^{\ex(T_1)}).  
\end{align*}
Then the first equation is established by 
Lemma ~\ref{L: Sum over an i-string}.

For the second equation, notice that
$\sum_{j=0}^{2k} x^{\wt(T_j)} \beta^{\ex(T_j)}$
is symmetric in $x_i$ and $x_{i+1}$.
Then the equation is established by the fact: 
$\pib_i(f) = f$ if $s_i(f) = f$.

\end{proof}

Next, we describe $\SVT(\alpha)$ in terms of double $i$-strings.
\begin{lemma}
Take any weak composition $\alpha$.
For each double $i$-string $T_0, \dots, T_{2k}$,
if $T_i \in \SVT(\alpha)$ with $i > 0$,
then $T_0, \dots, T_{2k} \in \SVT(\alpha)$.
\end{lemma}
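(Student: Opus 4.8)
The plan is to leverage Lemma~\ref{L: double i-string and right key} together with Corollary~\ref{C: Must be source} to show that membership in $\SVT(\alpha)$ propagates along an entire double $i$-string, given that it holds for one non-source element. Recall that $\SVT(\alpha)$ consists of all SVT $T$ with $K_+(T) \leq \key(\alpha)$, equivalently (by the remark after the definition of $\SVT(\alpha)$) all $T$ with $\cS(T) \subseteq \SSYT(\alpha)$. The key reduction is that, by Lemma~\ref{L: double i-string and right key}, the right keys $K_+(T_1), \dots, K_+(T_{2k})$ are all equal: they are simultaneously $\key(\alpha')$ for a common weak composition $\alpha'$ which is either $K_+(T_0)$ or $s_i K_+(T_0)$. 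So once we know one $T_j$ with $j > 0$ lies in $\SVT(\alpha)$, we immediately get $T_1, \dots, T_{2k} \in \SVT(\alpha)$, and only the source $T_0$ needs separate treatment.

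First I would set $\beta' $ to be the weak composition with $K_+(T_0) = \key(\beta')$ (renaming to avoid clash with the indeterminate $\beta$; in the writeup I will just call it $\gamma$ and write $K_+(T_0) = \key(\gamma)$). By Lemma~\ref{L: double i-string and right key} we have $\gamma_i \geq \gamma_{i+1}$ and $K_+(T_1) = \dots = K_+(T_{2k})$ equals either $\key(\gamma)$ or $\key(s_i\gamma)$. Since $T_j \in \SVT(\alpha)$ for some $j \in \{1, \dots, 2k\}$, this common right key is $\leq \key(\alpha)$, hence $K_+(T_1), \dots, K_+(T_{2k}) \leq \key(\alpha)$, i.e.\ $T_1, \dots, T_{2k} \in \SVT(\alpha)$. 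It remains only to show $T_0 \in \SVT(\alpha)$, i.e.\ $K_+(T_0) = \key(\gamma) \leq \key(\alpha)$.

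For the source, I would argue as follows. We know $K_+(T_1) \leq \key(\alpha)$, and $K_+(T_1)$ is $\key(\gamma)$ or $\key(s_i\gamma)$. In the first case we are immediately done since $K_+(T_0) = \key(\gamma) = K_+(T_1) \leq \key(\alpha)$. In the second case, $\key(s_i\gamma) \leq \key(\alpha)$. Since $\gamma_i \geq \gamma_{i+1}$, the key $\key(s_i\gamma)$ is obtained from $\key(\gamma)$ by the crystal-type move that lowers some entries $i+1$ to $i$ in certain columns; concretely, entrywise $\key(\gamma)$ and $\key(s_i\gamma)$ differ only in that wherever $\key(s_i\gamma)$ has an $i$ coming from a column that had $i+1$ in $\key(\gamma)$, the value went down. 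I would make this precise by comparing columns: a column of $\key(\gamma)$ is an interval-free set determined by $\gamma$, and swapping $\gamma_i, \gamma_{i+1}$ changes a column containing $i+1$ but not $i$ into one containing $i$ but not $i+1$ (all other columns unchanged). Hence $\key(\gamma) \geq \key(s_i\gamma)$ is \emph{not} what we want directly — rather I need $\key(\gamma) \leq \key(\alpha)$. The clean way is to observe that since $\gamma_i \ge \gamma_{i+1}$, we have $\key(s_i\gamma) = \pi_i$-type image and in fact $\key(\gamma)$ is obtained from $\key(s_i\gamma)$ by raising $i$'s to $i+1$'s; so entrywise $\key(\gamma) \ge \key(s_i \gamma)$, which goes the wrong direction. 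So in this sub-case I instead use Corollary~\ref{C: Must be source}: since $K_+(T_0) \ne K_+(T_1) = K_+(f_i'(f_i'(T_0)))$... — here I would instead directly appeal to the structure established in the proof of Lemma~\ref{L: double i-string and right key}, where $\key(\gamma)$ arises precisely as $\emptyset\star u_1\, i\, u_2$ on the columns where $\key(s_i\gamma)$ is $\emptyset \star u_1\,(i+1)\,u_2$; comparing these via $\star$ shows $\emptyset\star u_1\,i\,u_2 \le \emptyset\star u_1\,(i+1)\,u_2$ entrywise in each such column, and they agree on all other columns, so $\key(\gamma) \le \key(s_i\gamma) \le \key(\alpha)$.

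The main obstacle I anticipate is precisely this last comparison for the source: verifying that $\key(\gamma) \le \key(s_i\gamma)$ entrywise when $\gamma_i \ge \gamma_{i+1}$, phrased in terms of the $\star$ action so that it dovetails with the column-by-column analysis in the proof of Lemma~\ref{L: double i-string and right key}. Concretely I need: if $S$ is a column of $\key(\gamma)$ with $i \in S$, $i+1 \notin S$, and $S'$ the corresponding column of $\key(s_i\gamma)$ with $i \notin S'$, $i+1 \in S'$, then $S \le S'$ entrywise (obvious: replacing $i$ by $i+1$ in a set only increases entries); and columns not of this type are literally unchanged. Once that is in hand, $K_+(T_0) = \key(\gamma) \le \key(s_i\gamma) = K_+(T_1) \le \key(\alpha)$ finishes the source case, and the non-source cases are immediate from the common-right-key statement of Lemma~\ref{L: double i-string and right key}. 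I would then conclude $T_0, \dots, T_{2k} \in \SVT(\alpha)$.
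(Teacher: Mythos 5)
Your proof is correct and follows essentially the same route as the paper: the non-source elements $T_1,\dots,T_{2k}$ share a common right key by Lemma~\ref{L: double i-string and right key}, and the source satisfies $K_+(T_0)\le K_+(T_1)$ because $\key(\gamma)\le\key(s_i\gamma)$ entrywise when $\gamma_i\ge\gamma_{i+1}$. The only difference is that you spell out this last entrywise comparison (after some back-and-forth about its direction), whereas the paper asserts $K_+(T_0)\le K_+(T_i)$ directly as a consequence of that lemma.
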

\begin{proof}
We know $K_+(T_i) \leq \key(\alpha)$.
Since $T_1, \dots, T_{2k}$ all have the same right key,
they are all in $\SVT(\alpha)$.
By Lemma ~\ref{L: double i-string and right key},
$K_+(T_0) \leq K_+(T_i)$, so $T_0 \in \SVT(\alpha)$.
\end{proof}

The following is analogous to ~\cite[Proposition ~3.3.5]{K}.
\begin{cor}
\label{C: All, none, or source}
Take any weak composition $\alpha$.
For each double $i$-string $S = \{T_0, \dots, T_{2k}\}$,
then $\SVT(\alpha) \cap S$ is $S$, $\emptyset$, 
or $\{T_0\}$.
\end{cor}

\begin{lemma}
\label{L: action s_i on crystal1}
Let $\alpha$ be a weak composition such that $\alpha_i > \alpha_{i+1}$.
We can decompose $\SVT(s_i \alpha)$ into a disjoint union of double $i$-strings. 
For each of the double $i$-string in $\SVT(s_i \alpha)$, 
$\SVT(\alpha)$ either contains its source or all of it. 
\end{lemma}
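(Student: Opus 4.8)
The plan is to prove Lemma~\ref{L: action s_i on crystal1} by combining the structural results already established about double $i$-strings with the decomposition of $\B_n$ into such strings. First I would observe that, since $e_i'$ and $f_i'$ partition $\B_n$ into double $i$-strings, the set $\SVT(s_i\alpha)$ is a union of those double $i$-strings that meet it; so the real content is (a) that $\SVT(s_i\alpha)$ is a \emph{disjoint union of entire} double $i$-strings, and (b) that each such string either has its source in $\SVT(\alpha)$ or lies entirely in $\SVT(\alpha)$.

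For part (a), I would take a double $i$-string $S=\{T_0,\dots,T_{2k}\}$ with $S\cap\SVT(s_i\alpha)\neq\emptyset$ and show $S\subseteq\SVT(s_i\alpha)$. This is essentially Corollary~\ref{C: All, none, or source} applied to $s_i\alpha$: the intersection is $S$, $\emptyset$, or $\{T_0\}$. To rule out the $\{T_0\}$ case I would use the hypothesis $\alpha_i>\alpha_{i+1}$, i.e. $s_i\alpha$ has $(s_i\alpha)_i<(s_i\alpha)_{i+1}$. If only $T_0\in\SVT(s_i\alpha)$, then $K_+(T_0)\le\key(s_i\alpha)$; but $T_0$ is the source of its double $i$-string, so by Lemma~\ref{L: double i-string and right key} applied to $\alpha':=$ the composition with $\key(\alpha')=K_+(T_0)$, we have $\alpha'_i\ge\alpha'_{i+1}$, and then $K_+(T_1)$ is $\key(\alpha')$ or $\key(s_i\alpha')$. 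Since $K_+(T_0)=\key(\alpha')\le\key(s_i\alpha)$ and $\alpha'_i\ge\alpha'_{i+1}$ while $(s_i\alpha)_i<(s_i\alpha)_{i+1}$, one checks entrywise that the $i$-th column data forces $K_+(T_1)=\key(s_i\alpha')$ to still be $\le\key(s_i\alpha)$ (swapping a weakly-larger $i$-th entry past a weakly-smaller $(i+1)$-th entry keeps the key below a key whose $(i+1)$-th entry already dominates). Hence $T_1\in\SVT(s_i\alpha)$ too, contradicting the $\{T_0\}$ case. So $S\subseteq\SVT(s_i\alpha)$, proving (a).

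For part (b), fix a double $i$-string $S=\{T_0,\dots,T_{2k}\}\subseteq\SVT(s_i\alpha)$ and apply Corollary~\ref{C: All, none, or source} with the composition $\alpha$: the set $\SVT(\alpha)\cap S$ is $S$, $\emptyset$, or $\{T_0\}$. So it remains only to rule out $\SVT(\alpha)\cap S=\emptyset$, i.e. to show $T_0\in\SVT(\alpha)$. Write $K_+(T_0)=\key(\alpha')$. By Lemma~\ref{L: double i-string and right key}, $\alpha'_i\ge\alpha'_{i+1}$ and $K_+(T_j)\in\{\key(\alpha'),\key(s_i\alpha')\}$ for $j\ge 1$; in either case $K_+(T_j)\le\key(s_i\alpha)$ since $S\subseteq\SVT(s_i\alpha)$. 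Because $\alpha'_i\ge\alpha'_{i+1}$, the key $\key(\alpha')$ is obtained from $K_+(T_1)$ either by equality or by swapping columns $i,i+1$ into weakly-decreasing order; sorting the $i$-th and $(i+1)$-th entries of a weak composition into decreasing order can only decrease the resulting key entrywise relative to the original, and $\key(s_i\alpha)$ has $(s_i\alpha)_i<(s_i\alpha)_{i+1}$, so $\key(\alpha')\le\key(s_i\alpha)$ entrywise actually gives the stronger bound $\key(\alpha')\le\key(\alpha)$ (the decreasing arrangement $\alpha'$ is bounded by the decreasing rearrangement of $\alpha$). Therefore $T_0\in\SVT(\alpha)$, so $\SVT(\alpha)\cap S$ is $\{T_0\}$ or $S$, which is exactly the claim.

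The main obstacle I anticipate is the purely combinatorial bookkeeping in the two entrywise key comparisons: translating ``$K_+(T_0)=\key(\alpha')$ with $\alpha'_i\ge\alpha'_{i+1}$, one of $K_+(T_j)$'s is $\le\key(s_i\alpha)$'' into ``$\key(\alpha')\le\key(\alpha)$'' requires carefully using how $\key(\cdot)$ behaves under swapping two adjacent entries — specifically that a key is monotone in the sorted-decreasing order of the composition, and that $s_i\alpha$ with $(s_i\alpha)_i<(s_i\alpha)_{i+1}$ forces the comparison to ``jump'' from being against $\key(s_i\alpha)$ to being against $\key(\alpha)$. I would isolate this as a small standalone sublemma about keys of weak compositions before assembling the argument, so that the crystal-theoretic part stays clean and rests only on Lemma~\ref{L: double i-string and right key} and Corollary~\ref{C: All, none, or source}.
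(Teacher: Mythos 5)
Your proposal is correct and follows essentially the same route as the paper: apply Corollary~\ref{C: All, none, or source} to both $\SVT(s_i\alpha)$ and $\SVT(\alpha)$, use Lemma~\ref{L: double i-string and right key} to identify the possible right keys along the string, and then invoke the two entrywise key comparisons ($\key(\gamma)\le\key(s_i\alpha)$ with $\gamma_i\ge\gamma_{i+1}$ forces both $\key(s_i\gamma)\le\key(s_i\alpha)$ and $\key(\gamma)\le\key(\alpha)$). The paper asserts these comparisons just as tersely as you do, so your flagged sublemma is the only place where either write-up would need fleshing out.
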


\begin{exa}
\label{E: 3 double 2-strings}
When $\alpha = (1,2,0)$, 
the set $\SVT(s_2 \alpha)$ is a disjoint union of 
three double $2$-strings. 
Besides the double 2-string 
in Example~\ref{E: Double 2-string},
it also contains
\[
\begin{tikzcd}[ampersand replacement=\&]
\begin{ytableau}
1 & 12\cr
2 
\end{ytableau}\arrow[r, "2"]  \arrow[d,dashrightarrow,"2"]\& 
\begin{ytableau}
1 & 13\cr
2 
\end{ytableau} \arrow[r, "2"]\arrow[d,dashrightarrow,"2"]\& 
\begin{ytableau}
1 & 13\cr
3 
\end{ytableau}
\\
\begin{ytableau}
1 & 123\cr
2 
\end{ytableau} \arrow[ur,dashrightarrow,"2"] \arrow[r, "2"] \&
\begin{ytableau}
1 & 13\cr
23 
\end{ytableau}\arrow[ur,dashrightarrow,"2"]
\end{tikzcd}
\]
and 
\[
\begin{tikzcd}[ampersand replacement=\&]
\begin{ytableau}
1 & 1\cr
2 
\end{ytableau}\arrow[r, "2"]  \arrow[d,dashrightarrow,"2"]\& 
\begin{ytableau}
1 & 1\cr
3 
\end{ytableau} \:.
\\
\begin{ytableau}
1 & 1\cr
23 
\end{ytableau} \arrow[ur,dashrightarrow,"2"] 
\end{tikzcd}
\]

For each of these three double 2-strings,
the set $\SVT(\alpha)$ only contains 
the source. 

\end{exa}

\begin{proof}
Let $T_0, \dots, T_{2k}$ be a double $i$-string
that intersects with $\SVT(s_i\alpha)$.
Corollary ~\ref{C: All, none, or source} implies
$T_0 \in \SVT(s_i\alpha)$.
Let $\gamma = wt(K_+(T_0))$,
then $\key(\gamma) \leq \key(s_i\alpha)$.
We know each SVT in this double $i$-string
has right key $\key(\gamma)$ or $\key(s_i(\gamma))$.
Since $\alpha_i > \alpha_{i+1}$,
$\key(s_i\gamma) \leq \key(s_i\alpha)$.
Thus, the whole double $i$-string
is in $\SVT(s_i \alpha)$.

Lemma ~\ref{L: double i-string and right key} 
implies that $\gamma_i \geq \gamma_{i+1}$,
so $\key(\gamma) \leq \key(\alpha)$.
We have $T_0 \in \SVT(\alpha)$.
By Corollary ~\ref{C: All, none, or source}, 
$\SVT(\alpha)$ either contains $T_0$
or the whole double $i$-string. 
\end{proof}

Now we are ready to prove our main result:
\begin{proof}[Proof of Theorem ~\ref{T: Main}]
We only need to check $\sum_{T \in \SVT(\alpha)} x^{\wt(T)} \beta^{\ex(T)}$
satisfies the recursive definition of $\fLb_\alpha$.
In other words,
we need to prove:
\begin{enumerate}
\item[$\bullet$] 
If $\alpha$ is a partition, then
$$
\sum_{T \in \SVT(\alpha)} x^{\wt(T)} \beta^{\ex(T)} = x^\alpha.
$$ 
\item[$\bullet$] If $\alpha_i > \alpha_{i+1}$, then
\begin{equation}
\label{E: Main2}
 \pib_i (\sum_{T \in \SVT(\alpha)} x^{\wt(T)} \beta^{\ex(T)})
= \sum_{T \in \SVT(s_i\alpha)} x^{\wt(T)} \beta^{\ex(T)}
\end{equation}
\end{enumerate}
The first statement is immediate.
For the second one, 
we break $\SVT(\alpha)$ into $A \sqcup B$.
The set $A$ consists of all $T$ whose whole double $i$-string 
is in $\SVT(\alpha)$.
The set $B$ contains all $T \in \SVT(\alpha)$ such that 
part of its double $i$-string is not in $\SVT(\alpha)$.
Let $\overline{B}$ be the union of double $i$-strings
who intersect with $B$.
By Lemma ~\ref{L: action s_i on crystal1}, 
elements in $B$ are sources of double $i$-string and 
$\SVT(s_i \alpha) = A \sqcup \overline{B}$.
Now by Lemma ~\ref{L: Sum over a double i-string},
$$
\pib_i(\sum_{T \in A} x^{\wt(T)} \beta^{\ex(T)}) 
=  \sum_{T \in A} x^{\wt(T)} \beta^{\ex(T)},
$$
$$
\pib_i(\sum_{T \in B} x^{\wt(T)} \beta^{\ex(T)}) 
=  \sum_{T \in \overline{B}} x^{\wt(T)} \beta^{\ex(T)}.
$$

Equation ~\ref{E: Main2} is obtained
by summing up the two equations above. 

\end{proof}

\section{K-theory crystal}
\label{S: K-crystal}
In this section, 
we describe some similarities between our
abstract Kashiwara crystal and the Demazure crystal.
Then we explain why our crystal can be viewed 
as an answer to~\cite[Open Problem 7.1]{MPS}.

Similar to the $\F_i$ defined in \S\ref{S: Background},
we define $\F_i' S$ as 
$\{ (f_i')^j (T): T \in S, j \geq 0\} 
- \{ \emt\}$,
where $S \subseteq \B_n$.
Then we have an analogue of 
Theorem~\ref{T: obtain SSYT by f_i}:
\begin{thm}
\label{T: obtain SVT by f_i'}
Let $\alpha$ be a weak composition
such that $\alpha^+ = \lambda$
and $\alpha_i = 0$ for $i > n$.
We can write $\alpha$ as 
$s_{i_1}\dots s_{i_k} \lambda$,
where $k$ is minimal.
Then we have
\begin{equation*}
\SVT(\alpha)
= \F_{i_1}' \dots \F_{i_k}' \{ u_\lambda\}.
\end{equation*}
Here, $u_\lambda$ is the SSYT with shape $\lambda$
such that its $r^{th}$ row only has $r$.
\end{thm}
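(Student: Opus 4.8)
The plan is to mirror the proof of Theorem~\ref{T: obtain SSYT by f_i} by setting up an induction on $k$, the length of a reduced word for $\alpha$ relative to $\lambda$ in the $S_n$-action. The base case $k=0$ means $\alpha = \lambda$, and then $\SVT(\lambda)$ consists of exactly those SVT $T$ with $K_+(T) \le \key(\lambda)$; since $\key(\lambda)$ is the unique key of its shape with all entries as small as possible, this forces $T = u_\lambda$, and $\F$ of nothing is just $\{u_\lambda\}$. For the inductive step, write $\alpha = s_{i_1}\cdots s_{i_k}\lambda$ reduced, set $\beta = s_{i_2}\cdots s_{i_k}\lambda$ so that $\alpha = s_{i_1}\beta$ with $s_{i_1}\beta$ longer than $\beta$, i.e. $\beta_{i_1} > \beta_{i_1+1}$. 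By the inductive hypothesis $\SVT(\beta) = \F_{i_2}'\cdots\F_{i_k}'\{u_\lambda\}$, so it suffices to prove
\begin{equation*}
\SVT(s_{i_1}\beta) = \F_{i_1}'\bigl(\SVT(\beta)\bigr).
\end{equation*}

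The heart of the argument is this last equality, and it is essentially a repackaging of Lemma~\ref{L: action s_i on crystal1} together with Corollary~\ref{C: All, none, or source}. I would first decompose $\SVT(s_{i_1}\beta)$ into a disjoint union of double $i_1$-strings, which is possible by Lemma~\ref{L: action s_i on crystal1}. For each such double $i_1$-string $T_0,\dots,T_{2m}$ appearing in $\SVT(s_{i_1}\beta)$, Lemma~\ref{L: action s_i on crystal1} tells us that $\SVT(\beta)$ contains either just the source $T_0$ or the whole string. In the first case, applying $\F_{i_1}'$ (i.e. closing under $f_{i_1}'$) to $\{T_0\}$ produces exactly $\{T_0,\dots,T_{2m}\}$, since $f_{i_1}'(T_j) = T_{j+1}$ along the string and $f_{i_1}'(T_{2m}) = \emt$. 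In the second case, $\F_{i_1}'$ applied to $\{T_0,\dots,T_{2m}\}$ again gives back $\{T_0,\dots,T_{2m}\}$ because $f_{i_1}'$ never exits a double $i_1$-string. Either way the image under $\F_{i_1}'$ of $\SVT(\beta)$ restricted to this string is the full string, so taking the union over all double $i_1$-strings in $\SVT(s_{i_1}\beta)$ yields $\F_{i_1}'(\SVT(\beta)) \supseteq \SVT(s_{i_1}\beta)$.

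For the reverse inclusion I need to check that $\F_{i_1}'$ does not create anything outside $\SVT(s_{i_1}\beta)$. Take $T \in \SVT(\beta)$; then its whole double $i_1$-string lies in $\SVT(s_{i_1}\beta)$ by Lemma~\ref{L: action s_i on crystal1} (using $\beta_{i_1} > \beta_{i_1+1}$, so $\key(s_{i_1}\gamma) \le \key(s_{i_1}\beta)$ whenever $\key(\gamma) \le \key(\beta)$, exactly as in the proof of that lemma). Since every element of the form $(f_{i_1}')^j(T)$ that is not $\emt$ lies on $T$'s double $i_1$-string, it lies in $\SVT(s_{i_1}\beta)$. This gives $\F_{i_1}'(\SVT(\beta)) \subseteq \SVT(s_{i_1}\beta)$, completing the step. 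I also should verify that one can always pass from a reduced word $s_{i_1}\cdots s_{i_k}$ for $\alpha$ to a reduced word for $\beta = s_{i_2}\cdots s_{i_k}\lambda$ with the length dropping by exactly one and with $\beta_{i_1} > \beta_{i_1+1}$; this is the standard fact that in the $S_n$-orbit of a partition the length function counts inversions, so the deletion condition is automatic.

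The main obstacle is making sure the bookkeeping of which double $i_1$-strings meet $\SVT(s_{i_1}\beta)$ versus $\SVT(\beta)$ is airtight — in particular that the two possibilities in Lemma~\ref{L: action s_i on crystal1} (\emph{source only} versus \emph{all of it}) are exactly the two possibilities one needs, and that $\F_{i_1}'$ behaves as a ``string-saturation'' operator with no leakage. Once Lemma~\ref{L: action s_i on crystal1} and Corollary~\ref{C: All, none, or source} are in hand, though, this is a clean induction with no further computation, entirely parallel to the classical Demazure-crystal statement in Theorem~\ref{T: obtain SSYT by f_i}.
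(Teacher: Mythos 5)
Your proposal is correct and takes essentially the same route as the paper: induction on $k$, with the inductive step reduced to the identity $\SVT(s_i\alpha) = \F_i'\,\SVT(\alpha)$ for $\alpha_i > \alpha_{i+1}$, and both inclusions deduced from Lemma~\ref{L: action s_i on crystal1} (source-or-all on each double $i$-string). The only caveat is notational: you reuse $\beta$ for a weak composition, which collides with the paper's K-theory parameter, so rename it before writing this up.
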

\begin{proof}
Prove by induction on $k$.
The base case is when $k = 0$ and $\alpha = \lambda$.
The equation becomes $\SVT(\alpha) = \{ u_\lambda\}$,
which is immediate.

The inductive step is to prove the following.
\begin{equation}
\label{E: apply f_i' on a set}
\SVT(s_i \alpha) 
= \F_i' \: \SVT(\alpha),
\end{equation}
where $\alpha_i > \alpha_{i+1}$.
We prove each side of 
the equation contains the other side. 
\begin{enumerate}
\item[$\bullet$] 
Take $T \in \SVT(\alpha)$.
By Lemma ~\ref{L: action s_i on crystal1},
the double $i$-string of $T$ is completely in $\SVT(s_i\alpha)$.
Thus, $(f_i')^j (T)$ is in $\SVT(s_i\alpha)$ if it is not $\emt$.
\item[$\bullet$] 
Suppose $T \in \SVT(s_i \alpha)$.
By Lemma ~\ref{L: action s_i on crystal1},
the source of its double $i$-string is in $\SVT(\alpha)$.
Thus, $T$ is in the right hand side.
\end{enumerate}
\end{proof}

If we slightly rephrase this theorem,
we get the following statements, 
which correspond to axioms K1 and K2 in section 7 of ~\cite{MPS}.
\begin{cor}
Let $\alpha$ be a weak composition
such that $\alpha^+ = \lambda$
and $\alpha_i = 0$ for $i > n$.
\begin{enumerate}
\item For each $T \in SVT(\alpha)$,
we can obtain $T$ by applying $f_i'$ on $u_\lambda$.
\item 
If $\alpha = s_{i_1}\dots s_{i_k} \lambda$
where $k$ is minimized,
then 
$$
\SVT(\alpha) \sqcup \{ \emt\} 
= \{ f_{i_1}'^{j_1} \dots f_{i_k}'^{j_k} u_\lambda: j_1, \dots, j_k \geq 0\}
$$
\end{enumerate}
\end{cor}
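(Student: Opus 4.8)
The plan is to read both statements off Theorem~\ref{T: obtain SVT by f_i'} by unwinding the definition of the closure operator $\F_i'$. Recall that $\F_i' S = \{(f_i')^j(T) : T \in S,\ j \geq 0\}\setminus\{\emt\}$ and that $f_i'(\emt)=\emt$; hence $\F_{i_1}'\cdots\F_{i_k}'\{u_\lambda\}$ is precisely the set of non-$\emt$ tableaux of the form $(f_{i_1}')^{j_1}\cdots(f_{i_k}')^{j_k}(u_\lambda)$ with $j_1,\dots,j_k\ge 0$, the convention $f_i'(\emt)=\emt$ guaranteeing that intermediate $\emt$'s cause no ambiguity in this expression.

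First I would fix a reduced expression $\alpha = s_{i_1}\cdots s_{i_k}\lambda$ with $k$ minimal; such an expression exists because $\alpha^+=\lambda$ together with $\alpha_i=0$ for $i>n$ forces $\alpha$ to lie in the $S_n$-orbit of $\lambda$. Theorem~\ref{T: obtain SVT by f_i'} then gives $\SVT(\alpha)=\F_{i_1}'\cdots\F_{i_k}'\{u_\lambda\}$, so by the previous paragraph
\[
\SVT(\alpha) = \{(f_{i_1}')^{j_1}\cdots(f_{i_k}')^{j_k}(u_\lambda) : j_1,\dots,j_k\ge 0\}\setminus\{\emt\}.
\]
Part (1) is the weaker assertion that every $T\in\SVT(\alpha)$ is obtained from $u_\lambda$ by some sequence of $f_i'$'s, which is immediate from this equality (and holds for any choice of reduced word, not just a minimal one).

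For part (2) it remains only to observe that $\emt$ itself lies in the set on the right-hand side. Since there are only finitely many SVT of shape $\lambda$ with entries in $[n]$, and the crystal $\B_n$ is seminormal by Lemma~\ref{L: Kashiwara on SVT}, iterating $f_{i_1}'$ from $u_\lambda$ must eventually return $\emt$; thus $\emt=(f_{i_1}')^{j_1}(u_\lambda)$ for $j_1$ large and all other exponents zero. Adjoining $\emt$ to both sides of the displayed equality gives $\SVT(\alpha)\sqcup\{\emt\}=\{f_{i_1}'^{j_1}\cdots f_{i_k}'^{j_k}u_\lambda : j_1,\dots,j_k\ge 0\}$, as desired.

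The only step needing care is this last bookkeeping: matching the left-to-right order of the nested $\F'$-closures with the order of the exponents, and confirming that $\emt$ is genuinely attained. Beyond that the corollary is a direct rephrasing of Theorem~\ref{T: obtain SVT by f_i'}, so I do not anticipate a real obstacle.
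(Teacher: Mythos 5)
Your proposal is correct and takes essentially the same route as the paper, which offers no separate argument and presents the corollary as a direct rephrasing of Theorem~\ref{T: obtain SVT by f_i'}; your unwinding of the nested $\F'$-closures into the set $\{f_{i_1}'^{j_1}\cdots f_{i_k}'^{j_k}u_\lambda\}$ is exactly the intended reading. One small point: the reason iterating $f_{i_1}'$ must eventually reach $\emt$ is not seminormality (which concerns $f_i$ and $e_i$) but the finiteness and non-cycling of double $i$-strings --- by Lemma~\ref{L: Double i string}, $f_i'$ strictly increases the number of $(i+1)$'s every two steps, so the finitely many SVT cannot be revisited.
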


The third axiom in ~\cite{MPS} corresponds to our main result. 
Thus, we claim our construction is an answer to ~\cite[Open Problem 7.1]{MPS}
in the context of abstract Kashiwara crystals.

\section{Lascoux atoms}
\label{S: Atom}
In this section, 
we extend our rule to another set of polynomials
called Lascoux atoms. 
\begin{defn}
Following~\cite{Las01},
define the operator $\opib_i$ 
on $\mathbb{Z}[\beta][x_1, x_2, \dots]$ by 
$$
\opib_i(f) := \pib_i(f) - f.
$$
\end{defn}

\begin{defn}
Let $\alpha$ be a weak composition.
Similar to Lascoux polynomials,
a {\em Lascoux atom} $\ofLb_\alpha$
is defined as
$$
\ofLb_\alpha = \begin{cases}
x^\alpha & \text{if $\alpha$ is a partition} \\
\opib_i \ofLb_{s_i\alpha} &\text{if $\alpha_i<\alpha_{i+1}$.}
\end{cases}
$$
\end{defn}

The relationship between Lascoux polynomials
and Lascoux atoms can be described as follows.
\begin{defn}
Let $\alpha$ be a weak composition.
Following~\cite{Mon},
define $w(\alpha)$ as the shortest permutation
such that $\alpha = w(\alpha) \alpha^+$.
\end{defn}

\begin{lemma} [{\cite[Theorem 5.1]{Mon}}]
\label{L: Sum of atoms}
Let $\alpha$ be a weak composition.
Then 
$$
\fLb_\alpha = \sum_{\gamma} \ofLb_\gamma,
$$
where the sum is over all weak composition
$\gamma$ such that $\gamma^+ = \alpha^+$
and $w(\gamma) \leq w(\alpha)$ 
in Bruhat order.
\end{lemma}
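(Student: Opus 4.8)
Since the statement is attributed to Monical, one may simply cite ~\cite{Mon}; but here is a self-contained plan. The plan is to induct on $\ell(w(\alpha))$, the Coxeter length of $w(\alpha)$ (equivalently the minimal $k$ with $\alpha = s_{i_1}\cdots s_{i_k}\alpha^+$). The base case $\ell(w(\alpha)) = 0$ means $\alpha = \alpha^+$ is a partition: the only $\gamma$ with $\gamma^+ = \alpha^+$ and $w(\gamma) \le \mathrm{id}$ is $\gamma = \alpha$, so both sides equal $\ofLb_\alpha = x^\alpha = \fLb_\alpha$.

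For the inductive step, fix $i$ with $\alpha_i < \alpha_{i+1}$, so $\fLb_\alpha = \pib_i\fLb_{s_i\alpha}$ and (a standard fact about keys) $w(s_i\alpha) = s_i w(\alpha)$ with $\ell(w(s_i\alpha)) = \ell(w(\alpha)) - 1$; in particular $s_i$ is a left descent of $w(\alpha)$. Applying the inductive hypothesis to $\fLb_{s_i\alpha}$ and then the linear operator $\pib_i = \mathrm{id} + \opib_i$ termwise,
\[
\fLb_\alpha = \sum_{w(\delta) \le w(s_i\alpha)} \ofLb_\delta \;+\; \sum_{w(\delta) \le w(s_i\alpha)} \opib_i\ofLb_\delta .
\]
The first ingredient I would establish is the action of $\opib_i$ on atoms. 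Using the recursion $\ofLb_\gamma = \opib_i\ofLb_{s_i\gamma}$ (for $\gamma_i < \gamma_{i+1}$), the idempotency $\pib_i^2 = \pib_i$ (hence $\opib_i^2 = -\opib_i$), and the symmetry of $\ofLb_\delta$ in $x_i,x_{i+1}$ whenever $\delta_i = \delta_{i+1}$, one obtains
\[
\opib_i\ofLb_\delta =
\begin{cases}
\ofLb_{s_i\delta}, & \delta_i > \delta_{i+1},\\
-\ofLb_\delta, & \delta_i < \delta_{i+1},\\
0, & \delta_i = \delta_{i+1}.
\end{cases}
\]
Substituting this in, the summands with $\delta_i < \delta_{i+1}$ contribute $\ofLb_\delta - \ofLb_\delta$ and cancel, leaving
\[
\fLb_\alpha = \sum_{\substack{w(\delta)\le w(s_i\alpha)\\ \delta_i \ge \delta_{i+1}}} \ofLb_\delta \;+\; \sum_{\substack{w(\delta)\le w(s_i\alpha)\\ \delta_i > \delta_{i+1}}} \ofLb_{s_i\delta}.
\]

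The remaining work is purely order-theoretic: one checks that the combined index set on the right is exactly $\{\eta : \eta^+ = \alpha^+,\ w(\eta) \le w(\alpha)\}$, each $\eta$ appearing once. Here $\delta_i \ge \delta_{i+1}$ translates to ``$s_i$ is not a left descent of $w(\delta)$'', and for $\delta_i > \delta_{i+1}$ one has $w(s_i\delta) = s_i w(\delta)$ of length $\ell(w(\delta)) + 1$; so after reindexing the second sum by $\eta = s_i\delta$ the claim becomes
\[
\{u \le w(\alpha)\} = \{u : s_i u > u,\ u \le s_i w(\alpha)\} \ \sqcup\ \{u : s_i u < u,\ s_i u \le s_i w(\alpha)\},
\]
which is precisely the lifting property of the Bruhat order for the left descent $s_i$ of $w(\alpha)$ (the two pieces being disjoint by the condition on $s_i u$ versus $u$); one also checks this restricts cleanly to the set of minimal coset representatives $\{w(\eta)\}$. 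Feeding this back in yields $\fLb_\alpha = \sum_{w(\eta) \le w(\alpha)} \ofLb_\eta$, completing the induction.

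The main obstacle is the bookkeeping around the three-case evaluation of $\opib_i$ on atoms — in particular establishing that $\ofLb_\delta$ is symmetric in $x_i, x_{i+1}$ when $\delta_i = \delta_{i+1}$ (a small auxiliary induction that needs care when the indices swapped by the recursion overlap with $\{i,i+1\}$, and is exactly what guarantees those terms neither vanish spuriously nor survive as unwanted summands) — together with carefully aligning those cases with the left-descent partition of the Bruhat interval. A shorter alternative, available once the set-valued tableau rule for atoms proved in this section is in hand — namely $\ofLb_\gamma = \sum_{T : K_+(T) = \key(\gamma)} x^{\wt(T)}\beta^{\ex(T)}$ — is to partition $\SVT(\alpha)$ by the value of $K_+(T)$ and combine Theorem~\ref{T: Main} with the classical equivalence ``$\key(\gamma) \le \key(\alpha)$ entrywise $\iff w(\gamma)\le w(\alpha)$ in Bruhat order''; this route is non-circular provided that tableau rule is derived from the crystal rather than from the present lemma.
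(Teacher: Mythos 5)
The paper offers no proof of this lemma at all --- it is stated with a citation to \cite[Theorem~5.1]{Mon} and used as a black box --- so your opening remark that one may simply cite \cite{Mon} already matches the paper exactly. Your self-contained plan is therefore extra content rather than an alternative to anything in the paper, and in outline it is sound: the induction on $\ell(w(\alpha))$, the three-case evaluation of $\opib_i$ on atoms (via $\pib_i^2=\pib_i$, hence $\opib_i^2=-\opib_i$), and the reduction of the index-set identity to the lifting property of Bruhat order for the left descent $s_i$ of $w(\alpha)$ is the standard argument, and I verified that the cancellation and the reindexing $\eta=s_i\delta$ do produce exactly $\{\eta:\eta^+=\alpha^+,\ w(\eta)\le w(\alpha)\}$, each term once (the case $\eta_i=\eta_{i+1}$ lands correctly in the first sum because $w(\eta)$ is a minimal coset representative, so $s_iw(\eta)>w(\eta)$ there too). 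The two ingredients you flag as needing care are genuinely the only unproven steps --- in particular the symmetry of $\ofLb_\delta$ in $x_i,x_{i+1}$ when $\delta_i=\delta_{i+1}$, which is what kills the middle case --- and both are standard facts about the operators $\pib_i,\opib_i$. One caution on your ``shorter alternative'': within this paper that route would be circular, since the corollary in \S\ref{S: Atom} giving $\ofLb_\gamma=\sum_{K_+(T)=\key(\gamma)}x^{\wt(T)}\beta^{\ex(T)}$ is itself deduced from the present lemma together with Theorem~\ref{T: Main}; your hedge (``non-circular provided that tableau rule is derived from the crystal'') is exactly the right caveat, but that independent derivation is not carried out here.
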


It is well-known that the condition 
on $\gamma$ is the previous lemma
can be phrased as a condition 
on $\key(\gamma)$.
\begin{lemma}[{\cite[Equation (2.13)]{LS1}}]
\label{L: Bruhat order}
Let $\alpha$ and $\gamma$ be weak compositions
such that $\gamma^+ = \alpha^+$.
The following are equivalent.
\begin{itemize}
\item $w(\gamma) \leq w(\alpha)$ in Bruhat order.
\item $\key(\gamma) \leq \key(\alpha)$ entry-wise. 
\end{itemize}
\end{lemma}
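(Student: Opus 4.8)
The plan is to reformulate both conditions as statements about the permutations $w(\alpha)$ and $w(\gamma)$ and then invoke the tableau (Ehresmann) criterion for the Bruhat order. Write $\lambda \df \alpha^+ = \gamma^+$, let $\lambda'$ be its conjugate, and recall that $w(\alpha)$ is characterized among permutations by $\alpha_{w(\alpha)(j)} = \lambda_j$ for all $j$ together with minimality of length. The first point is a description of the columns of a key: using the bijection $\key(\cdot)\leftrightarrow\wt(\cdot)$ and the nesting property forced by the key condition, one checks that in $\key(\alpha)$ each value $i$ occupies an initial run of columns of length $\alpha_i$, so that column $c$ of $\key(\alpha)$ is the set $\{\,i:\alpha_i\ge c\,\}$ written in increasing order. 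Since $\lambda$ is weakly decreasing, $\alpha_i\ge c$ is equivalent to $w(\alpha)^{-1}(i)\le\lambda'_c$, whence
\[
\text{column $c$ of }\key(\alpha)\;=\;\bigl(w(\alpha)(1),\dots,w(\alpha)(\lambda'_c)\bigr)\ \text{arranged increasingly},
\]
and likewise for $\gamma$. Thus $\key(\gamma)\le\key(\alpha)$ entrywise if and only if, for every column index $c$, the sorted length-$\lambda'_c$ initial segment of $w(\gamma)$ is componentwise $\le$ that of $w(\alpha)$.

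Next I would identify which initial segments are being compared. As $c$ runs over $1,2,\dots$, the lengths $\lambda'_c$ take exactly the values $m_1,\ m_1+m_2,\dots$, the partial sums of the composition $\mu$ of multiplicities of the distinct parts of $\lambda$; these are precisely the block boundaries of the Young subgroup $S_\mu=\mathrm{Stab}(\lambda)$. By minimality of length, $w(\alpha)$ and $w(\gamma)$ are the minimal-length representatives of their $S_\mu$-cosets, hence are increasing on each $\mu$-block. By Ehresmann's criterion, $w(\gamma)\le w(\alpha)$ iff the sorted length-$m$ initial segment of $w(\gamma)$ is $\le$ that of $w(\alpha)$ for \emph{every} $m$, so it remains to show that for block-increasing permutations this is implied by the inequalities at $m\in\{\lambda'_c\}$. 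For this, put Ehresmann's criterion in rank-function form, $u\le v$ iff $\#\{\,i\le a:u(i)\le b\,\}\ge\#\{\,i\le a:v(i)\le b\,\}$ for all $a,b$: if $u$ is increasing on a block $(p_{t-1},p_t]$ then, for $p_{t-1}\le a\le p_t$, $\#\{\,i\le a:u(i)\le b\,\}=\min\bigl(\#\{\,i\le p_{t-1}:u(i)\le b\,\}+(a-p_{t-1}),\,\#\{\,i\le p_t:u(i)\le b\,\}\bigr)$, and the same for $v$; since $\min$ is monotone, the inequalities at $a=p_{t-1}$ and $a=p_t$ force the one at every interior $a$. Propagating over all blocks and combining with the first paragraph gives $\key(\gamma)\le\key(\alpha)\iff w(\gamma)\le w(\alpha)$. (Alternatively this last reduction can be quoted from the standard theory of parabolic quotients of Coxeter groups.)

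I expect the main obstacle to be bookkeeping rather than depth: keeping the conjugation $\lambda\leftrightarrow\lambda'$ straight, matching the jump set $\{\lambda'_c\}$ with the block structure of $\mathrm{Stab}(\lambda)$ so that ``increasing on $\mu$-blocks'' coincides exactly with the minimality defining $w(\alpha)$ and $w(\gamma)$, and tracking that the \emph{smaller} key corresponds to the \emph{smaller} permutation even though the rank-function inequality reverses the comparison of sorted tuples. The parabolic-reduction step is the technical heart, but once the rank-function form of the Bruhat order is in hand it is a one-line monotonicity calculation.
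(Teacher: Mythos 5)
The paper does not actually prove this lemma: it is quoted verbatim from Lascoux--Sch\"utzenberger as \cite[Equation (2.13)]{LS1}, so there is no internal argument to compare against. Your proposal supplies a correct, self-contained proof, and the route is sound. The two pillars are exactly right: (i) column $c$ of $\key(\alpha)$ is $\{i:\alpha_i\ge c\}$, which via $\alpha_i=\lambda_{w(\alpha)^{-1}(i)}$ is the increasing rearrangement of $w(\alpha)(1),\dots,w(\alpha)(\lambda'_c)$, so the entrywise key comparison is Ehresmann's tableau criterion restricted to the lengths $m\in\{\lambda'_c\}$; and (ii) since $w(\alpha)$ and $w(\gamma)$ are minimal-length representatives of their $\mathrm{Stab}(\lambda)$-cosets, they are increasing on the $\mu$-blocks, and your $\min$-interpolation of the rank function $\#\{i\le a:u(i)\le b\}$ correctly propagates the inequalities from the block boundaries (which are precisely the values $\lambda'_c$, together with the trivial endpoints $a=0$ and $a=N$ that handle the trailing-zeros block) to all $a$. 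This is the standard fact that Bruhat order on minimal coset representatives of $W/W_J$ is detected by the $J$-restricted tableau criterion, and quoting it from the theory of parabolic quotients, as you suggest in your parenthetical, would be an acceptable shortcut. What your approach buys over the paper's bare citation is a transparent reduction to two classical facts (Ehresmann's criterion and the block-increasing property of minimal coset representatives), at the cost of the bookkeeping you correctly identify; the one point worth stating explicitly in a polished write-up is that both keys have the same shape $\lambda$ (since $\gamma^+=\alpha^+$), so the entrywise comparison of columns is well posed.
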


Finally, we are ready to extend our rule
to Lascoux atoms. 
\begin{defn}
Let $\overline{\SVT}(\alpha)$
be the set of all SVT $T$ such that
$K_+(T) = \key(\alpha)$.
\end{defn}

\begin{rem}
By Lemma~\ref{L: Bruhat order},
we have 
$$
\SVT(\alpha) = \bigsqcup_{\gamma} \overline{\SVT}(\gamma),
$$
where $\gamma$ is any weak composition
such that $\gamma^+ = \alpha^+$ 
and $w(\gamma) \leq w(\alpha)$.
\end{rem}

\begin{cor}
We have
$$
\ofLb_\alpha = \sum_{T \in \overline{\SVT}(\alpha)}
x^{\wt(T)}\beta^{\ex(T)}.
$$
\end{cor}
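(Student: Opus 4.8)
The statement to prove is the corollary expressing the Lascoux atom $\ofLb_\alpha$ as a sum over $\overline{\SVT}(\alpha)$. The natural approach is to combine the just-proved main theorem (Theorem~\ref{T: Main}) with the two structural lemmas recalled immediately before the corollary: Monical's decomposition of $\fLb_\alpha$ into Lascoux atoms (Lemma~\ref{L: Sum of atoms}) and the translation of the Bruhat-order condition on weak compositions into an entry-wise comparison of keys (Lemma~\ref{L: Bruhat order}).

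The plan is to proceed by a downward induction (or, equivalently, a Möbius-type inversion) on the Bruhat order. First I would observe the set-partition identity already noted in the Remark: for any $\alpha$,
\[
\SVT(\alpha) = \bigsqcup_{\gamma} \overline{\SVT}(\gamma),
\]
where $\gamma$ ranges over weak compositions with $\gamma^+ = \alpha^+$ and $w(\gamma) \leq w(\alpha)$; this follows because $K_+(T) = \key(\gamma)$ for a unique such $\gamma$ whenever $\key(\gamma) \le \key(\alpha)$, using Lemma~\ref{L: Bruhat order} to match the index sets. Summing the monomial weight $x^{\wt(T)}\beta^{\ex(T)}$ over this disjoint union and applying Theorem~\ref{T: Main} gives
\[
\fLb_\alpha = \sum_{\gamma} \Bigl( \sum_{T \in \overline{\SVT}(\gamma)} x^{\wt(T)}\beta^{\ex(T)} \Bigr),
\]
with the same range of $\gamma$. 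Comparing this with Lemma~\ref{L: Sum of atoms}, both sides are sums over the same index set of terms indexed by $\gamma$; so if I write $\overline{L}_\gamma := \sum_{T \in \overline{\SVT}(\gamma)} x^{\wt(T)}\beta^{\ex(T)}$, then $\sum_\gamma \ofLb_\gamma = \sum_\gamma \overline{L}_\gamma$ for every $\alpha$, where the sums are over all $\gamma$ with $\gamma^+ = \alpha^+$ and $w(\gamma) \le w(\alpha)$.

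To conclude $\ofLb_\alpha = \overline{L}_\alpha$ I would induct on $\ell(w(\alpha))$. The base case is $w(\alpha) = \id$, i.e. $\alpha$ a partition: then $\SVT(\alpha) = \overline{\SVT}(\alpha) = \{u_\lambda\}$ (the only $T$ with $K_+(T) = \key(\alpha)$ being $u_\lambda$, since $\key(\alpha)$ itself is a partition-shaped key with strictly increasing columns forcing singletons), so $\overline{L}_\alpha = x^\alpha = \ofLb_\alpha$. For the inductive step, fix $\alpha$ and assume $\ofLb_\gamma = \overline{L}_\gamma$ for all $\gamma$ with $w(\gamma) < w(\alpha)$ (and $\gamma^+ = \alpha^+$); subtracting these equal terms from the two displayed sums over $\{\gamma : w(\gamma) \le w(\alpha)\}$ leaves exactly $\ofLb_\alpha = \overline{L}_\alpha$.

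I do not anticipate a serious obstacle here: the corollary is essentially a formal consequence of Theorem~\ref{T: Main} together with the cited results of Monical and Lascoux--Schützenberger. The one point deserving care is the set-partition identity and the bijection between the index set $\{\gamma : \gamma^+ = \alpha^+,\ w(\gamma) \le w(\alpha)\}$ and the values taken by $K_+$ on $\SVT(\alpha)$ — one must check that $K_+(T)$ is always a key of the form $\key(\gamma)$ with $\gamma^+ = \alpha^+$ (this uses that $K_+(T)$ and $\key(\alpha)$ have the same shape $\alpha^+$, which holds since $K_+(T) = T_{\max}$ has the shape of $T$) and that every such $\gamma$ with $\key(\gamma) \le \key(\alpha)$ is actually attained. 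The attainment is guaranteed by Theorem~\ref{T: obtain SVT by f_i'} applied to $\gamma$, or more directly by noting $\key(\gamma) \in \overline{\SVT}(\gamma)$ itself since $K_+(\key(\gamma)) = \key(\gamma)$. Everything else is bookkeeping with finite sums.
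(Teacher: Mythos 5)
Your proposal is correct and follows essentially the same route as the paper: induction on the Bruhat order, combining Theorem~\ref{T: Main}, Lemma~\ref{L: Sum of atoms}, and the disjoint-union decomposition $\SVT(\alpha) = \bigsqcup_\gamma \overline{\SVT}(\gamma)$ from the Remark (via Lemma~\ref{L: Bruhat order}), then cancelling the terms supplied by the inductive hypothesis. The extra care you take with the base case and with verifying that every admissible $\gamma$ is attained is sound but not needed beyond what the paper states.
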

\begin{proof}
Prove by induction on the Bruhat order.
If $\alpha$ is a partition,
then our result is immediate.

Now assume this rule holds for
all $\gamma$ such that 
$\gamma^+ = \alpha^+$ 
and $w(\gamma) < w(\alpha)$.
By Lemma~\ref{L: Sum of atoms},
we have
$$
\ofLb_\alpha = \fLb_\alpha 
- \sum_{\gamma}\ofLb_\gamma ,
$$
where the sum is over all $\gamma \neq \alpha$
such that $\gamma^+ = \alpha^+$ 
and $w(\gamma) \leq w(\alpha)$.
By our main result and the inductive hypothesis,
we have
$$
\ofLb_\alpha = \sum_{T \in \SVT(\alpha)}x^{\wt(T)}\beta^{\ex(T)} 
- \sum_{\gamma}\sum_{T \in \overline{\SVT}(\gamma)}
x^{\wt(T)}\beta^{\ex(T)}.
$$
Then the inductive step is finished 
by the remark above.
\end{proof}

\section{Future directions}
In this section, we introduce a few 
problems related to our main result.

\subsection{Finding a bijective proof 
of Theorem ~\ref{T: Main}}

As mentioned in \S\ref{S: Introduction},
there exist various combinatorial formulas
for Lascoux polynomials.
We would like to describe one of them.

A {\em reverse semistandard 
Young tableau} (RSSYT)
is a filling of a Young diagram with $\mathbb{Z}_{>0}$,
such that each column is strictly decreasing
and each row is weakly decreasing.
A {\em reverse set-valued tableau} (RSVT)
is a filling of Young is a filling of a Young diagram with non-empty subsets of $\mathbb{Z}_{>0}$,
such that no matter how we pick we number 
in each entry,
the resulting tableau is a RSSYT.
We may define $\wt(\cdot)$ and $\ex(\cdot)$
for RSVT analogously.
We also define a map $L(\cdot)$ from RSVT
to RSSYT.
$L(\cdot)$ picks the largest number in each 
entry.

A {\em reverse key} is a RSSYT,
where each number in column $j$ is also
in column $j-1$.
Clearly, reverse keys are in bijection
with weak compositions. 
We let $\key^R$ be the map that sends
a weak composition to its corresponding
reverse key.
Each RSSYT $T$ is associated with a reverse key 
called its left key, denoted by $K_-(T)$.

There is a weight-preserving map from SSYT
to RSSYT called reverse complement ~\cite{LS1}.
It is anti-rectification, followed by $180^o$
rotation.
Moreover, if $T$ is a SSYT 
with right key $\key(\alpha)$,
then the left key of $T$'s image is  $\key^R(\alpha)$.
Under this bijection,
we may transform the SSYT rule for Demazure character
~(\ref{E: SSYT rule of Demazure})
into a RSSYT rule:

\begin{equation}
\label{E: RSSYT rule of Demazure}
\kappa_{\alpha}= 
\sum_{K_-(T) \leq \key^R(\alpha)} x^{\wt(T)},
\end{equation}
where $T$ is a RSSYT.

This rule is generalized to Lascoux polynomials
by ~\cite{BSW, SY}:
\begin{equation}
\label{E: RSVT rule of Lascoux}
\fLb_{\alpha}= 
\sum_{K_-(L(T)) \leq \key^R(\alpha)} 
x^{\wt(T)} \beta^{\ex(T)},
\end{equation}
where $T$ is a RSVT.

One can prove Theorem ~\ref{T: Main} by 
building an appropriate bijection between
the SVT appeared in ~(\ref{E: Main})
and the RSVT in ~(\ref{E: RSVT rule of Lascoux}). 
More explicitly, 
we may describe the problem as follows.
\begin{prob}
Find a map $\Phi$ that sends a SSYT to a RSSYT,
satisfying:
\begin{enumerate}
\item $\Phi$ preserves $\wt(\cdot)$ and $\ex(\cdot)$.
\item If $T$ is a RSVT with $K_+(T) = \key(\alpha)$,
then $L(\Phi(T))$ has left key $\key^R(\alpha)$.
\end{enumerate}
\end{prob}

\subsection{Tableau complexes}

As mentioned in \S\ref{S: Introduction}, 
we can view $\SVT(\alpha)$ 
as a sub-complex of the Young tableau complex.
Knutson, Miller and Yong ~\cite{KMY} showed
that the Young tableau complex is homeomorphic to 
a ball or a sphere.

\begin{prob}
Determine whether the sub-complex $\SVT(\alpha)$
is also homeomorphic to a ball or a sphere.
\end{prob}

\section{Acknowledgments}
We are especially grateful to 
Travis Scrimshaw for carefully reading an earlier
version of this paper 
and giving many useful comments.
We also thank two anonymous referees for
thorough reports. 
We are grateful to Jianping Pan, Brendon Rhoades, Mark Shimozono, and Alexander Yong for helpful conversations. 

\bibliographystyle{alpha}
\bibliography{output.tex}

\newcommand{\etalchar}[1]{$^{#1}$}
\begin{thebibliography}{BKS{\etalchar{+}}08}

\bibitem[BKS{\etalchar{+}}08]{BKSTY}
Anders~Skovsted Buch, Andrew Kresch, Mark Shimozono, Harry Tamvakis, and
  Alexander Yong.
\newblock {S}table {G}rothendieck polynomials and {K}-theoretic factor
  sequences.
\newblock {\em Mathematische Annalen}, 340(2):359--382, 2008.

\bibitem[BS17]{BS}
Daniel Bump and Anne Schilling.
\newblock {\em Crystal bases: representations and combinatorics}.
\newblock World Scientific Publishing Company, 2017.

\bibitem[BSW20]{BSW}
Valentin Buciumas, Travis Scrimshaw, and Katherine Weber.
\newblock {C}olored five-vertex models and {L}ascoux polynomials and atoms.
\newblock {\em Journal of the London Mathematical Society}, 102(3):1047--1066,
  2020.

\bibitem[Dem74]{De}
Michel Demazure.
\newblock {U}ne nouvelle formule des caracteres.
\newblock {\em Bull. Sci. Math}, 2(98):163--172, 1974.

\bibitem[Ful97]{F}
William Fulton.
\newblock {\em {Y}oung tableaux: with applications to representation theory and
  geometry}.
\newblock Number~35. Cambridge University Press, 1997.

\bibitem[Kas90]{Kas90}
Masaki Kashiwara.
\newblock Crystalizing theq-analogue of universal enveloping algebras.
\newblock {\em Communications in Mathematical Physics}, 133(2):249--260, 1990.

\bibitem[Kas91]{Kas91}
Masaki Kashiwara.
\newblock On crystal bases of the q-analogue of universal enveloping algebras.
\newblock {\em Duke Mathematical Journal}, 63(2):465--516, 1991.

\bibitem[Kas93]{K}
Masaki Kashiwara.
\newblock {T}he crystal base and {L}ittelmann’s refined {D}emazure character
  formula.
\newblock {\em Duke Mathematical Journal}, 71(3):839--858, 1993.

\bibitem[KMY08]{KMY}
Allen Knutson, Ezra Miller, and Alexander Yong.
\newblock {T}ableau complexes.
\newblock {\em Israel Journal of Mathematics}, 163(1):317--343, 2008.

\bibitem[Las01]{Las01}
Alain Lascoux.
\newblock Transition on grothendieck polynomials.
\newblock In {\em Physics and combinatorics}, pages 164--179. World Scientific,
  2001.

\bibitem[Las04]{Las}
Alain Lascoux.
\newblock {S}chubert \& {G}rothendieck: un bilan bid{\'e}cennal.
\newblock {\em S{\'e}minaire Lotharingien de Combinatoire}, 50:B50i, 2004.

\bibitem[LS82]{LS82}
Alain Lascoux and Marcel-Paul Sch{\"u}tzenberger.
\newblock Structure de hopf de l’anneau de cohomologie et de l’anneau de
  grothendieck d’une vari{\'e}t{\'e} de drapeaux.
\newblock {\em CR Acad. Sci. Paris S{\'e}r. I Math}, 295(11):629--633, 1982.

\bibitem[LS89]{LS2}
Alain Lascoux and Marcel-Paul Sch{\"u}tzenberger.
\newblock {T}ableaux and noncommutative {S}chubert polynomials.
\newblock {\em Functional Analysis and Its Applications}, 23(3):223--225, 1989.

\bibitem[LS90]{LS1}
Alain Lascoux and Marcel-Paul Sch{\"u}tzenberger.
\newblock {K}eys \& standard bases.
\newblock {\em Institute for Mathematics and Its Applications}, 19:125--144,
  1990.

\bibitem[Mon16]{Mon}
Cara Monical.
\newblock {S}et-valued skyline fillings.
\newblock {\em arXiv preprint arXiv:1611.08777}, 2016.

\bibitem[MPS20]{MPS}
Cara Monical, Oliver Pechenik, and Travis Scrimshaw.
\newblock {C}rystal structures for symmetric {G}rothendieck polynomials.
\newblock {\em Transformation Groups}, pages 1--51, 2020.

\bibitem[PS19]{PS}
Oliver Pechenik and Travis Scrimshaw.
\newblock {K}-theoretic crystals for set-valued tableaux of rectangular shapes.
\newblock {\em arXiv preprint arXiv:1904.09674}, 2019.

\bibitem[RS95]{RS}
Victor Reiner and Mark Shimozono.
\newblock {K}ey polynomials and a flagged {L}ittlewood—{R}ichardson rule.
\newblock {\em Journal of Combinatorial Theory, Series A}, 70(1):107--143,
  1995.

\bibitem[RY13]{RossY}
Colleen Ross and Alexander Yong.
\newblock {C}ombinatorial rules for three bases of polynomials.
\newblock {\em arXiv preprint arXiv:1302.0214}, 2013.

\bibitem[RY21]{RY}
Victor Reiner and Alexander Yong.
\newblock {T}he ``{G}rothendieck to {L}ascoux" conjecture.
\newblock {\em arXiv preprint arXiv:2102.12399}, 2021.

\bibitem[SB02]{B:Gr}
Anders Skovsted~Buch.
\newblock {A} {L}ittlewood-{R}ichardson rule for the {K}-theory of
  {G}rassmannians.
\newblock {\em Acta Mathematica}, pages 37--78, 2002.

\bibitem[SY21]{SY}
Mark Shimozono and Tianyi Yu.
\newblock {G}rothendieck to {L}ascoux expansions.
\newblock {\em arXiv preprint arXiv:2106.13922}, 2021.

\bibitem[Wil13]{W}
Matthew~J Willis.
\newblock {A} direct way to find the right key of a semistandard {Y}oung
  tableau.
\newblock {\em Annals of Combinatorics}, 17(2):393--400, 2013.

\end{thebibliography}

\end{document}